\theoremstyle{plain}
\newtheorem{Thm}{Theorem}[section]
\newtheorem{Cor}[Thm]{Corollary}
\newenvironment{nThm}[1]
  {\innernumthm}
  {\endinnernumthm}
\newenvironment{nCor}[1]
  {\innernumcor}
  {\endinnernumcor}
\newtheorem{Lem}[Thm]{Lemma}
\newtheorem{Prop}[Thm]{Proposition}
\theoremstyle{definition}
\newtheorem{Def}[Thm]{Definition}
\declaretheorem[style=definition ,qed=$\triangle$,sibling=Thm, name=Example]{Ex}
\newtheorem{remark}[Thm]{Remark}
\theoremstyle{remark}
\numberwithin{equation}{section}
\newcommand{\td}{\mathrm{d}}
\newcommand{\slr}{\mathrm{SL}_2(\mathbb{R})}
\DeclareMathOperator{\sgn}{sgn}
\setlist[enumerate]{label=\arabic*.}
\date{\today}
\title{Properties and Transformations \\of Weingarten Surfaces}
\author[1,*]{Brendan Guilfoyle}
\author[1,2,$\dagger$]{Morgan Robson}
\affil[1]{School of STEM\\
          Munster Technological University\\
          Tralee\\
          Co. Kerry\\
          Ireland.}
\affil[2]{Department of Computing and Mathematics\\
          South East Technological University, Waterford\\
          Ireland.}
\affil[*]{brendan.guilfoyle@mtu.ie}
\affil[$\dagger$]{Corresponding Author: morgan.robson@setu.ie}
\begin{document}
\maketitle
\begin{abstract}
This paper explores Weingarten relations satisfied by surfaces of revolution in Euclidean 3-space $\mathbb{E}^3$. Firstly, we establish that the local geometry of a surface around umbilic points restricts its possible Weingarten relations. We demonstrate that the rate at which the surface becomes spherical at umbilic points imposes bounds on the slope of any satisfied Weingarten relation, extending previous research by a number of authors.

Secondly, we investigate transformations between Weingarten relations through the action of $\slr$, acting as fractional linear transformations on the surface's curvatures. We integrate this action, which splits into three natural geometric actions on surfaces in $\mathbb{E}^3$, providing a method of generating rotationally symmetric solutions to a transformed Weingarten relation. This technique is applied to a class of Weingarten relations known as semi-quadratic. We prove the action is transitive on such relations and give a classification result on their solutions.



\end{abstract} 
\let\thefootnote\relax\footnote{
{\it 2010 Mathematics Subject Classification: 53A05, 53C42}\\
{\bf Keywords:} Weingarten surface, rotational symmetry, curvature, group action
}

\section{Introduction
} \label{sec: Introduction}

Introduced by J. Weingarten in 1861 \cite{w61}, Weingarten surfaces are a topic of classical differential geometry and have found applications in architectural design \cite{p.et21a, p.et21b, t.et19}. An oriented surface in Euclidean 3-space $\mathbb{E}^3$ is \textit{Weingarten} when its principal curvatures $k_1$ and $k_2$ satisfy a differentiable functional relationship expressed as
\begin{equation}\label{eq: intro weingarten relation}
W(k_1, k_2) = 0. 
\end{equation}
 $W$ is called the Weingarten relationship and is a non-linear second-order PDE satisfied by the surface. Recent work has focused on understanding how Weingarten relations determine geometric properties of their rotationally symmetric solutions \cite{ci22, fm22}. In conjunction with this theme, this paper investigates the possible Weingarten relations for surfaces of revolution. This is done through two approaches. Firstly, obstruction criteria for Weingarten relations are given in terms of local surface geometry around umbilic points. Secondly, given an initial surface of revolution and its Weingarten relation, a family surface transformations are applied, generating solutions to transformed Weingarten relationships. A classification result of certain Weingarten surfaces is then given. Further details are now given.
 
\subsubsection*{Obstructions to Weingarten Relations for Surfaces of Revolution}

Our first topic explores how the geometric behaviour of a surface near its umbilic points affects the supported Weingarten relations. This is done in terms of a surface's \textit{curvature diagram}, denoted as $\mathfrak{F}(\mathcal{S})$, which represents the set of curvatures $(k_1, k_2)$ attained by points on the surface $\mathcal{S}$ as a subset of the $k_1k_2$-plane. In the literature, $k_1$ and $k_2$ are typically labelled by the condition $k_2 \leq k_1$. A surface $\mathcal{S}$ is Weingarten with relation (\ref{eq: intro weingarten relation}) if and only if $\mathfrak{F}(\mathcal{S})\subseteq W^{-1}\{0\}$, hence $\mathfrak{F}(\mathcal{S})$ strongly determines the supported Weingarten relationships. Examples are depicted in Figure \ref{fig: curv space}. Points of $\mathcal{S}$ with equal principal curvatures are called \textit{umbilic points} and the diagonal $k_1 = k_2$ in the $k_1k_2$-plane is called the \textit{umbilic axis}. The study of umbilic points is a classical yet still active area of research \cite{g24, gk23, go23}. They are guaranteed to exist on closed $C^2$ surfaces of zero genus, and thus the curvature diagram of such surfaces must intersect the umbilic axis.
\begin{figure}[h!]
    \centering
    \begin{subfigure}{0.45\textwidth}
\begin{tikzpicture}[scale=1]
\draw [->] (-.5,0) -- (4,0);
\draw (4.2,0) node {$k_1$};
\draw [->] (0,-.5) -- (0,4);
\draw (0,4.2) node {$k_2$};
\draw (0.25,-0.25) node {O};
\draw [dashed,->] (-0.2,-0.2) -- (4,4);

\begin{scope}[xshift=25pt,yshift=25pt]
\coordinate (N) at (1-0.3,1-0.3);
\coordinate (O) at (1.8-0.3,1.5-0.3);
\coordinate (O2) at (2.4-0.3,2.1-0.8);
\coordinate (P) at (3-0.3,2.2-0.8);
\coordinate (P2) at (2.99-0.3,1.7-0.8);
\coordinate (Q) at (3.1-0.3,1.2-0.8);
\coordinate (Q2) at (2-0.3,1.05-0.8);
\coordinate (S) at (1.4-0.3,1.1-0.3);
\draw[fill=blue!20] plot [smooth] coordinates {(N) (O) (O2) (P) (P2) (Q) (S) (N)};
\draw (2.5-0.3,1.62-0.7) node {$\mathfrak{F}(\mathcal{S})$};
\draw[decoration={
            text along path,
            text={Umbilic Axis},
            text align={center},
            raise=0.2cm},decorate] (0,0) -- (3,3);
            
\end{scope}
            
\begin{scope}[xshift=10pt,yshift=10pt]
\draw[line width= 0.4mm,color=red,scale=1,domain=0:1.2,smooth,variable=\t]
  plot ({2.5+\t},{2.5+\t^3-\t^2+0.4*\t});
 \end{scope}
 
\begin{scope}[xshift=10pt,yshift=10pt]
\draw[line width= 0.4mm,color=orange,scale=1,domain=0.6:1.1,smooth,variable=\t]
  plot ({\t},{2*0.6-\t});
\end{scope}
\end{tikzpicture}
    \end{subfigure}
    \begin{subfigure}{0.45\textwidth}
    \centering 
    \begin{tikzpicture}[scale=1]
\draw [->] (-.5,0) -- (4,0);
\draw (4.2,0) node {$k_1$};
\draw [->] (0,-.5) -- (0,4);
\draw (0,4.2) node {$k_2$};
\draw (0.25,-0.25) node {O};
\draw [dashed,->] (-0.2,-0.2) -- (4,4);

\begin{scope}[xshift=25pt,yshift=25pt]
\coordinate (N) at (1-0.3,1-0.3);
\coordinate (O) at (1.8-0.3,1.5-0.3);
\coordinate (O2) at (2.4-0.3,2.1-0.8);
\coordinate (P) at (3-0.3,2.2-0.8);
\coordinate (P2) at (2.99-0.3,1.7-0.8);
\coordinate (Q) at (3.1-0.3,1.2-0.8);
\coordinate (Q2) at (2-0.3,1.05-0.8);
\coordinate (S) at (1.4-0.3,1.1-0.3);
\draw[fill=blue!20] plot [smooth] coordinates {(N) (O) (O2) (P) (P2) (Q) (S) (N)};
\draw (2.5-0.3,1.62-0.7) node {$\mathfrak{F}(\mathcal{S})$};
\draw[decoration={
            text along path,
            text={Umbilic Axis},
            text align={center},
            raise=0.2cm},decorate] (0,0) -- (3,3);
            
\draw [dashed,pattern=north west lines] (1-0.3,0.5-0.3) -- (1-0.3,1-0.3) -- (1.5-0.3,1-0.3);
\end{scope}
            
\begin{scope}[xshift=10pt,yshift=10pt]
\draw[line width= 0.4mm,color=red,scale=1,domain=0:1.2,smooth,variable=\t]
  plot ({2.5+\t},{2.5+\t^3-\t^2+0.4*\t});
\draw [dashed,pattern=north west lines] (2.5,2) -- (2.5,2.5) -- (3,2.5);
 \end{scope}
 
\begin{scope}[xshift=10pt,yshift=10pt]
\draw[line width= 0.4mm,color=orange,scale=1,domain=0.6:1.1,smooth,variable=\t]
  plot ({\t},{2*0.6-\t});
\draw [dashed,pattern=north west lines] (0.6,0.1) -- (0.6,0.6) -- (1.1,0.6);
\end{scope}
\end{tikzpicture}
    \end{subfigure}
    \caption{Left: 
    Pairs $(k_1,k_2)$ satisfying a CMC relationship $k_1+k_2=c$ (orange) and the curvature diagrams of a generic surface (blue) and a generic Weingarten surface (red). Right: The directions of negative slope from the umbilic axis.}
\label{fig: curv space}
\end{figure}
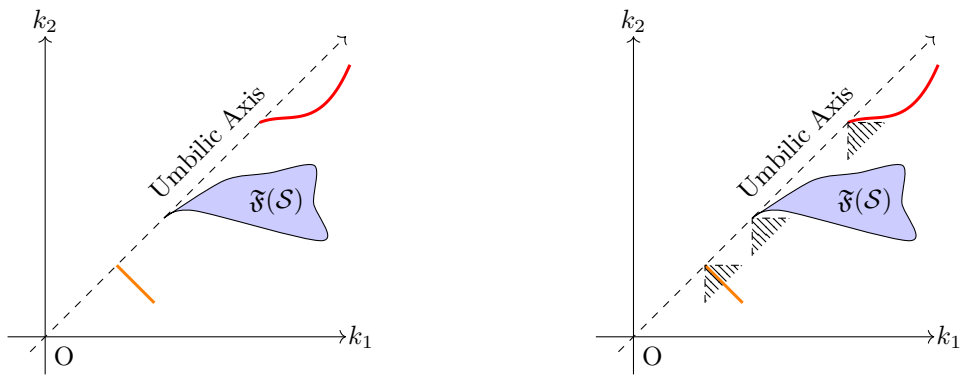
\\
Various authors have described the possible shapes of curvature diagrams, near the umbilic axis \cite{et99, Galvez22,hw54,Hopf1989a}. Different assumptions on the surface $\mathcal{S}$ are made, see \cite{et99}, Lemma 2 or \cite{Galvez22}, Theorem 1.1 for examples, however the general conclusion is that if $\mathfrak{F}(\mathcal{S})$ intersects the umbilic axis, it does so with a non-negative slope in the $k_1k_2$-plane, or, is a point on the umbilic axis (and thus $\mathcal{S}$ is congruent to a subset of the round sphere or a plane).
Directions of negative slope are shaded in Figure \ref{fig: curv space}. Due to the above, the orange curve in Figure \ref{fig: curv space} cannot be the curvature diagram of a surface, as it meets the umbilic axis from a direction of negative slope. Thus surfaces congruent to round spheres are the only surface homeomorphic to $S^2$ which can satisfy the relationship $k_1+k_2=c$, $c>0$, as is well known \cite{a62}.\\ \hfill \\
In this paper we give strictly positive lower bounds on the slope at which $\mathfrak{F}(\mathcal{S})$ intersects the umbilic axis for $\mathcal{S}$ a $C^2$-smooth surface of revolution. Throughout the rest of the paper, $\mathcal{S}$ will denote such a surface, in which case $\mathfrak{F}(\mathcal{S})$ is generically a curve (being the continuous image of the profile curve of $\mathcal{S}$). 
\begin{itemize}
    \item If $p\in\mathcal{S}$ is an umbilic point, the {\it umbilic slope at} $p$, denoted $\mu_p$, is the slope at which $\mathfrak{F}(\mathcal{S})$ meets the umbilic axis. We remark that $\mu_p$ may not be well defined for every $\mathcal{S}$.
    \item A surface is said to be \textit{totally umbilic} around $p$ if there exists a neighbourhood of $p$ in $\mathcal{S}$ which contains only umbilic points.
    \item $\mathcal{S}$ will be called \textit{non-flat at the point} $q\in\mathcal{S}$ if $K(q)\neq0$, and \textit{non-flat} if it is non-flat at every point.
     \item $\mathcal{S}$ will be called \textit{convex at the point} $q\in\mathcal{S}$ if $K(q)\geq 0$, and \textit{convex} if it is convex at every point.
    \item $\mathcal{S}$ will be called \textit{strictly convex at the point} $q\in\mathcal{S}$ if $K(q)>0$, and \textit{strictly convex} if it is strictly convex at every point.
\end{itemize}
 We remark that our usage of the word `non-flat' here does not coincide with the idea of a surface being distinct from a plane, locally. A point is non-flat if and only if it is not a parabolic and planar point.\\ \hfill \\
We first consider an umbilics points which lie off the axis of rotational symmetry.  With a minor technical assumption on the umbilic $p$, we show that if $\mathcal{S}$ is non-flat at $p$, then if $\mathfrak{F}(\mathcal{S})$ has a tangent line at $p$, it must be vertical. (Theorem \ref{thm: slope is unbounded}). The behaviour of $\mu_p$ for an umbilic point $p$ lying on the axis of rotational symmetry is then considered. Let $r_1$ and $r_2$ be the radii of curvature of $\mathcal{S}$ and $\theta$ the angle formed between $\mathcal{S}$'s (oriented) axis of rotational symmetry and its oriented normal vector. We may assume without loss of generality that $\theta=0$ at $p$.
\begin{nThm}{\ref*{thm: general slope restriction}}
Let $\mathcal{S}$ be strictly convex at an isolated umbilic point $p$ on the axis of rotational symmetry. Suppose at $p$, $\mathcal{S}$ has an umbilic slope of $\mu_p\in\mathbb{R}$.
\begin{enumerate}[label=(\Alph*)]
\item If the radii of curvature satisfy
$\lim\limits_{ \theta \to 0}\left( \frac{r_2-r_1}{\sin^\alpha \theta}\right)=\gamma$ for some $\alpha\geq 0$, $\gamma\in\mathbb{R}$, then $\mu_p \geq \alpha+1$, with equality if $\gamma \neq 0$.
\item Conversely if $\mu_p>\alpha+1$ then $\lim\limits_{ \theta \to 0}\left( \frac{r_2-r_1}{\sin^\alpha \theta}\right)=0$.
\end{enumerate}
\end{nThm}
\vspace{0.1in}
\noindent \Cref{thm: general slope restriction} is a corollary of \Cref{thm: weak general slope restriction} which bounds the limit superior and limit inferior of the average slope of $\mathfrak{R}(\mathcal{S})$ near an umbilic point, covering cases where $\mu_p$ may not be well defined. As a corollary of \Cref{thm: general slope restriction} we show the following 
\vspace{0.1in}
\begin{nCor}{\ref*{cor: slope if c4 or 4}}
Let $p$ be an isolated umbilic point on the axis of rotational symmetry of $\mathcal{S}$, a strictly convex and $C^3$-smooth surface. Then if $\mu_p$ exists, $\mu_p\geq 2$. If in addition $\mathcal{S}$ is $C^4$-smooth then $\mu_p\geq 3$.
\end{nCor}
\noindent The above theorems hence characterise $\mu_p$ as a measure of the rate at which $\mathcal{S}$ becomes umbilic.
\subsubsection*{$\slr$ Transformations}
Our second topic concerns a family of transformations which, for each Weingarten relation (\ref{eq: intro weingarten relation}) having a rotationally symmetric solution, produces a new relation which also admits a rotationally symmetric solution. $\slr$ acts on the $k_1k_2$-plane by real fractional linear transformations, which coincide with the isometries of the geometrised $k_1k_2$-plane considered in \cite{gk05}. Our main theorem is
\vspace{-0.5cm}
\begin{nThm}{\ref*{thm: new surface form old with sl2r}}
If $T$ is a real fractional linear transformation of the $k_1k_2$-plane with $\mathcal{S}$ non-flat, then there exists a rotationally symmetric and possibly non-regular surface $\widetilde{\mathcal{S}}$ such that $\mathfrak{F}(\widetilde{\mathcal{S}})=T(\mathfrak{F}(\mathcal{S}))$.
\end{nThm}
\noindent The action of $\slr$ is then described geometrically in terms of transformations of $\mathcal{S}$ in $\mathbb{E}^3$ (\Cref{thm: slr decomposition}). A class of surfaces called \textit{semi-quadratic Weingarten surfaces} satisfying a Weingarten relation of the form
\begin{equation}\label{intro eq: quadratic Weingarten relationship}
    \alpha k_1k_2+\beta k_1 + \gamma k_2 +\delta =0 \qquad \alpha,\beta,\gamma,\delta \in \mathbb{R},
\end{equation}
are investigated. This class contains well-known subclasses of surfaces, such as ones linear in $k_1$ and $k_2$, ones linear in the mean and Gauss curvature, $H$ and $K$, and ones linear in the radii of curvature, $r_1=\frac{1}{k_1}$ and $r_2=\frac{1}{k_2}$, investigated in \cite{lopez08,pampano20}, \cite{gmm03,lopez08 hyp} and \cite{gk21}, respectively. The quantities
\begin{align*}
&\Lambda_1=\beta-\gamma, &\Lambda_2=(\beta+\gamma)^2-4\alpha\delta,
\end{align*}
 are introduced which characterise when the PDE (\ref{intro eq: quadratic Weingarten relationship}) is elliptic, namely $\Lambda_2>\Lambda_1^2$ (\Cref{prop: qw ellipticity condition}).
When $\Lambda_1=0$ semi-quadratic surfaces become LW-surfaces. LW-surfaces are classified into three types, \textit{elliptic} when $\Lambda_2>0$ \cite{gmm03}, \textit{hyperbolic} when $\Lambda_2<0$ \cite{lopez08 hyp} and a border case $\Lambda_2=0$ which describe subsets of spheres, tubular surfaces or planes. This motivates a generalisation of the nomenclature:
\vspace{-0.1in}
\begin{Def}
    A semi-quadratic Weingarten surface satisfying $\Lambda_2>\Lambda_1^2$ is said to be \textit{elliptic}. If $\Lambda_2<\Lambda_1^2$ it is said to be \textit{hyperbolic}.
\end{Def}
\noindent Semi-quadratic relations form an invariant set under the action of $\slr$ and the ratio $\Lambda_1^2/\Lambda_2$ is shown to be an invariant (\Cref{prop: slr preserve qw}). The $\slr$ transformations are then shown to be transitive on all semi-quadratic relations satisfying $\Lambda_2>0$ and sharing the same invariant (\Cref{prop: sl2 acts transitively on QW}), therefore such semi-quadratic surfaces can be transitively related by induced transformations in $\mathbb{E}^3$. This is used to show the following.
\begin{nThm}{\ref*{thm: parabolic QW are canal}}
    Let $\mathcal{S}$ be a connected rotationally symmetric semi-quadratic Weingarten surface for which $\Lambda_1^2=\Lambda_2$. Then $\mathcal{S}$ is a subset of a round sphere, tubular surface or plane.
\end{nThm}
\begin{nThm}{\ref*{thm: slr on QW to pure lin}}
Any non-flat rotationally symmetric, connected semi-quadratic Weingarten surface with $\Lambda_2>0$ is the image under a composition of homotheties, parallel translations and reciprocal transformations of a Weingarten surface satisfying the relation
\begin{equation}\label{intro: pure k lin equ}
k_2=\lambda k_1,
\end{equation}
for $\lambda>0$ when the surface is elliptic, or for $\lambda<0$ when the surface is hyperbolic.
\end{nThm}
\noindent Surfaces of revolution satisfying relation (\ref{intro: pure k lin equ}) were classified in \cite{pampano20}. \Cref{thm: slr on QW to pure lin} therefore extends this classification to rotationally symmetric semi-quadratic surfaces with $\Lambda_2>0$.\\

\noindent The paper is organised as follows. Section \ref{sec: background} details our method of describing surfaces of revolution and introduces the radius of curvature equivalent of the curvature diagram, termed the {\it RoC diagram}. Section \ref{sec: rot sym weingarten} explores the possible RoC diagrams for surfaces of revolution while Section \ref{sec: slr trans} investigates the effect of the $\slr$ mappings on Weingarten relations and the transformations they induce on surfaces.
\vspace{0.1in}
\section{Background} \label{sec: background}
\subsection{The Curvature of Surfaces of Revolution.}
In this short subsection we define a coordinate system on $\mathcal{S}$ in the special case $\mathcal{S}$ is non-flat. By continuity of the Gauss curvature, if a $C^2$-smooth surface is non-flat at a point it is also non-flat in a neighbourhood of that point - hence the constructed coordinates will be used to describe surfaces of revolution locally around non-flat points. Position $\mathcal{S}$ in $\mathbb{R}^3$ with the axis of rotational symmetry aligned with the $z$ axis. The principal foliations of $\mathcal{S}$ are given by the parallels and profile curves of $\mathcal{S}$ whose respective principal curvatures we denote by $k_1$ and $k_2$. Note because $\mathcal{S}$ is assumed non-flat, both $k_1$ and $k_2$ are non-zero. Let $\alpha$ be the profile curve of $\mathcal{S}$ which lies in the $yz$-plane (i.e. the generating curve of $\mathcal{S}$). Since $k_2\neq 0$ the Gauss map $\mathcal{N}:\alpha \to S^1$ is a local diffeomorphism and we may thus use the \textit{Gauss angle} $\theta\in(-\pi,\pi]$ on $S^1$ to locally parameterise $\alpha$. Here $\theta$ is the angle made between the normal vector of $\alpha$, denoted $\hat{n}$, and the positive $z$ axis. We orient $\alpha$ so that $\hat{n}(\theta)=(\sin(\theta),\cos(\theta))$. Points of $\alpha$ on the $z$ axis such that $\theta=0$ or $\theta=\pi$ will be called \textit{north} and \textit{south} poles respectively. If $\rho$ and $h$ denote the respective $y$ and $z$ components of $\alpha$, they satisfy the relationship
\begin{equation}\label{eq: rho and h slope is -tan}
\dv{h}{\rho}=-\tan\theta,
\end{equation}
and $\mathcal{S}$ may be described via $\vec{X}(\theta,\phi)=(\rho(\theta)\sin\phi,\rho(\theta)\cos\phi,h(\theta))$ for $(\theta,\phi)\in(-\pi,\pi]\times[0,\pi)$. This is illustrated in Figure \ref{fig: profile curv and surface}.
\begin{figure}[h!]
    \centering
    \begin{subfigure}{0.4\textwidth}
        \begin{tikzpicture}[scale=2.3]
\draw [->] (0,-1.2) -- (0,1.2);
\draw (0,1.2)++(0.0,0.2) node {$z$};
\draw [->] (-0.5,-.7) -- (1.5,-.7);
\draw (1.5,-.7)++(0.2,0.0) node {$y$};
\draw[color=blue,domain=-0.9:0.9,smooth,variable=\x]
  plot ({2-cosh(\x)},{sinh(\x)});
\draw (0.9,-1) node {$\color{blue}\alpha$};
\coordinate (P) at (0.979933244,0.201336003);
\draw[->] (0.979933244,0.201336003) -- ++(1.02006676,0.201336003) coordinate (n);
\draw (n) ++(0.2,0.05) node {$\hat{n}$};
\draw[dashed,-] (0.979933244,0.201336003) -- ++(0,1) coordinate (N);
\pic[draw, <-, "$\theta$", angle eccentricity=1.5] {angle = n--P--N};
\end{tikzpicture}
    \end{subfigure}
    \hspace{2cm}
    \begin{subfigure}{0.3\textwidth}
        \centering
        \includegraphics[width=0.9\textwidth]{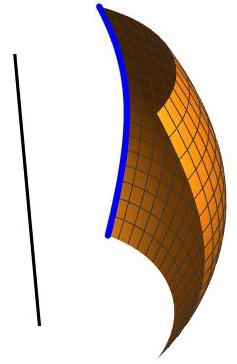} 
    \end{subfigure}
    \caption{The profile curve $\alpha$ (blue) in the $yz$-plane is parameterised by the angle $\theta$ and revolved by an angle of $\phi$ around the axis of rotation (black) to generate $\mathcal{S}$.} \label{fig: profile curv and surface}
\end{figure}

Throughout the paper we assume that $\mathcal{S}$ is fully revolved around the $z$-axis, so due to symmetry, if $\theta\in(-\pi,\pi)$ is a Gauss angle of $\mathcal{S}$ so is $-\theta$, with $\rho$ and $h$ satisfying $\rho(-\theta)=-\rho(\theta)$ and $h(-\theta)=h(\theta)$. Therefore we will often assume $\theta\in I$ where $I\subset [0,\pi]$ is the range of positive Gauss angles attained by $\mathcal{S}$. One may also describe $S$ by its \textit{support function}:
\[
r=\vec{X}\cdot \hat{N},
\]
with $\hat{N}=(\sin\theta\sin\phi,\sin\theta\cos\phi,\cos\theta)$ being the unit normal vector of $\mathcal{S}$. One can check that
\[
r=\rho(\theta)\sin\theta+h(\theta)\cos\theta
\] and so $r$ depends only on $\theta$ and $r(\theta)=r=(-\theta)$. We also have
\begin{align}\label{eq: rho, h in terms of r}
   &\rho=r\sin\theta  + \dv{r}{\theta}\cos\theta, 
   &h=r\cos\theta -\dv{r}{\theta}\sin\theta.
\end{align}
In this paper the radii of curvature of $\mathcal{S}$, namely $r_1:\mathcal{S}\to\mathbb{R}$ and $r_2:\mathcal{S}\to\mathbb{R}$, will be understood through their restrictions $r_1:\alpha\to\mathbb{R}$ and $r_2:\alpha\to\mathbb{R}$ to the profile curve $\alpha$. The corresponding coordinate expressions are denoted as $r_1(\theta)$ and $r_2(\theta)$ and by abuse of notation, often just as $r_1$ and $r_2$.
\begin{Prop}\label{prop: curvatures in terms of r}
The radii of curvature of $\mathcal{S}$ can be expressed as
\begin{align}\label{eq: curvatures in terms of r}
    &r_1=r+\dv{r}{\theta}\cot\theta, &r_2=r+ \dv[2]{r}{\theta},
\end{align}
for $\theta\in(0,\pi)$. Conversely, the support function $r$ and the coordinates $(\rho,h)$ are given by
\begin{equation}\label{eq: support from r1}
    r(\theta)=\frac{r(\theta_0)}{\cos\theta_0}\cos\theta+\cos \theta \int^\theta_{\theta_0}\frac{r_1(\theta)\sin\theta}{\cos^2\theta}\td \theta,
\end{equation}
\begin{align}\label{eq: rho in terms of r1}
    &\rho =r_1\sin\theta, &\dv{h}{\theta}=-r_2\sin\theta.
\end{align}
\end{Prop}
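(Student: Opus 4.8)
The plan is to work entirely from the support function, taking as given the expressions $\rho = r\sin\theta + \dv{r}{\theta}\cos\theta$ and $h = r\cos\theta - \dv{r}{\theta}\sin\theta$ recorded in \eqref{eq: rho, h in terms of r}. Differentiating these in $\theta$, the cross terms cancel and one is left with the compact identities $\dv{\rho}{\theta} = \left(r + \dv[2]{r}{\theta}\right)\cos\theta$ and $\dv{h}{\theta} = -\left(r + \dv[2]{r}{\theta}\right)\sin\theta$. As a consistency check, their ratio reproduces the slope relation \eqref{eq: rho and h slope is -tan}. These two identities drive the whole argument: the factor $r + \dv[2]{r}{\theta}$ appearing in both is exactly what will be identified with $r_2$.

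For the forward direction I would treat the two radii separately. The meridian radius $r_2$ is the ordinary radius of curvature of the planar profile curve $(\rho(\theta),h(\theta))$; since $\theta$ differs from the turning angle of the tangent only by the constant $\pi/2$, the radius of curvature equals the arc-length rate $\dv{s}{\theta}$, and the identities above give $\left(\dv{s}{\theta}\right)^2 = \left(\dv{\rho}{\theta}\right)^2 + \left(\dv{h}{\theta}\right)^2 = \left(r + \dv[2]{r}{\theta}\right)^2$, so that $r_2 = r + \dv[2]{r}{\theta}$ after fixing the sign by the chosen orientation. For the parallel radius $r_1$ I would invoke Meusnier's theorem: on a surface of revolution the principal radius along a parallel is the length of the normal segment from the surface point to the axis, and a point lying at distance $\rho$ from the axis whose normal makes angle $\theta$ with it sits at normal-distance $\rho/\sin\theta$ from the axis, giving $r_1 = \rho/\sin\theta$. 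Substituting $\rho = r\sin\theta + \dv{r}{\theta}\cos\theta$ then yields $r_1 = r + \dv{r}{\theta}\cot\theta$. I expect this geometric identification of $r_1$, together with tracking the orientation so that both radii come out positive on a convex surface (a quick check is the unit sphere, where $r\equiv 1$ gives $r_1 = r_2 = 1$ and $\rho = \sin\theta$), to be the only genuinely delicate point; everything else is mechanical.

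The converse direction is then elementary. Viewing $r_1 = r + \dv{r}{\theta}\cot\theta$ as a linear first-order ODE for $r(\theta)$ and multiplying through by $\tan\theta$ gives $\dv{r}{\theta} + r\tan\theta = r_1\tan\theta$, whose integrating factor is $\sec\theta$. Hence $\frac{\td}{\td\theta}\!\left(r\sec\theta\right) = \frac{r_1\sin\theta}{\cos^2\theta}$, which integrates directly to the stated formula $r(\theta) = \cos\theta\int^\theta_{\theta_0}\frac{r_1(\theta)\sin\theta}{\cos^2\theta}\,\td\theta$. Finally, $\rho = r_1\sin\theta$ is merely the rearrangement of the relation $r_1 = \rho/\sin\theta$ obtained in the forward direction, and $\dv{h}{\theta} = -r_2\sin\theta$ is read off immediately from the identity $\dv{h}{\theta} = -\left(r + \dv[2]{r}{\theta}\right)\sin\theta$ combined with $r_2 = r + \dv[2]{r}{\theta}$.
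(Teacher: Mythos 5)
Your argument is correct, and its forward half takes a genuinely different route from the paper's. The paper's own proof of (\ref{eq: curvatures in terms of r}) is essentially citation-plus-substitution: it quotes the standard formulas for the principal curvatures of a surface of revolution in terms of its generating curve \cite[p.120]{Woodward2019} and substitutes the expressions (\ref{eq: rho, h in terms of r}) for $(\rho,h)$ in terms of the support function; the relations (\ref{eq: rho in terms of r1}) then follow by combining (\ref{eq: rho, h in terms of r}) with (\ref{eq: curvatures in terms of r}), and (\ref{eq: support from r1}) by integrating the $r_1$ equation, exactly as in your converse step. You instead derive both curvature formulas from first principles: $r_2$ as the arc-length rate $\dv{s}{\theta}$ of the profile curve, using that the Gauss angle differs from the tangent turning angle by a constant, and $r_1$ as the signed length $\rho/\sin\theta$ of the normal segment from the point to the axis, via Meusnier's theorem. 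What your route buys is a self-contained proof that makes the geometric meaning of the two radii transparent; indeed your intermediate identities $\dv{\rho}{\theta}=\left(r+\dv[2]{r}{\theta}\right)\cos\theta$ and $\dv{h}{\theta}=-\left(r+\dv[2]{r}{\theta}\right)\sin\theta$ reappear later in the paper (compare (\ref{eq: drho/dtheta in terms of r2})). What the paper's route buys is brevity, at the cost of delegating the actual computation to a reference. The only delicate points in your version are the sign conventions --- $\rho$ is signed through (\ref{eq: rho and h slope is -tan}), and the arc-length computation only gives $r_2=\pm\left(r+\dv[2]{r}{\theta}\right)$ until the orientation is invoked --- but you flag these explicitly, and they are handled no more carefully (i.e.\ implicitly) in the paper's proof.
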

\begin{proof}
Firstly the pair of relations in (\ref{eq: rho in terms of r1}) are derived from equation (\ref{eq: rho and h slope is -tan}) and the standard formula for the curvatures of a surface of revolution in terms of their generating curve \cite[p.120]{Woodward2019}. Relations (\ref{eq: curvatures in terms of r}) then follow from equations (\ref{eq: rho, h in terms of r}) and (\ref{eq: rho in terms of r1}). Finally equation (\ref{eq: support from r1}) follows from integration of the equation for $r_1$ in (\ref{eq: curvatures in terms of r}).
\end{proof}

\noindent We remark that as a corollary of equations (\ref{eq: rho in terms of r1}), $\rho$ takes the same sign as $r_1$ when $\theta\in[0,\pi]$ and
\begin{align*}
    &r_1(-\theta)=r_1(\theta) &&\text{and} &r_2(-\theta)=r_2(\theta).
\end{align*}
Thus whenever $r_1$ and $r_2$ are differentiable functions at $\theta=0$ we have $r_1'(0)=r_2'(0)=0$.\\ \hfill \\

In the general scenario, the radii of curvature of a $C^3$-smooth surface cannot be assumed differentiable at an isolated umbilic point. However in the case considered in this section, where $\mathcal{S}$ is rotationally symmetric and strictly convex, the regularity of the radii of curvatures is implied by the regularity of $\mathcal{S}$.
\begin{Lem}\label{lem: C3 surf gives C1 radii}
    If $\mathcal{S}$ is a strictly convex, rotationally symmetric and $C^3$-smooth surface, then $r_1(\theta)$ and $r_2(\theta)$ are $C^1$-smooth on $[0,\pi]$.
\end{Lem}
\begin{proof}
    If $\mathcal{S}$ is $C^3$-smooth then the support function $r$ is also $C^3$-smooth. The claimed regularity of $r_2(\theta)$ follows trivially from equations (\ref{eq: curvatures in terms of r}). To show the claimed regularity of $r_1(\theta)$, note that for $\theta\in(0,\pi)$, equations (\ref{eq: curvatures in terms of r}) show $r_1(\theta)$ has regularity $C^{2}$, thus $C^{1}$ regularity is implied on $(0,\pi)$. We only need to show the required regularity at $\theta=0$ and $\theta=\pi$. We show the $\theta=0$ case first. Write by differentiation of equations (\ref{eq: curvatures in terms of r})
\begin{equation}\label{eq: r_1 first der}
    r_1'=\frac{r''\sin\theta\cos\theta-r'\cos^2\theta}{\sin^2\theta}.
\end{equation}
The numerator of this quotient vanishes as $\theta\to 0$ since $r'(0)=0$. Thus a straightforward application of L'Hopitals rule gives
\[
\lim_{\theta\to 0}r_1'(\theta)=\frac{1}{2}r'''(0)+r'(0)=0,
\]
since $r'''(0)=0$ also. The claim at $\theta=\pi$ follows similarly.
\end{proof}
\begin{Lem}\label{lem: C4 surf gives C2 radii}
    If $\mathcal{S}$ is a strictly convex, rotationally symmetric and $C^4$-smooth surface, then $r_1(\theta)$ and $r_2(\theta)$ are $C^2$-smooth on $[0,\pi]$.
\end{Lem}
\begin{proof}
    This lemma follows almost identically to the proof of the previous lemma, except this time we must check if $\lim\limits_{\theta\to0}r_1''(\theta)$ exists. Differentiating equation (\ref{eq: r_1 first der}) yields for all $\theta\in(0,\pi)$;
    \[
    r_1''=\frac{r'''\cos\theta\sin^2\theta-r''\sin\theta(1+\cos^2\theta)+2r'\cos(\theta)}{\sin^3\theta}.
    \]
    An application of L'Hopital yields
    \[
    \lim\limits_{\theta\to 0}r_1''(\theta)=\lim\limits_{\theta\to 0}\left(\frac{1}{3}r^{(4)}-\frac{2r'''\sin\theta}{3\cos\theta}+r''-\frac{2}{3}r'\cot\theta\right).
    \]
Note one quickly checks $r'\cot\theta \to r''(0)$ as $\theta \to 0$ giving
\[
 \lim\limits_{\theta\to 0}r_1''(\theta)=\frac{1}{3}\left(r^{(4)}(0)+r''(0)\right).
\]
\end{proof}
\begin{Prop}\label{prop: curve and CM iff surface}
The radii of curvature of $\mathcal{S}$ satisfy an integrability condition called the {\em derived Codazzi-Mainardi equation}
\begin{equation}
\label{eq: differentiated Codazzi-Mainardi}
\dv{r_1}{\theta}=(r_2-r_1)\cot \theta, \qquad \theta \in I\backslash\{0,\pi\}.
\end{equation}
Conversely, (\ref{eq: differentiated Codazzi-Mainardi}) is sufficient for a continuous map $\mathcal{J}:I \to \mathbb{R}^2\backslash\{(0,0)\}$, $\theta \mapsto (r_1(\theta),r_2(\theta))$, with $I\subset [0,\pi]$ and $r_1(\theta)$ differentiable except possibly at $\theta=0$ or $\pi$, to parameterise the radii of curvature of a rotationally symmetric $C^2$-smooth surface.
\end{Prop}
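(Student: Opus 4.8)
The plan is to handle the forward implication by direct differentiation and the converse by an explicit reconstruction of the generating curve, with the derived Codazzi--Mainardi equation emerging as the exact compatibility condition between the two formulas of \Cref{prop: curvatures in terms of r}.

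For the forward direction I would differentiate $r_1=r+\dv{r}{\theta}\cot\theta$ from (\ref{eq: curvatures in terms of r}) with respect to $\theta$, using $\dv{\cot\theta}{\theta}=-\csc^2\theta$ together with $\csc^2\theta=1+\cot^2\theta$. The bare first-derivative terms then cancel and one is left with $\dv{r_1}{\theta}=\dv[2]{r}{\theta}\cot\theta-\dv{r}{\theta}\cot^2\theta$. Reading $r_2-r_1=\dv[2]{r}{\theta}-\dv{r}{\theta}\cot\theta$ off (\ref{eq: curvatures in terms of r}) and multiplying by $\cot\theta$ reproduces this expression exactly, giving (\ref{eq: differentiated Codazzi-Mainardi}). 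The only regularity point worth noting is that although $\mathcal{S}$ is merely $C^2$, so that $r$ is only $C^2$, the combination $r+\dv{r}{\theta}\cot\theta$ is automatically $C^1$ off the poles, so $\dv{r_1}{\theta}$ is legitimately defined there.

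For the converse I would build the generating curve directly from (\ref{eq: rho in terms of r1}): set $\rho(\theta)=r_1(\theta)\sin\theta$ and $h(\theta)=-\int_{\theta_0}^{\theta}r_2(s)\sin s\,\td s$, the latter being well defined since $r_2$ is continuous. The crucial step is to show that (\ref{eq: differentiated Codazzi-Mainardi}) forces $\theta$ to be the genuine Gauss angle of this curve. Differentiating $\rho$ and substituting $\dv{r_1}{\theta}=(r_2-r_1)\cot\theta$ collapses the result to $\dv{\rho}{\theta}=r_2\cos\theta$; combined with $\dv{h}{\theta}=-r_2\sin\theta$ this yields $\dv{h}{\rho}=-\tan\theta$, which is precisely (\ref{eq: rho and h slope is -tan}). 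Hence revolving $(\rho,h)$ about the $x^3$-axis produces a rotationally symmetric surface on which $\theta$ is the Gauss angle, and feeding the resulting $\rho=r_1\sin\theta$ and $\dv{h}{\theta}=-r_2\sin\theta$ back through the standard surface-of-revolution curvature formulas (equivalently, through \Cref{prop: curvatures in terms of r}) returns the prescribed pair $(r_1,r_2)$ as its radii of curvature. A useful consistency remark is that (\ref{eq: differentiated Codazzi-Mainardi}) upgrades the hypothesis \emph{$r_1$ differentiable} to \emph{$r_1\in C^1$} off the poles, since its right-hand side $(r_2-r_1)\cot\theta$ is continuous there; this is what makes the constructed surface $C^2$ via the correspondence between the smoothness of the surface and of its radii of curvature recorded earlier.

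I expect the genuine difficulty to lie not in the algebra above but in the behaviour at the endpoints $\theta=0,\pi$, where $\cot\theta$ is singular and the generating curve meets the axis of symmetry. There one must verify that $\rho=r_1\sin\theta\to0$ and that the curve closes onto the axis so that the revolved surface is $C^2$ across the pole rather than merely on $I\setminus\{0,\pi\}$; the hypothesis that $r_1$ need only be differentiable away from the poles is tailored precisely to this. A secondary point is orientation and the sign of $r_2$: regularity of the generating curve requires $\dv{\rho}{\theta}$ and $\dv{h}{\theta}$ not to vanish simultaneously, i.e.\ $r_2\neq0$, which should be folded into the standing assumption $K\neq0$.
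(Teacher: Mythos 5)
Your proof is correct, and the forward direction coincides in substance with the paper's: the paper routes the same computation through the integrated form (\ref{eq: integrated cm}) (which it wants on record for later use) and then passes to the limit, while you differentiate $r_1=r+\dv{r}{\theta}\cot\theta$ directly; the algebra is identical. The converse, however, is genuinely different. The paper works at the level of the support function: it defines $r$ as a solution of the second-order ODE $r+\dv[2]{r}{\theta}=r_2(\theta)$ and then uses (\ref{eq: differentiated Codazzi-Mainardi}) to check that $r_1=r+\dv{r}{\theta}\cot\theta$, so that $r$ --- given explicitly by (\ref{eq: support from r1}) --- is the support function of a rotationally symmetric $C^2$-smooth surface with the prescribed radii. (One detail the paper leaves implicit, which your route avoids entirely: both $r_1$ and $r+\dv{r}{\theta}\cot\theta$ satisfy the same first-order linear ODE $\dv{y}{\theta}=(r_2-y)\cot\theta$, so they differ by a multiple of $\csc\theta$, and one must tune the $\sin\theta$-part of the homogeneous solution of the second-order ODE to make them agree.) You instead reconstruct the profile curve in cylindrical coordinates via (\ref{eq: rho in terms of r1}), use Codazzi--Mainardi to collapse $\dv{\rho}{\theta}$ to $r_2\cos\theta$, deduce the slope relation (\ref{eq: rho and h slope is -tan}) identifying $\theta$ as the genuine Gauss angle, and read the prescribed curvatures back off the resulting surface of revolution. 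Your route is more geometric and makes explicit exactly where regularity of the generating curve (i.e.\ $r_2\neq0$, equivalently the section's standing assumption $K\neq0$) is needed; the paper's route lands directly on the support function, which is the object the rest of the paper computes with. Both arguments share the same unaddressed points --- $C^2$-smoothness across the poles $\theta=0,\pi$ and the reliance on nonvanishing Gauss curvature --- which you at least flag explicitly while the paper passes over them in silence, so neither constitutes a gap relative to the paper's own standard of rigour.
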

\begin{proof}
To derive the Codazzi-Mainardi equation, multiply the difference between equations (\ref{eq: curvatures in terms of r}) by $\cot\theta$ and integrate between $\theta_1$ and $\theta_2$, for $(\theta_1,\theta_2)\subset(0,\pi)$, to derive the integral relationship
\begin{equation}\label{eq: integrated cm}
r_1(\theta_2)-r_1(\theta_1)=\int^{\theta_2}_{\theta_1}(r_2-r_1)\cot \theta \td\theta.
\end{equation}
This is referred to as the \textit{integrated Codazzi-Mainardi relationship}. Dividing by $\theta_2-\theta_1$ and letting $\theta_2 \to \theta_1$ yields the result. Conversely, if $\mathcal{J}(\theta)=(r_1(\theta),r_2(\theta))$ is as stated, then define a $C^2$-smooth function $r:I\to\mathbb{R}$ by
\[
r_2(\theta)=r+\dv[2]{r}{\theta}.
\]
From the Codazzi-Mainardi equation it is easy to show $r$ satisfies
\[
r_1(\theta)=r+\cot(\theta)\dv{r}{\theta},
\]
for $\theta \in I$. Letting $\vec{X}=(\rho \sin\phi,\rho\cos\phi,h)$ where $\rho$ and $h$ are given by equation (\ref{eq: rho, h in terms of r}) gives a parametrisation for a $C^2$ rotationally symmetric surface whose support function is $r$ (given by equation (\ref{eq: support from r1}) explicitly) with the functions $r_1(\theta)$ and $r_2(\theta)$ as its radii of curvature. Since $r_1\neq 0$ and $r_2\neq 0$ by assumption, it can be quickly checked via equation (\ref{eq: rho in terms of r1}) that $\vec{X}_\theta\times\vec{X}_\phi\neq0$ and so the surface is regular.
\end{proof}
The equations presented in this section also allow us to describe \textit{non-regular surfaces}, i.e. images of $C^2$ homeomorphisms from an open subset of $\mathbb{R}^2$ to an open subset of $\mathbb{R}^3$ whose tangent vectors are linearly dependent at points. In present setting, one can check that $\vec{X}_\theta=0$ iff $r_2=0$ and $\vec{X}_\phi=0$ iff $r_1=0$, we call points with at least one vanishing radii of curvature \textit{cusps}.
\vspace{0.1in}
\subsection{The Radii of Curvature Diagram}
We will now again permit $\mathcal{S}$ to be possibly flat (i.e. to have vanishing Gauss curvature) at points. The curvature diagram $\mathfrak{F}(\mathcal{S})$ of $\mathcal{S}$ has already been introduced. We now introduce its radii of curvature equivalent for surfaces of revolution. Let $r_2$ be the radius of curvature of the profile curve of $\mathcal{S}$, and $r_1$ the radius of curvature of curves of constant $\theta$. Let $k_2$ and $k_1$ be their respective reciprocals.
\begin{Def}
The \textit{radii of curvature (RoC) diagram} of $\mathcal{S}$, $\mathfrak{R}(\mathcal{S})$, is the set
\[
\mathfrak{R}(\mathcal{S})=\left\{\left. (r_1,r_2)\in \widehat{\mathbb{R}} \cross \widehat{\mathbb{R}} \enskip \right |\enskip r_1,r_2 \text{ are radii of curvature attained at a point of }\mathcal{S}\right\},
\]
where $\widehat{\mathbb{R}}$ is the projectively extended real line, so $\widehat{\mathbb{R}} \cross \widehat{\mathbb{R}} \cong S^1\cross S^1$.
\end{Def}
\noindent The ambient space $\widehat{\mathbb{R}} \cross \widehat{\mathbb{R}}$ is called \textit{RoC space} and compactifies $\mathbb{R}^2$ by gluing in two copies of $S^1$ along the lines $r_1=\infty$ and $r_2=\infty$. The umbilic axis remains the diagonal line and surfaces that have zero Gauss curvature at points have RoC diagrams which are unbounded.
\begin{Def}
    Given a (possibly non-regular) surface $\mathcal{S}$, a point $p\in\mathcal{S}$ is said to be an \textit{umbilic point} of $\mathcal{S}$ if the pair $(r_1|_p,r_2|_p)$ lies on the diagonal of $\widehat{\mathbb{R}} \cross \widehat{\mathbb{R}}$, where $r_i|_p$, $i=1,2$ are the radii of curvature of $\mathcal{S}$ at $p$.
\end{Def}
\noindent Hence these definitions account for umbilics at flat points, when both $r_1$ and $r_2$ are non-finite, and also umbilics at cusps, when both $r_1$ and $r_2$ are
zero. To study the slope of $\mathfrak{R}(\mathcal{S})$ at the point $(r_0,r_0)$, with $r_0<\infty$ we will
consider the limiting value of the function $\mu(\theta)$;
\begin{equation}\label{eq: avg slope}
\mu(\theta)=\frac{r_2(\theta)-r_0}{r_1(\theta)-r_0},
\end{equation}
which gives the average slope of $\mathfrak{R}(\mathcal{S})$ between the points $(r_1(\theta),r_2(\theta))$ and $(r_0,r_0)$.
\begin{figure}[h!]
    \begin{subfigure}{0.45\textwidth}
    \centering
    \begin{tikzpicture}
\draw [->] (-.5,0) -- (4,0);
\draw (4.2,0) node {$r_1$};
\draw [->] (0,-.5) -- (0,4);
\draw (0,4.2) node {$r_2$};
\draw (-0.4,-0.4) node {O};
\draw [dashed,->] (-0.2,-0.2) -- (3.5,3.5);
\begin{scope}[xshift=5pt, yshift=5pt]
\draw[decoration={
            text along path,
            text={Umbilic Axis},
            text align={center},
            raise=-0.4cm},decorate] (0,0) -- (3,3);
\end{scope}
\begin{scope}[xshift=15pt, yshift=15pt]
\draw[line width= 0.4mm,color=red,scale=1,domain=2.5:3.5,smooth,variable=\t]
  plot (\t/3+5/3,\t);
 \end{scope}
\begin{scope}
\draw[line width= 0.4mm,color=orange,scale=1,domain=0.5:2,smooth,variable=\t]
  plot (\t,{\t/(4*\t-1)});
\end{scope}
\end{tikzpicture}
    \end{subfigure}
   \begin{subfigure}{0.45\textwidth}
    \centering
    \begin{tikzpicture}
\draw [->] (-.5,0) -- (4,0);
\draw (4.2,0) node {$r_1$};
\draw [->] (0,-.5) -- (0,4);
\draw (0,4.2) node {$r_2$};
\draw (-0.4,-0.4) node {O};
\draw [dashed,->] (-0.2,-0.2) -- (3.5,3.5);
\begin{scope}[xshift=5pt, yshift=5pt]
\draw[decoration={
            text along path,
            text={Umbilic Axis},
            text align={center},
            raise=-0.4cm},decorate] (0,0) -- (3,3);
\end{scope}

\begin{scope}[xshift=15pt, yshift=15pt]
\draw[line width= 0.4mm,color=red,scale=1,domain=2.5:3.5,smooth,variable=\t]
  plot (\t/3+5/3,\t);
\draw [dashed,pattern=north west lines] (2.5,2) -- (2.5,2.5) -- (3,2.5);
\draw [dashed,pattern=north west lines] (2,2.5) -- (2.5,2.5) -- (2.5,3);
 \end{scope}
\begin{scope}
\draw[line width= 0.4mm,color=orange,scale=1,domain=0.5:2,smooth,variable=\t]
  plot (\t,{\t/(4*\t-1)});
\draw [dashed,pattern=north west lines] (.5,0) -- (.5,.5) -- (1,.5);
 \draw [dashed,pattern=north west lines] (0,.5) -- (.5,.5) -- (.5,1);
\end{scope}
\end{tikzpicture}
    \end{subfigure}
    \caption{Left: The RoC diagram of 
    a Weingarten surface satisfying the Weingarten relation $r_2=3r_1-5$ (red) and pairs $(r_1,r_2)$ satisfying a CMC relationship $k_1+k_2=c$ (orange). Right: Directions of negative slope at points on the umbilic axis.}
\label{fig: roc space}
\end{figure}
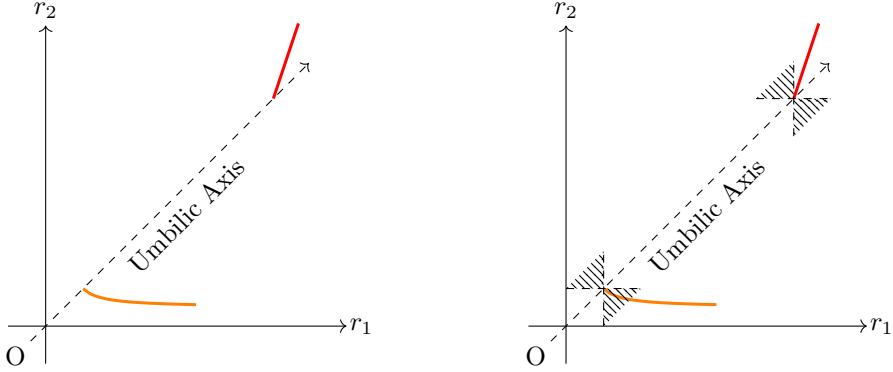
%
%

\begin{Def}
Let $p\in\mathcal{S}$ be a point of $\mathcal{S}$ with Gauss angle $\theta_0$. If $r_1(\theta_0)=r_2(\theta_0)=r_0$ so that $p$ is an umbilic point, we define the umbilic slope of $\mathcal{S}$ at $p$ as
\begin{equation}\label{eq: umbilic slope def}
    \mu_{p}=
\lim\limits_{\theta\to\theta_0}\left(\frac{r_2(\theta)-r_0}{r_1(\theta)-r_0}\right).
\end{equation}
\end{Def}
\noindent Note when $r_0$ is finite, the above definition of $\mu_p$ gives the slope of the tangent line to $\mathfrak{R}(\mathcal{S})$ in the $r_1r_2$-plane at the point $(r_0,r_0)$, when such a tangent line exists. When in addition $r_0\neq 0$, $\mu_p$ is equal to the slope of $\mathfrak{F}(\mathcal{S})$ at $(k_0,k_0)$ in the $k_1k_2$-plane:
\begin{equation}\label{eq: umbilic slope def with k}
    \mu_{p}=\lim\limits_{\theta\to \theta_0}\left(\frac{k_2(\theta)-k_0}{k_1(\theta)-k_0}\right),
\end{equation}
where $k_0=1/r_0$.
\hfill\\ \\ The limit (\ref{eq: umbilic slope def}) is, in general, not well defined for all choices of $\mathcal{S}$ or for all choices of $p\in\mathcal{S}$.  For example when $\mathcal{S}$ is a subset of a round sphere, both $r_1$ and $r_2$ are equal to $r_0$ at every point. It is also possible that umbilic points of $\mathcal{S}$ accumulate at $p$, leading $\mu(\theta)$ to be ill-defined on any open neighbourhood of $\theta_0$. In Section \ref{sec: rot sym weingarten} we will make assumptions on $\mathcal{S}$ around $p$ to avoid this scenario.  We remark that when a Weingarten relationship is given, namely if $W(k_1(\theta),k_2(\theta))=0$ in a punctured neighbourhood of $\theta_0$, then $\mu_p$ can be given in terms of the relationship $W$;
\begin{equation}\label{eq: umbilic slope in terms of WR}
    \mu_p=-\lim_{\theta\to \theta_0}\left({\pdv{W}{k_1}}\bigg / {\pdv{W}{k_2}} \right),
\end{equation}
\noindent a standard condition sufficient for the existence of $\mu_p$ is therefore that the gradient of $W(k_1,k_2)$ is non-vanishing at $p$ so the above limit is well defined \cite{Hopf1951,Hopf1989a}. A Weingarten surface is said to be \textit{elliptic}, if equation (\ref{eq: intro weingarten relation}) is an elliptic PDE. It can be shown \cite{Hopf1989a} that equation (\ref{eq: intro weingarten relation}) is elliptic at $q\in \mathcal{S}$ if and only if
\begin{equation}\label{eq: ellipticity cond}
   \left.  \left(\pdv{W}{k_1} \cdot \pdv{W}{k_2}\right) \right|_q >0.
\end{equation}
In particular, for a relation elliptic at the umbilic point $p\in\mathcal{S}$ we have that both $\left. \pdv{W}{k_1}\right|_p$ and $\left. \pdv{W}{k_2}\right|_p$ are non-zero, hence by equation (\ref{eq: umbilic slope in terms of WR}), elliptic Weingarten relations always have $\mu_p<0$.
\section{Obstructions to Weingarten Relations for Surfaces of Revolution}\label{sec: rot sym weingarten}
 In this section consequences of rotational symmetry are derived in terms of $\mathfrak{R}(\mathcal{S})$. The class of Weingarten surfaces solving a given relation can be large, as exemplified by the Weierstrass-Enneper representation for minimal surfaces \cite{hm87}. The requirement of rotational symmetry restricts the possible solutions to a relation greatly and often allows one to explicitly integrate the Weingarten relation (via equation (\ref{eq: differentiated Codazzi-Mainardi})) to find a $1$-parameter family of rotationally symmetric solutions.
\begin{Ex}\label{ex: hopf sphere r1}
The linear Hopf surfaces are defined as surfaces satisfying the Weingarten relation 
\begin{equation}\label{eq: lin hopf surf}
r_2=\lambda r_1+C,
\end{equation}
for $\lambda,C \in \mathbb{R}$. They are the stationary solutions to the linear Hopf curvature flow studied in \cite{gk21} and \cite{gr22}. Note that radii of curvature pairs $(r_1,r_2)=(0,C)$ and $(r_1,r_2)=(-C/\lambda,0)$ satisfy the linear Hopf relation, and so surfaces satisfying this relationship can potentially be non-regular. Inserting the linear Hopf relation into the Codazzi-Mainardi equation derives the separable ODE for $r_1$:
\[
\dv{r_1}{\theta}=\left(\left(\lambda-1\right)r_1+C\right)\cot\theta,
\]
which is solved to give
\begin{equation}\label{eq: hopf rad of curv}
r_1(\theta)=\frac{C}{1-\lambda}+\frac{A_0\sin^{\lambda-1}\theta}{\lambda-1}, \qquad A_0\in \mathbb{R}.
\end{equation}
We can then use equation (\ref{eq: support from r1}) to recover the support function for $\theta\in(\theta_1,\theta_2)$:
\[
r(\theta)=\frac{C}{1-\lambda}+\left(r_\text{\tiny{Hopf}}(\theta_1)-\frac{C}{1-\lambda}\right)\frac{\cos\theta}{\cos\theta_1}+\frac{A_0\cos\theta}{\lambda-1}\int^\theta_{\theta_1}\frac{\sin^\lambda \theta}{\cos^2\theta}\td \theta.
\]
One can check that when $A_0>C$, linear Hopf surfaces possess cusps so are non-regular.
\end{Ex}
 Our first point of discussion is to explore how the Codazzi-Mainardi equation controls the behaviour of $\mathfrak{R}(\mathcal{S})$. For example, it is quickly observed from equation (\ref{eq: differentiated Codazzi-Mainardi}) that for $\theta\in[0,\pi/2]$ if $\mathfrak{R}(\mathcal{S})$ is above the umbilic axis, i.e. $r_2>r_1$, $r_1$ must be increasing, and vice-versa when $\mathfrak{R}(\mathcal{S})$ is below the axis. These roles are reversed when $\theta\in[\pi/2,\pi]$, illustrated in Figure \ref{fig: vert slope}.  Equation (\ref{eq: differentiated Codazzi-Mainardi}) will be used to relate the slope of $\mathfrak{R}(\mathcal{S})$ at the umbilic axis to the rate at which $r_2-r_1$ vanishes as one approaches an umbilic point on $\mathcal{S}$. $\mathcal{S}$ is necessarily convex around umbilic points and thus our arguments will be local in nature - taking place in a convex subset of $\mathcal{S}$ in which $\theta$ may be used to parameterise the radii of curvature.
 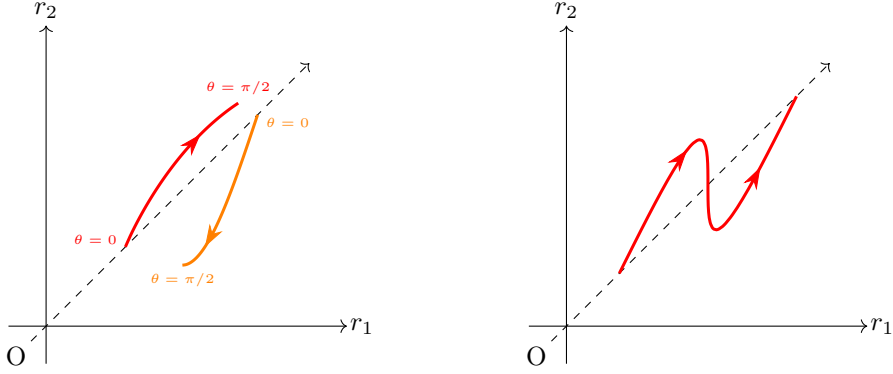
\begin{figure}[h!]
    \centering
    \begin{subfigure}{0.45\textwidth}
    \centering
    \begin{tikzpicture}
\draw [->] (-.5,0) -- (4,0);
\draw (4.2,0) node {$r_1$};
\draw [->] (0,-.5) -- (0,4);
\draw (0,4.2) node {$r_2$};
\draw (-0.4,-0.4) node {O};
\draw [dashed,->] (-0.2,-0.2) -- (3.5,3.5);
\begin{scope}[xshift=30pt, yshift=30pt]
\draw[line width= 0.4mm, color=red,scale=1,domain=0:1.5,smooth,variable=\t]
plot (\t,{-\t^1.5+2.5*\t});
\draw[-{Stealth[length=3mm, width=2mm]}, color=red](0.7,1.164) -- (1,1.5);
\draw (-0.4,0.1) node {\textcolor{red}{\tiny $\theta=0$}};
\draw (1.5,1.913+0.2) node {\textcolor{red}{\tiny $\theta=\pi/2$}};
 \end{scope}
\begin{scope}[xshift=80, yshift=80]
\draw[line width= 0.4mm, color=orange,scale=1,domain=0:1,smooth,variable=\t]
  plot (-\t,{\t^3-3*\t});
\draw[{Stealth[length=3mm, width=2mm]}-, color=orange](-0.7,-1.757) -- (-0.6,-1.584);
\draw (0.4,-0.1) node {\textcolor{orange}{\tiny $\theta=0$}};
\draw (-1,-2-0.2) node {\textcolor{orange}{\tiny $\theta=\pi/2$}};
\end{scope}
\end{tikzpicture}
\end{subfigure}
    \begin{subfigure}{0.45\textwidth}
    \centering
    \begin{tikzpicture}
\draw [->] (-.5,0) -- (4,0);
\draw (4.2,0) node {$r_1$};
\draw [->] (0,-.5) -- (0,4);
\draw (0,4.2) node {$r_2$};
\draw (-0.4,-0.4) node {O};
\draw [dashed,->] (-0.2,-0.2) -- (3.5,3.5);
\begin{scope}[xshift=20pt, yshift=20pt, scale=0.75]
\draw[line width= 0.4mm, color=red,scale=1,domain=0:3.141,smooth,variable=\t]
plot ({\t+0.5*sin(2*\t r)},{\t+0
1.5*sin(2*\t r)});
\draw[-{Stealth[length=3mm, width=2mm]}, color=red]({0.7+0.5*sin(2*0.7 r)},{0.7+
1.5*sin(2*0.7 r)}) -- ({0.71+0.5*sin(2*0.71 r)},{0.71+
1.5*sin(2*0.71 r)});

\draw[-{Stealth[length=3mm, width=2mm]}, color=red]({2.8+0.5*sin(2*2.8 r)},{2.8+
1.5*sin(2*2.8 r)}) -- ({2.81+0.5*sin(2*2.81 r)},{2.81+
1.5*sin(2*2.81 r)});
 \end{scope}
\end{tikzpicture}
\end{subfigure}
\caption{Left: RoC diagrams parameterised w.r.t. $\theta$. Right: $\mathfrak{R}(\mathcal{S})$ of a surface $\mathcal{S}$ satisfying $r_2-r_1=\sin 2\theta$. At $\theta=\pi/2$, $\mathcal{S}$ has a non-isolated umbilic hence $\mathfrak{R}(\mathcal{S})$ has a vertical tangent line when $\theta=\pi/2$.}
\label{fig: vert slope}
\end{figure}
We consider first the case when $p$ is an umbilic point of $\mathcal{S}$ which lies off the axis of rotational symmetry, in this case the Gauss angle of $p$, namely $\theta_0$, satisfies $\theta_0\neq 0,\pi$ and, due to the rotational symmetry of $\mathcal{S}$, $p$ lies in a curve consisting only of umbilic points, so is a non-isolated umbilic.
\begin{Thm}\label{thm: slope is unbounded}
Suppose $p\in\mathcal{S}$ is an umbilic which lies off the axis of symmetry of $\mathcal{S}$ and that $\mathcal{S}$ is non-flat at $p$. In addition assume that there is a punctured neighbourhood $U$ around $\theta_0$ such that $r_2(\theta)\neq r_1(\theta)$ on $U$. Then $\mu(\theta)$ is unbounded on $U$.
\end{Thm}
\begin{proof}
 We may assume $U\subset (0,\pi)$ since $p$ lies off the axis of symmetry. 
Let $s(\theta)=r_2(\theta)-r_1(\theta)$. By assumption there is a $\delta>0$ such that $s$ has no zeros on the interval $(\theta_0,\theta_0+\delta)\subset U$. We may also assume wlog that $\frac{\pi}{2}\not\in(\theta_0,\theta_0+\delta)$ so $\cot\theta$ has no zeros in this interval either. Hence by continuity both $s(\theta)$ and $\cot\theta$ take a fixed sign on $(\theta_0,\theta_0+\delta)$.
By taking $\theta_1=\theta_0$ and $\theta_2=\theta_0+\delta$ in the integrated Codazzi-Mainardi equation (\ref{eq: integrated cm}) we find that for $\theta\in(\theta_0,\theta_0+\delta)$
\[
|r_1(\theta)-r_0|=\int^\theta_{\theta_0}|s(\tau)|\cot\tau|\td \tau>0,
\]
hence $\mu(\theta)=\frac{r_2(\theta)-r_0}{r_1(\theta)-r_0}$ is well defined and may be written as
\begin{equation*}
\mu(\theta)=1+\frac{s(\theta)}{\int^\theta_{\theta_0}s(\tau)\cot\tau \td \tau},
\end{equation*}
which gives the following equation for $s(\theta)$:
\begin{equation}\label{eq: s in terms of slope theta_0}
    s(\theta)=(\mu(\theta)-1)\int^\theta_{\theta_0}s(\tau)\cot\tau \td \tau.
\end{equation}
Suppose for contradiction that $\mu(\theta)$ is bounded on $(\theta_0,\theta_0+\delta)$. Then
\[
|s(\theta)|\leq M\int^\theta_{\theta_0}|s(\tau)||\cot \tau|\td \tau,
\]
for some $M>0$. Performing the substitution $J(\theta)=\int^\theta_{\theta_0}|s(\tau)||\cot \tau|\td \tau$ and applying Gr\"onwall's inequality shows $s\equiv 0$ on $(\theta_0,\theta_0+\delta)$, which is a contradiction. Hence $\mu$ is unbounded.
\end{proof}
\begin{Cor}
 If $\mathfrak{R}(\mathcal{S})$ has a well defined tangent line at $(r_0,r_0)$, it must be vertical.
\end{Cor}
\noindent Now we consider when $p$ is isolated and must therefore lie on the axis of rotational symmetry. Without loss of generality we may assume $p$ is at the north pole of $\mathcal{S}$ so that $\theta_0=0$. In \cite{Galvez22} it was shown that when a surface has an isolated umbilic point, the slope of $\mathfrak{F}(\mathcal{S})$ must be positive as it meets the umbilic axis. Surfaces are necessarily convex around umbilic points - we show with the stronger assumption of strict convexity, a better bound can be given in the rotationally symmetric setting.

\begin{Lem}\label{lem: slope geq 1}
Let $\mathcal{S}$ be strictly convex at an  isolated umbilic point $p$ on the axis of rotational symmetry. Then $\liminf\limits_{\theta\to0}\mu(\theta) \geq 1$.
\end{Lem}
\begin{proof}
Since $p$ is isolated there exists an interval $(0,\delta)$, with $\delta<\frac{\pi}{2}$ on which $s(\theta)=r_2(\theta)-r_1(\theta)$ is non-zero. As before $\mu(\theta)$ can be given as
\begin{equation}\label{eq: mu in terms of s}
\mu(\theta)=1+\frac{s(\theta)}{\int^\theta_{0}s(\tau)\cot\tau \td \tau},
\end{equation}
which is well defined in $(0,\delta)$ since $p$ is isolated. For all $\theta\in(0,\delta)$, $s(\theta)$ is of a fixed sign and $\cot\theta>0$, thus $\int^\theta_0s(\tau)\cot\tau~\td \tau$ is of the same sign as $s(\theta)$. It follows that $\mu(\theta)\geq 1$ for all $\theta\in(0,\delta)$.
\end{proof}
\begin{Cor}
    Let $\mathcal{S}$ be strictly convex at an isolated umbilic point $p$ on the axis of rotational symmetry. When $\mu_p$ exists, $\mu_p\geq 1$.
\end{Cor}
\noindent We now go on to bound $\mu_p$ below by bounds larger than $1$, the bounds depending on the geometry of $\mathcal{S}$ near $p$. First a technical lemma is established.

\begin{Lem}\label{lem: tech lemma}
Let $\beta \geq 0$ and let $J:[0,c) \to \mathbb{R}$ be a continuous function on $[0,c)$ and differentiable on $(0,c)$ for some $c\geq 0$. Furthermore, suppose that $J$ is not identically $0$ on any punctured neighbourhood of $0$ and that
\[
\liminf\limits_{x\to 0}\left(\frac{\tan x J'(x)}{J(x)}\right)>-\infty.
\]
\begin{enumerate}[label=(\Alph*)]
    \item   If $\lim\limits_{x\to 0}\left(\frac{J(x)}{\sin^\beta x}\right)$ is finite then $\limsup\limits_{x\to 0}\left(\frac{\tan x J'(x)}{J(x)}\right)\geq \beta$.
     \item  If $\liminf\limits_{x\to 0}\left(\frac{\tan x J'(x)}{J(x)}\right)>\beta$, then  $\lim\limits_{x\to 0}\left(\frac{J(x)}{\sin^\beta x}\right)=0$.
\end{enumerate}
\end{Lem}
\begin{proof}
First we prove (A). Note
\[
\limsup\limits_{x\to 0}\left(\frac{\tan x J'(x)}{J(x)}\right)\geq\liminf\limits_{x\to 0}\left(\frac{\tan x J'(x)}{J(x)}\right)>-\infty,\] and if $\limsup\limits_{x\to 0}\left(\frac{\tan x J'(x)}{J(x)}\right)= \infty$ we are done. Hence the case $\limsup\limits_{x\to 0}\left(\frac{\tan x J'(x)}{J(x)}\right)=M$ for some $M\in\mathbb{R}$ is all that need be considered. Let $\varepsilon>0$. There exists some $\delta$ satisfying $0<\delta<c$ (which we may assume to be less than $\pi/2$) such that
\[
\frac{\tan x J'(x)}{J(x)}\leq M+\varepsilon, \qquad \text{ for all }x\in(0,\delta).
\]
If $0<z<y<\delta$, dividing through the above inequality by $\tan x$ and integrating between $x=z$ and $x=y$ gives the inequality $|J(y)|\leq |J(z)|\left(\frac{\sin y}{\sin z}\right)^{M+\varepsilon}$ implying
\begin{equation}\label{eq: upper bound on J(y)}
|J(y)|\leq \left|\frac{J(z)}{\sin^\beta z}\right|( \sin^{\beta-(M+\varepsilon)}z)(\sin^{M+\varepsilon}y), \qquad 0<z<y<\delta.
\end{equation}
If we assume for a contradiction that $M<\beta$, taking $\varepsilon$ such that $M+\varepsilon<\beta$ and letting $z\to 0$ inequality (\ref{eq: upper bound on J(y)}) implies implying $J(y)=0$ for all $y\in (0,\delta)$ which is a contradiction. Hence $M\geq\beta$.
\\ \hfill \\
Now (B) is proven. Let $N\in\mathbb{R}$ be such that $\liminf\limits_{x\to 0}\left(\frac{\tan x J'(x)}{J(x)}\right)\geq N>\beta$.
\\ \hfill \\
Let $\varepsilon>0$. There exists some $\delta$ satisfying $0<\delta<c$ (which we may again assume to be less than $\pi/2$) such that
\[
N-\varepsilon\leq \frac{\tan x J'(x)}{J(x)} \qquad \text{ for all }x\in(0,\delta).
\]
If $0<z<y<\delta$, dividing through the above inequality by $\tan x$ and integrating between $x=z$ and $x=y$ gives the inequality $|J(z)|\left(\frac{\sin y}{\sin z}\right)^{N-\varepsilon}\leq|J(y)|$, or after re-arrangement
\begin{equation}\label{eq: lower bound on J(y)}
\frac{|J(z)|}{\sin^\beta z}\leq \frac{|J(y)|}{\sin^{N-\varepsilon}y}\cdot \sin^{N-\varepsilon-\beta}z
\end{equation}
Taking $\varepsilon>0$ such that $N>\beta+\varepsilon$ and letting $z\to0$ in inequality (\ref{eq: lower bound on J(y)}) gives the claimed result.
\end{proof}
\begin{Thm}\label{thm: weak general slope restriction}
Let $\mathcal{S}$ be strictly convex at an isolated umbilic point $p$ on the axis of rotational symmetry.
\begin{enumerate}[label=(\Alph*)]
\item If the limit $\lim\limits_{ \theta \to 0}\left( \frac{r_2-r_1}{\sin^\alpha \theta}\right)$ is finite for some $\alpha\geq 0$, then $\limsup\limits_{\theta\to0}\mu(\theta)\geq \alpha +1$.
\item Conversely if $\liminf\limits_{\theta\to0}\mu(\theta)>\alpha+1$ then $\lim\limits_{ \theta \to 0}\left( \frac{r_2-r_1}{\sin^\alpha \theta}\right)=0$.
\end{enumerate}
\end{Thm}
\begin{proof}
 First consider the $\alpha=0$ case separately. Since $\mathcal{S}$ is assumed strictly convex
 \[
\limsup\limits_{\theta\to0}\mu(\theta)\geq \liminf\limits_{\theta\to0}\mu(\theta)\geq 1,
 \]
 with the last inequality holding by Lemma \ref{lem: slope geq 1}. Thus the conclusion of (A) holds. Also, again by strict convexity, $r_1(0)=r_2(0)=r_0<\infty$, where $r_0$ is the finite radii of curvature of $\mathcal{S}$ at $p$. Hence $\lim\limits_{ \theta \to 0}\left( \frac{r_2-r_1}{\sin^\alpha \theta}\right)=\lim\limits_{ \theta \to 0}\left(r_2-r_1\right)=0$. Hence the conclusion of (B) holds.
 \\ \hfill \\
 Now assume $\alpha>0$. In line with the notation of \Cref{lem: tech lemma}, let 
 \[
 J(\theta)=r_1(\theta)-r_0.
 \]
 It follows from equation (\ref{eq: mu in terms of s}) that
 \[
 \mu(\theta)=1+\frac{\tan\theta J'(\theta)}{J(\theta)}.
 \]
 Note that since $p$ is an isolated umbilic, $J(\theta)$ is non-zero in an interval $(0,c)$ for some $c>0$ and by the Codazzi-Mainardi equation (\ref{eq: differentiated Codazzi-Mainardi}) $J(\theta)$ is differentiable on $(0,c)$. Note we may take $c<\pi/2$ so that $\tan\theta>0$ and the Codazzi--Mainardi equation is not singular on $(0,c)$. Furthermore by Lemma \ref{lem: slope geq 1}
 \[
 \liminf\limits_{\theta\to 0}\left(\frac{\tan \theta J'(\theta)}{J(\theta)}\right)=\liminf\limits_{\theta\to 0}(\mu(\theta)-1)>0.
 \]
 Hence $J(\theta)$ satisfies the prerequisites of Lemma \ref{lem: tech lemma}. Statements (A) and (B) now follow from the respective statements in Lemma \ref{lem: tech lemma}.
 \end{proof}
In the special case we have the umbilic slope $\mu_p$ existing we have the following corollary
\begin{Thm}\label{thm: general slope restriction}
Let $\mathcal{S}$ be strictly convex at an isolated umbilic point $p$ on the axis of rotational symmetry. Suppose at $p$, $\mathcal{S}$ has an umbilic slope of $\mu_p\in\mathbb{R}$.
\begin{enumerate}[label=(\Alph*)]
\item If the radii of curvature satisfy
$\lim\limits_{ \theta \to 0}\left( \frac{r_2-r_1}{\sin^\alpha \theta}\right)=\gamma$ for some $\alpha\geq 0$, $\gamma\in\mathbb{R}$, then $\mu_p \geq \alpha+1$, with equality if $\gamma \neq 0$.
\item Conversely if $\mu_p\in\mathbb{R}$ satisfies $\mu_p>\alpha+1$ then $\lim\limits_{ \theta \to 0}\left( \frac{r_2-r_1}{\sin^\alpha \theta}\right)=0$.
\end{enumerate}
\begin{proof}
Since $\mu_p$ exists,
\[
\mu_p=\lim_{\theta\to0}\mu(\theta)=\limsup\limits_{\theta\to0}\mu(\theta)=\liminf\limits_{\theta\to0}\mu(\theta).
\]
Statement (A) almost follows by (A) of Theorem \ref{thm: weak general slope restriction}, we just need to show that $\mu_p=\alpha+1$ when $\gamma\neq 0$. If we assume this is the case then by L'H\^opitals rule
\begin{align*}
\mu_p&=1+\lim\limits_{\theta \to 0} \left(\frac{s(\theta)}{\int^\theta_{0}s(\tau)\cot\tau \td \tau}\right),\\
&=1+\lim\limits_{\theta \to 0}\left(\frac{s(\theta)}{\sin^\alpha \theta} \right) \cdot \lim\limits_{\theta \to 0}\left(\frac{\sin^\alpha \theta}{\int^\theta_0 s(\tau) \cot \tau \td \tau}\right),\\
&=\gamma \cdot \frac{\alpha}{\gamma}+1,\\
&=\alpha+1.
\end{align*}
Statement (B) follows directly from (B) of Theorem \ref{thm: weak general slope restriction}.
\end{proof}
\end{Thm}
We remark that $\mu_p=\alpha+1$ is not sufficient to determine the behaviour of $\frac{r_2-r_1}{\sin^\alpha \theta}$ as $\theta \to 0$. In particular, the strict inequality in \Cref{thm: general slope restriction} part (B) is tight, as the next example shows.
\begin{Ex}\label{ex: different limit behaviour}
Consider the family of surfaces of revolution satisfying
\[
r_2(\theta)-r_1(\theta)=\sin^\alpha \theta \cdot \ln(2\csc \theta)^\beta
\]
for $\beta\in \mathbb{R}$. The existence of these surfaces is shown by solving the above equation for $r_2-r_1$ together with the Codazzi-Mainardi equation (\ref{eq: differentiated Codazzi-Mainardi}) to find $r_1(\theta)$ and $r_2(\theta)$, and then applying \Cref{prop: curve and CM iff surface}. We remark that $\mu_p=\alpha+1$ for each of these surfaces but
\[
\lim\limits_{\theta\to 0}\left(\frac{r_2(\theta)-r_1(\theta)}{\sin^\alpha\theta}\right)=\begin{cases}
    0 & \beta=-1\\
    1 &\beta=0\\
    \infty &\beta=+1
\end{cases}.
\]
\end{Ex}
\begin{Cor}\label{cor: slope if c4 or 4}
    Let $p$ be an isolated umbilic point on the axis of rotational symmetry of $\mathcal{S}$, a strictly convex and $C^3$-smooth surface. Then if $\mu_p$ exists, $\mu_p\geq 2$. If in addition $\mathcal{S}$ is $C^4$-smooth then $\mu_p\geq 3$.
\end{Cor}
\begin{proof}
Since $\mathcal{S}$ is $C^3$-smooth, Lemma (\ref{lem: C3 surf gives C1 radii}) implies the radii of curvature are $C^1$-smooth and in particular have vanishing derivative at $\theta=0$. By L'H\^opital's rule
    \[
\lim\limits_{\theta \to 0}\left(\frac{r_2-r_1}{\sin\theta}\right)=\lim\limits_{\theta \to 0}\left(\frac{r_2'-r_1'}{\cos\theta}\right)=r_2'(0)-r_1'(0)=0,
    \]
and hence by \Cref{thm: general slope restriction} $\mu_p\geq 2$. If $\mathcal{S}$ is $C^4$-smooth then by a similar argument, applying Lemma (\ref{lem: C4 surf gives C2 radii});
  \[
\lim\limits_{\theta \to 0}\left(\frac{r_2-r_1}{\sin^2\theta}\right)=\frac{r_2''(0)-r_1''(0)}{2}.
    \]
Since this limit exists \Cref{thm: general slope restriction} implies $\mu_p\geq 3$.
\end{proof}

\vspace{0.1in}
\section{$\slr$ Transformations}
\label{sec: slr trans}
We first motivate the study of $\slr$ transformations of RoC space by discussing their geometrical significance. Consider the anti de-Sitter metric on RoC space, given in $(\psi,s)$ coordinates:
\begin{align}\label{eq: psi s coord}
   &g=\frac{\td\psi^2-\td s^2}{s^2}, &(\psi,s)=\left(\frac{r_2+r_1}{2},\frac{r_2-r_1}{2}\right).
\end{align}
The quantities $(\psi,s)$ are natural when discussing the set of oriented normal lines of a surface, which is a subset of the space of oriented lines in $\mathbb{R}^3$, denoted as $\mathbb{L}$. $\mathbb{L}$ is a 4 dimensional real manifold which is diffeomorphic to $T\mathbb{S}^2$ \cite{gk04} and carries a canonical K\"ahler structure with metric $\mathbb{G}$, of signature $(2,2)$ induced from the round metric on $\mathbb{S}^2$ \cite{gk05}. Given a surface in $\mathbb{R}^3$, its oriented normal lines form a surface in $\mathbb{L}$ which inherits a sub-manifold geometry from $\mathbb{G}$. The anti de-Sitter metric for RoC space arises from pushing forward this geometry to RoC space. See \cite{gk18}, Theorem 9, page 8 for further details.
There is a simple description of the isometries of $(\mathbb{R}^2,g)$ once we have complexified $\mathbb{R}^2$ with the split complex variable $j$ satisfying $j^2=+1$. 
\begin{Prop}[\cite{Kisil12}, Lemma 9.3, page 118]\label{prop: isom group action split plane}
The fractional linear transformations
\[
z \mapsto \frac{az+b}{cz+d}, \qquad \qquad \begin{pmatrix}a & b \\ c & d \end{pmatrix}\in \slr,
\]
where $z=\psi+js$ are isometries of $(\mathbb{R}^2,g)$. 
\end{Prop}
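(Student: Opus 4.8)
The plan is to mirror the classical proof that real Möbius transformations are isometries of the hyperbolic upper half-plane, transplanted to the split-complex setting so that the statement becomes a short algebraic identity. First I would rewrite the metric $g$ entirely in terms of the split-complex variable $z=\psi+js$ and its conjugate $\bar z=\psi-js$. Since $j^2=+1$, one has $\td z\,\td\bar z=\td\psi^2-\td s^2$ and $(z-\bar z)^2=(2js)^2=4s^2$, so that
\begin{equation*}
g=\frac{\td\psi^2-\td s^2}{s^2}=\frac{4\,\td z\,\td\bar z}{(z-\bar z)^2}.
\end{equation*}
This presents $g$ as the split-complex analogue of the familiar form $\tfrac{4\,\td z\,\td\bar z}{(z-\bar z)^2}$ of the hyperbolic metric, and reduces the invariance question to a purely formal computation.

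Next, writing $w=\frac{az+b}{cz+d}$ with $ad-bc=1$, I would establish two identities. Differentiating and using $ad-bc=1$ gives the derivative rule $\td w=\frac{\td z}{(cz+d)^2}$. For the denominator, the real coefficients make split-complex conjugation a ring automorphism fixing $\mathbb{R}$, so $\bar w=\frac{a\bar z+b}{c\bar z+d}$, and combining the two fractions yields
\begin{equation*}
w-\bar w=\frac{(ad-bc)(z-\bar z)}{(cz+d)(c\bar z+d)}=\frac{z-\bar z}{(cz+d)(c\bar z+d)}.
\end{equation*}
Substituting both identities into $\frac{4\,\td w\,\td\bar w}{(w-\bar w)^2}$, the factors $(cz+d)^2(c\bar z+d)^2$ cancel between numerator and denominator, leaving exactly $\frac{4\,\td z\,\td\bar z}{(z-\bar z)^2}=g$, which is the desired invariance.

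The one point requiring care, and the step I expect to be the main obstacle, is that the split-complex numbers form a commutative ring with zero divisors rather than a field, so the formal manipulations above (the derivative rule, the inverse $1/(cz+d)$, and the multiplicativity of conjugation) must be justified where they are valid. Concretely I would restrict to the locus $s\neq0$ on which $g$ is defined; there $z-\bar z=2js$ is invertible, and $cz+d$ is invertible away from the null lines $(c\psi+d)^2=(cs)^2$, so on this open set every identity holds verbatim. As a consistency check, or as a self-contained alternative, one can instead verify invariance on the generators of $\slr$ as Möbius maps, namely the translations $\psi\mapsto\psi+b$, the dilations $(\psi,s)\mapsto(\lambda\psi,\lambda s)$ with $\lambda>0$, and the inversion $z\mapsto-1/z$; the first two are immediate from the homogeneity of $g$, and only the inversion calls for a brief real computation.
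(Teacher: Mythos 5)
Your argument is correct, and every identity in it checks out: with $j^{2}=+1$ the symmetric product satisfies $\td z\,\td\bar z=\td\psi^{2}-\td s^{2}$ (the cross terms in $j$ cancel upon symmetrization), $(z-\bar z)^{2}=4s^{2}$, and for real coefficients with $ad-bc=1$ one has $\td w=\td z/(cz+d)^{2}$ and $w-\bar w=(z-\bar z)/\bigl((cz+d)(c\bar z+d)\bigr)$ wherever $cz+d$ is invertible, so $4\,\td w\,\td\bar w/(w-\bar w)^{2}=4\,\td z\,\td\bar z/(z-\bar z)^{2}=g$. The comparison with the paper is, however, one-sided: the paper does not prove this proposition at all, but imports it verbatim as Lemma 9.3 of \cite{Kisil12}, so what you have written is a self-contained replacement rather than an alternative to an internal argument. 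Two remarks on where your proof adds value. First, your care about zero divisors is exactly the point at which the split-complex case differs from the classical hyperbolic half-plane computation you are mirroring: $cz+d$ fails to be invertible precisely on the null lines $(c\psi+d)^{2}=c^{2}s^{2}$, which are also the points where the denominators $(c\psi+d)^{2}-c^{2}s^{2}$ appearing in the proof of \Cref{prop: coordinate transform} vanish, i.e., where the image escapes to infinity in RoC space; restricting to $s\neq 0$ with $cz+d$ invertible is the correct formulation of the isometry statement, and it is consistent with how the paper later extends $T_M$ to $\widehat{\mathbb{R}}\cross\widehat{\mathbb{R}}$. Second, your fallback of verifying invariance on generators (translations, dilations, and the inversion $z\mapsto -1/z$) parallels the paper's own decomposition of $\slr$ into the subgroups $N$, $A$ and the matrix $Q$ used in \Cref{thm: slr decomposition}, so that route would mesh particularly well with how the proposition is subsequently applied.
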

From this point on-wards, the above transformations are referred to as $\slr$ transformations and the symbols $a,b,c,d \in \mathbb{R}$ will exclusively denote the entries of a general element of $\slr$ as done in \Cref{prop: isom group action split plane}. The geodesics of this metric correspond to the RoC diagrams of surfaces which are linear in their Gauss and mean curvature, i.e. LW-surfaces.
\begin{Prop}
The geodesics of $(g,\mathbb{R}^2)$ are the curves satisfying
\begin{equation}\label{eq: H, K linear relation}
\alpha H+\beta K=\gamma
\end{equation}
where $\alpha,\beta,\gamma \in \mathbb{R}$ and $H$ and $K$ are the mean and Gauss curvature respectively.
\end{Prop}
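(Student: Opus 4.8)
The plan is to compute both families of curves explicitly in the $(\psi,s)$ coordinates of \eqref{eq: psi s coord} and verify they coincide. First I would translate the relation \eqref{eq: H, K linear relation} into these coordinates. Writing $r_1=\psi-s$ and $r_2=\psi+s$ and using $k_i=1/r_i$, one finds $K=k_1k_2=1/(r_1r_2)=1/(\psi^2-s^2)$ and $H=\tfrac12(k_1+k_2)=\psi/(\psi^2-s^2)$. Substituting into $\alpha H+\beta K=\gamma$ and clearing the common denominator $\psi^2-s^2$ yields the purely algebraic equation $\gamma(\psi^2-s^2)-\alpha\psi-\beta=0$. When $\gamma=0$ this is the vertical line $\psi=\mathrm{const}$; when $\gamma\neq 0$, completing the square in $\psi$ gives $(\psi-\psi_0)^2-s^2=c$ with $\psi_0=\alpha/(2\gamma)$ and $c=\alpha^2/(4\gamma^2)+\beta/\gamma$. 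As $(\alpha,\beta,\gamma)$ range over $\mathbb{R}^3$ the centre $\psi_0$ and the constant $c$ range over all of $\mathbb{R}$, so the curves \eqref{eq: H, K linear relation} are exactly the vertical lines together with all curves of the form $(\psi-\psi_0)^2-s^2=c$.

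Second, I would compute the geodesics of $g=(\td\psi^2-\td s^2)/s^2$ directly. Since the metric coefficients are independent of $\psi$, the field $\partial_\psi$ is Killing, giving the conserved quantity $\dot\psi/s^2=E$ along any affinely parametrised geodesic, while conservation of the speed gives $(\dot\psi^2-\dot s^2)/s^2=\epsilon$. Eliminating $\dot\psi=Es^2$ produces $\dot s^2=E^2s^4-\epsilon s^2$, hence the separable first-order ODE $(\td s/\td\psi)^2=1-\epsilon/(E^2s^2)$ for the unparametrised geodesic. Integrating $Es\,\td s/\sqrt{E^2s^2-\epsilon}=\pm\,\td\psi$ and squaring gives precisely $(\psi-\psi_0)^2-s^2=-\epsilon/E^2$ when $E\neq0$, whereas $E=0$ forces $\psi=\mathrm{const}$. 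Thus the geodesic family is exactly the vertical lines together with the curves $(\psi-\psi_0)^2-s^2=c$, matching the first paragraph, and comparing the two lists establishes the claim.

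The individual computations are routine; the point that needs care is that the correspondence is \emph{exhaustive and consistent} across the degenerate cases. I would check that the $E=0$ vertical geodesics are precisely the $\gamma=0$ curves, that the null geodesics $\epsilon=0$ (giving $c=0$ and the asymptotic lines $s=\pm(\psi-\psi_0)$) correspond to the degenerate relations $\alpha^2/(4\gamma^2)+\beta/\gamma=0$, and that every admissible pair $(\psi_0,c)$ is realised by some $(\alpha,\beta,\gamma)$ and conversely. I would also note that $g$ is only defined where $s\neq0$, i.e. off the umbilic axis, so the matching takes place on $\{s\neq0\}$, which is exactly where the RoC diagrams of strictly convex surfaces live. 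As a slicker alternative worth remarking, one can observe that $(\mathbb{R}^2,g)$ is a model of $\mathrm{AdS}_2$ on which the split fractional linear maps of \Cref{prop: isom group action split plane} act by isometries, and hence preserve the geodesic equation; this reduces the verification to a single representative, e.g. the spacelike geodesic $\psi=\mathrm{const}$, from which the full family is generated by the group action.
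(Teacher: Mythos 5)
Your proof is correct, and it takes a recognizably different route from the paper's. The paper exploits maximal symmetry: it takes all three independent Killing vectors $U_i$ of the anti de-Sitter metric, forms the three conserved quantities $\lambda_i=g(U_i,V)$ along a geodesic with tangent $V$, and eliminates $V$ \emph{algebraically} to land on the locus $\left(\psi-\lambda_2/\lambda_3\right)^2-s^2=(\lambda_2^2-\lambda_1\lambda_3)/\lambda_3^2$ (or $\psi=\mathrm{const}$ when $\lambda_3=0$), after which it asserts in one sentence that rewriting via \eqref{eq: psi s coord} gives \eqref{eq: H, K linear relation}. You instead use only the evident Killing field $\partial_\psi$ together with conservation of the speed $(\dot\psi^2-\dot s^2)/s^2$, reduce to a separable first-order ODE, and integrate by quadrature, arriving at the same family $\{(\psi-\psi_0)^2-s^2=c\}\cup\{\psi=\mathrm{const}\}$; your $E$ plays exactly the role of the paper's $\lambda_3$. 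What the paper's method buys is that no integration is needed, at the price of computing the full Killing algebra; what yours buys is elementarity (two conserved quantities anyone can write down) and, more substantively, that you actually carry out the translation of $\alpha H+\beta K=\gamma$ into $(\psi,s)$ coordinates and verify the correspondence is exhaustive in both directions, including the degenerate cases ($E=0$ versus $\gamma=0$, null geodesics versus $c=0$) --- steps the paper compresses or omits. Your closing observation that one could instead transport a single geodesic around by the isometries of \Cref{prop: isom group action split plane} is also sound, and is arguably the cleanest conceptual argument. One caveat that applies equally to both proofs: the conserved-quantity computation shows every geodesic lies in the stated algebraic family, while the converse inclusion strictly requires uniqueness of the geodesic through a given point and direction (or your parameter count matching point-and-direction data), so the exhaustiveness sentence in your write-up is doing real work and should be kept.
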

\begin{proof}
The 2D anti de-Sitter space is maximally symmetric, hence there are three linearly independent Killing vectors of $g$ denoted $U_i$, $i=1,2,3$ which we give in $(\psi,s)$ coordinates;
\begin{align*}
    &U_1=(\psi^2+s^2)\partial_\psi+2\psi s \partial_s,& U_2= \psi \partial_\psi+s\partial_s,&& &U_3=\partial_\psi.&
\end{align*}

If $V$ is a geodesic tangent vector, the quantities 
\[
\lambda_i=g(U_i,V), \qquad i=1,2,3,
\]
are constant along geodesics and therefore may be used to algebraically describe them. A short calculation shows that
\begin{align*}
    &\lambda_1=\dfrac{\psi^2+s^2}{s^2}\frac{\td\psi}{\td \theta}-\dfrac{2\psi}{s}\frac{\td s}{\td \theta}& \lambda_2=\dfrac{\psi}{s^2}\frac{\td\psi}{\td \theta}-\dfrac{1}{s}\frac{\td s}{\td \theta}& &\lambda_3=\dfrac{1}{s^2}\frac{\td \psi}{\td \theta}&,
\end{align*}
where $V=\left(\dv{\psi}{\theta},\dv{s}{\theta}\right)$. Eliminating $V$ from these equations gives an algebraic equation defining the geodesics of $g$:
\begin{align}\label{eq: ADS geodesic}
    &\left(\psi-\dfrac{\lambda_
    2}{\lambda_3}\right)^2-s^2=\dfrac{\lambda_2^2-\lambda_1\lambda_3}{\lambda_3^2}, & & \lambda_3 \neq 0,\\
    &\psi=\textstyle{ constant},& &\lambda_3=0 .
\end{align}
Writing the above equations in terms of the principal curvatures $k_1$ and $k_2$ via equation (\ref{eq: psi s coord}) gives the stated relationship (\ref{eq: H, K linear relation}) for some constants $\alpha$, $\beta$, $\gamma \in \mathbb{R}$.
\end{proof}
The action of the $\slr$ transformations on RoC space can be given in $(r_1,r_2)$ coordinates, denote the corresponding coordinate transformation as $T_M:\mathbb{R}^2\to\mathbb{R}^2$, for a given $M\in \slr$.
\begin{Prop}\label{prop: coordinate transform}
 Under a general $\slr$ transformation the radii of curvature are mapped to
\[
T_M(r_1,r_2)= \left(\frac{ar_1+b}{cr_1+d},\frac{ar_2+b}{cr_2+d}\right),
\]
$M$ a general element of $\slr$.
\end{Prop}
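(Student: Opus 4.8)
The plan is to exploit the split-complex structure underlying the coordinate $z=\psi+js$ with $j^{2}=+1$. The crucial observation is that the split-complex numbers admit a \emph{null basis} of idempotents
\[
e_{+}=\tfrac{1}{2}(1+j),\qquad e_{-}=\tfrac{1}{2}(1-j),
\]
satisfying $e_{+}^{2}=e_{+}$, $e_{-}^{2}=e_{-}$, $e_{+}e_{-}=0$ and $e_{+}+e_{-}=1$. First I would rewrite $z$ in this basis: substituting $(\psi,s)=\left(\tfrac{r_2+r_1}{2},\tfrac{r_2-r_1}{2}\right)$ from (\ref{eq: psi s coord}) and collecting terms gives $z=r_2\,e_{+}+r_1\,e_{-}$, so that the two radii of curvature are precisely the components of $z$ along the two null directions.

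The key step is then the elementary algebraic fact that, because $e_{+}e_{-}=0$, every arithmetic operation on split-complex numbers is performed \emph{diagonally} in the null basis: if $z=p\,e_{+}+q\,e_{-}$ and $w=p'e_{+}+q'e_{-}$ then $z+w=(p+p')e_{+}+(q+q')e_{-}$, $zw=pp'\,e_{+}+qq'\,e_{-}$, and (away from the zero divisors) $z^{-1}=p^{-1}e_{+}+q^{-1}e_{-}$. Consequently a fractional linear transformation $z\mapsto(az+b)(cz+d)^{-1}$ with real coefficients $a,b,c,d$ acts on each component separately, since each real scalar decomposes as $a=a\,e_{+}+a\,e_{-}$, and so on. Applying this to $z=r_2\,e_{+}+r_1\,e_{-}$ yields
\[
\frac{az+b}{cz+d}=\frac{ar_2+b}{cr_2+d}\,e_{+}+\frac{ar_1+b}{cr_1+d}\,e_{-}.
\]
Reading off these components against $z'=r_2'\,e_{+}+r_1'\,e_{-}$ gives $r_2'=\tfrac{ar_2+b}{cr_2+d}$ and $r_1'=\tfrac{ar_1+b}{cr_1+d}$, which is exactly the claimed formula for $T_M$.

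The only point requiring care is the domain of definition: split-complex inversion fails on the zero divisors (those $z$ with a vanishing null component), corresponding here to $cr_1+d=0$ or $cr_2+d=0$. This is precisely why the action is formulated on the compactified RoC space $\widehat{\mathbb{R}}\cross\widehat{\mathbb{R}}$ rather than on $\mathbb{R}^2$; extending each component map to the projective line $\widehat{\mathbb{R}}$ in the usual way (sending $r_i\mapsto\infty$ when $cr_i+d=0$ and $\infty\mapsto a/c$) removes the obstruction, and the component-wise computation above goes through verbatim. I do not anticipate any genuine difficulty beyond bookkeeping the idempotent algebra; the substance of the proposition is the change of variables identifying $r_1$ and $r_2$ with the two null components of $z$.
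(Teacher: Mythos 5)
Your proposal is correct, but it reaches the formula by a genuinely different route than the paper. The paper's proof is computational: it rationalises $\widetilde{z}=(az+b)/(cz+d)$ by the split-complex conjugate of the denominator to obtain explicit expressions for $\widetilde{\psi}$ and $\widetilde{s}$ in terms of $\psi$ and $s$ (e.g. $\widetilde{s}=s/\left((c\psi+d)^2-c^2s^2\right)$), and then solves for $(\widetilde{r}_1,\widetilde{r}_2)$ via (\ref{eq: psi s coord}). You instead diagonalise the algebra first: the idempotents $e_{\pm}=\tfrac{1}{2}(1\pm j)$ exhibit the split-complex numbers as $\mathbb{R}\times\mathbb{R}$, the radii of curvature are precisely the two null components of $z$, and since real scalars decompose diagonally, every real fractional linear transformation acts componentwise, making the claimed formula immediate. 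What your approach buys is conceptual clarity and the elimination of the algebraic grind: it explains \emph{why} the M\"obius action is diagonal in $(r_1,r_2)$, and it extends verbatim to any real rational function of $z$. What the paper's computation buys is the explicit transformation law for $\psi$ and $s$ themselves, from which, for instance, the preservation of the umbilic locus $s=0$ is visible at a glance. Your treatment of the zero-divisor locus $cr_i+d=0$ via the extension to $\widehat{\mathbb{R}}\cross\widehat{\mathbb{R}}$ is exactly the content of the remark the paper places immediately after the proposition, so nothing is missing there. One small bookkeeping observation: with the paper's convention $(\psi,s)=\left(\tfrac{r_2+r_1}{2},\tfrac{r_2-r_1}{2}\right)$ the null decomposition is $z=r_2e_{+}+r_1e_{-}$, as you state; the paper's own proof instead writes $\widetilde{r}_1=\widetilde{\psi}+\widetilde{s}$, which is inconsistent with that convention, though harmless because the transformation treats the two components identically --- your version actually gets this detail right.
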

\begin{proof}
Denote the image of $z$ under the fractional linear transformation given in Proposition \ref{prop: isom group action split plane} as $\widetilde{z}$. Denote the real and imaginary parts of $\widetilde{z}$ as $\widetilde{\psi}$ and $\widetilde{s}$ respectively, in accordance with equation (\ref{eq: psi s coord}), $\widetilde{r}_1$ and $\widetilde{r}_2$ are then given by
\begin{align*}
&\widetilde{r}_1=\widetilde{\psi}+\widetilde{s}, & \widetilde{r}_2=\widetilde{\psi}-\widetilde{s}.
\end{align*}
The rest of the proof is now an exercise in writing $\widetilde{r_i}$ in terms of $r_i$, $i=1,2$. In terms of $\psi$ and $s$, we have
\begin{align*}
&\widetilde{\psi} = \mathrm{Re}(\widetilde{z})= \mathrm{Re}\left(\frac{az+b}{cz+d}\right)=\frac{ac(\psi^2-s^2) +(ad+bc)\psi+bd}{(c\psi+d)^2-c^2s^2},
\end{align*}
and
\begin{align*}
&\widetilde{s}=\mathrm{Im}(\widetilde{z})=\mathrm{Im}\left(\frac{az+b}{cz+d}\right)=\frac{s}{(c\psi+d)^2-c^2s^2}.
\end{align*}
Solving for $(\widetilde{r}_1,\widetilde{r}_2)$ in terms of $(r_1,r_2)$ using equation (\ref{eq: psi s coord}) finishes the proof.
\end{proof}
\begin{remark}
The map $T_M$ is extended to all of RoC space, i.e. to $\widehat{\mathbb{R}} \cross \widehat{\mathbb{R}}$ by extending the domain and range of each of its component maps from $\mathbb{R}$ to $\widehat{\mathbb{R}}$ as is often done with fractional linear transformations.
\end{remark}
\begin{remark}\label{rem: prin curv transform}
We will abbreviate by $\widetilde{r}_i$ the image of $r_i$ under a $\slr$ transformation as in Proposition \ref{prop: coordinate transform} when the transformation in question is clear. We let $\widetilde{k_i}=1/\widetilde{r}_i$ for i=1,2, which are the transformed principal curvatures. It is easy to show that
\begin{equation}\label{eq: curv transform in terms of k}
\widetilde{k_i}=\frac{dk_i+c}{bk_i+a}.
\end{equation}
which is the corresponding transformation on curvature space $\mathfrak{F}(\mathcal{S})$ stated in the introduction.
\end{remark}

\vspace{0.1in}
\subsection{Induced Surface Transformations}

The Codazzi-Mainardi equation (\ref{eq: differentiated Codazzi-Mainardi}) will now be used as an integrability condition to find a (possibly non-regular) surface $\widetilde{\mathcal{S}}$ such that $\mathfrak{R}(\widetilde{\mathcal{S}})$ is the image, under a $\slr$ transformation, of an initial RoC diagram $\mathfrak{R}(\mathcal{S})$ for some $\mathcal{S}$.  In what follows let $a,b,c,d$ represent the coefficients of a general $M\in\slr$.
\begin{Lem}\label{prop: param of image under sl2r}
If $\mathcal{J}:I \to \mathbb{R}^2$ is parameterised by $\theta\in I\subseteq[0,\pi]$ and satisfies the Codazzi-Mainardi equation (\ref{eq: differentiated Codazzi-Mainardi}), then if $M\in\slr$, and $r_i \not\equiv -d/c$ for $i=1,2$, $T_M(\mathcal{J}(I))$ can be parameterised to satisfy the Codazzi-Mainardi equation. In particular, if $\widetilde{\theta}$ satisfies
\begin{equation}\label{eq: theta reparam}
\sin\widetilde{\theta}=\mathcal{A}\sin\theta \cdot (cr_1(\theta)+d), \enskip \mathcal{A}\in\mathbb{R}\backslash\{0\},
\end{equation}
then $\widetilde{\theta}$ is such a parametrisation of $T_M(\mathcal{J}(I))$ .

\end{Lem}
\begin{proof}
We have that
\begin{equation}
\label{eq: T_M(J)(I))}
T_M(\mathcal{J}(I))=\left\{(\widetilde{r}_1,\widetilde{r}_2)\in\widehat{\mathbb{R}}\times\widehat{\mathbb{R}}\left| \enskip \widetilde{r}_i =\frac{ar_i+b}{cr_i+d}, \enskip i=1,2, \enskip (r_1,r_2)\in\mathcal{J}(I)\right.\right\}.
\end{equation}
Assume that $\widetilde{\theta}$ parameterises a part of $T_M(\mathcal{J}(I))$ for which $cr_i+d\neq0$. Suppose $\widetilde{\theta}$ satisfies equation (\ref{eq: theta reparam}). Then
\begin{align*}
    \dv{\widetilde{{r}_1}}{\widetilde{\theta}}&= \dv{\theta}{\widetilde{\theta}}\cdot \dv{\widetilde{{r}_1}}{\theta} =\frac{\cos\widetilde{\theta}}{\mathcal{A}\cos\theta\cdot (cr_2+d)}\dv{}{\theta} \left(\frac{ar_1+b}{cr_1+d}\right),
\end{align*}
where we have implicitly differentiated equation (\ref{eq: theta reparam}) by $\theta$ to find $\rm{d}\theta/\rm{d}\widetilde{\theta}$, and used the Codazzi-Mainardi equation to remove any derivatives of $r_1$. A short calculation, removing any derivatives of $r_1$ by Codazzi-Mainardi, and removing occurrences of $\sin \theta$ by equation (\ref{eq: theta reparam}) yields the result
\begin{align*}
    \dv{\widetilde{{r}_1}}{\widetilde{\theta}}=(\widetilde{r}_2-\widetilde{r}_1)\cot\widetilde{\theta}.
\end{align*}
\end{proof}
\begin{remark}
    In fact, away from points of $\mathcal{J}(I)$ for which $r_1=r_2$, parameters $\widetilde{\theta}$ satisfying relation (\ref{eq: theta reparam}) are the only parameters for which $T_M(\mathcal{J}(I))$ satisfies the Codazzi-Mainardi equation. Indeed if
\begin{align*}
    & \dv{{r}_1}{\theta}=(r_2-r_1)\cot\theta, & \text{and} && \dv{\widetilde{{r}_1}}{\widetilde{\theta}}=(\widetilde{r}_2-\widetilde{r}_1)\cot\widetilde{\theta},
\end{align*}
hold on $\mathcal{J}(I)$ and $T_M(\mathcal{J}(I))$ respectively, then the quotient of these equations is a separable ODE which solves to give $\widetilde{\theta}$ implicitly by equation (\ref{eq: theta reparam}).
\end{remark}

We now prove the main theorem of Section \ref{sec: slr trans}, stated in terms of $\mathfrak{R}(\mathcal{S})$ instead of $\mathfrak{F}(\mathcal{S})$ as was done in the introduction.
\begin{Thm}\label{thm: new surface form old with sl2r}
If $T_M$ is a real fractional linear transformation and $\mathcal{S}$ is non-flat, then there exists a rotationally symmetric and possibly non-regular surface $\widetilde{\mathcal{S}}$ such that $\mathfrak{R}(\widetilde{\mathcal{S}})=T(\mathfrak{R}(\mathcal{S}))$.
\end{Thm}
\begin{proof}
Using the notation of \Cref{prop: param of image under sl2r} we first consider the case where $r_i \equiv -d/c$ for $i=1$ or $2$. If $r_1\equiv -d/c$ then the Codazzi-Mainardi equation implies $r_2\equiv -d/c$ in which case $\mathcal{S}$ is a round sphere. Hence $T_M(\mathfrak{R}(\mathcal{S}))=\{(\infty,\infty)\}$ and $\widetilde{\mathcal{S}}$ is a plane. On the other hand if $r_2\equiv -d/c$ and $r_1\not \equiv -d/c$ then $T_M(\mathfrak{R}(\mathcal{S}))=\{(\widetilde{r_1},\infty)\}$ which is the RoC diagram of a cone. Now assume that $r_i \not\equiv -d/c$ for $i=1,2$. If we set $\mathfrak{R}(\mathcal{S})=\mathcal{J}(I)$, the curve $T_M(\mathcal{J}(I))$ has a parametrisation satisfying Codazzi-Mainardi, and so by Proposition \ref{prop: curve and CM iff surface} corresponds to the RoC diagram of a $C^2$-smooth surface of revolution with the claimed parametrisation. For each value of $\mathcal{A}\neq 0$, we get a different surface with $\mathfrak{R}(\widetilde{\mathcal{S}}_\mathcal{A})=T_M(\mathcal{J}(I))$. We also remark that these surfaces may be non-regular since if $\mathcal{S}$ has a point for which $r_i=-b/a$, then $\widetilde{r_i}=0$ and $\widetilde{\mathcal{S}}$ will have a cusp.
\end{proof}
The surfaces generated by integrating the image of a $\slr$ transformation can be understood in $\mathbb{E}^3$ by the following relations
\begin{Prop}\label{prop: transformations on theta and rho}
Let $\widetilde{\mathcal{S}}$ be the image of $\mathcal{S}$, non-flat, as described in Theorem \ref{thm: new surface form old with sl2r}. Let $\rho(\theta)$ and $\widetilde{\rho}(\widetilde{\theta})$  be the distances from the axis of rotation of $\mathcal{S}$ and $\widetilde{\mathcal{S}}$ respectively. Then
\begin{align}\label{eq: transformation equation}
&\sin\widetilde{\theta}=\mathcal{A}\left(c\rho(\theta)+d\sin\theta\right), &\widetilde{\rho}(\widetilde{\theta})=\mathcal{A}\left(a\rho(\theta)+b\sin\theta\right).
\end{align}
\end{Prop}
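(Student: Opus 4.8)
The plan is to derive both identities by direct substitution, using only the structural relation $\rho = r_1\sin\theta$ from equation (\ref{eq: rho in terms of r1}), the reparametrisation relation (\ref{eq: theta reparam}) from Lemma \ref{prop: param of image under sl2r}, and the transformation formula for the radii of curvature from Proposition \ref{prop: coordinate transform}. No new analytic machinery is needed; the content is a cancellation.

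First I would establish the left-hand identity. By Lemma \ref{prop: param of image under sl2r}, the surfaces $\mathcal{S}$ and $\widetilde{\mathcal{S}}$ are related by
\[
\sin\tilde\theta=\mathcal{A}\sin\theta\,(c\,r_1(\theta)+d).
\]
Distributing the product and recognising $\rho=r_1\sin\theta$ from equation (\ref{eq: rho in terms of r1}), the term $c\,r_1\sin\theta$ becomes $c\rho$, which immediately yields $\sin\tilde\theta=\mathcal{A}(c\rho+d\sin\theta)$. This is the first claimed relation.

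For the right-hand identity, the key observation is that $\widetilde{\mathcal{S}}$ is itself rotationally symmetric by Theorem \ref{thm: new surface form old with sl2r}, so equation (\ref{eq: rho in terms of r1}) applies to it with $\tilde\theta$ as its Gauss angle, giving $\widetilde{\rho}=\widetilde{r}_1\sin\tilde\theta$. I would then substitute $\widetilde{r}_1=(a\,r_1+b)/(c\,r_1+d)$ from Proposition \ref{prop: coordinate transform} together with (\ref{eq: theta reparam}); the factor $(c\,r_1+d)$ cancels against the denominator, leaving $\widetilde{\rho}=\mathcal{A}\sin\theta\,(a\,r_1+b)$, and invoking $\rho=r_1\sin\theta$ once more delivers $\widetilde{\rho}=\mathcal{A}(a\rho+b\sin\theta)$.

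There is essentially no obstacle here. The only point requiring care is the justification that $\widetilde{\rho}=\widetilde{r}_1\sin\tilde\theta$ holds on the image surface; this rests entirely on $\widetilde{\mathcal{S}}$ being a genuine rotationally symmetric $C^2$-surface admitting $\tilde\theta$ as Gauss angle, which is exactly what Theorem \ref{thm: new surface form old with sl2r} supplies. I would also flag the domain caveat inherited from Lemma \ref{prop: param of image under sl2r}, namely that these relations are asserted away from flat points and on the region where $c\,r_1+d\neq 0$, so that the cancellation is legitimate.
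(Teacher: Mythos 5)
Your proposal is correct and is essentially identical to the paper's own proof: both derive the first identity by distributing $\mathcal{A}\sin\theta\,(cr_1+d)$ from equation (\ref{eq: theta reparam}) and substituting $\rho=r_1\sin\theta$, and both obtain the second by writing $\widetilde{\rho}=\widetilde{r}_1\sin\tilde\theta$, inserting $\widetilde{r}_1=(ar_1+b)/(cr_1+d)$ from Proposition \ref{prop: coordinate transform}, and cancelling the factor $(cr_1+d)$. Your explicit remarks on why $\widetilde{\rho}=\widetilde{r}_1\sin\tilde\theta$ is legitimate on the image surface and on the non-degeneracy caveat are points the paper leaves implicit, but they do not change the argument.
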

\begin{proof}
The first equation follows directly from \Cref{prop: param of image under sl2r} and equation (\ref{eq: theta reparam}) by using the relation $\rho=r_1\sin\theta$. The second follows from
\begin{align*}
    \widetilde{\rho}(\widetilde{\theta})&=\widetilde{
    r}_1\sin\widetilde{\theta},\\
    &=\left(\frac{ar_1+b}{cr_1+d}\right) \cdot \mathcal{A}\sin\theta \cdot (cr_1+d),\\
    &=\mathcal{A}\left(a\rho(\theta)+b\sin\theta\right).
\end{align*}
\end{proof}
The constant $\mathcal{A}$ controls the speed with which $T_M(\mathcal{J}(I))$ is parameterised with respect to $\widetilde{\theta}$. Hence a fixed $\slr$ transformation induces a transformation in $\mathbb{E}^3$ of surfaces for each $\mathcal{A}\backslash\{0\}$ giving rise to different surfaces with a different range of Gauss angles $\widetilde{\theta} \in \tilde I \subset [0,\pi]$. In fact for a general $\mathcal{S}$, the map $\theta \mapsto \widetilde{\theta}(\theta)$ given by equation (\ref{eq: transformation equation}) will not always be well defined since the RHS may not lie between $1$ and $-1$ for all $\theta \in I$. If $\rho$ is bounded however, setting \[
\mathcal{A}= \left(\max\limits_{\mathcal{S}}{|c\rho+d\sin\theta|}\right)^{-1},
\]
ensures that the surface transformation can be defined for all $\theta \in I$. We now decompose the $\slr$ transformations into a composition of simpler transformations which have a clearer geometric interpretation. Consider the following subgroups of $\slr$
\begin{align*}
    &N=\left\{\begin{pmatrix}1 & v \\ 0 & 1 \end{pmatrix}: v\in \mathbb{R}\right\} &\text{and}
&&A=\left\{\begin{pmatrix}\omega & 0 \\ 0 & \frac{1}{\omega} \end{pmatrix}: \omega \neq 0\right\}.
\end{align*}
For these subgroups there is a natural choice of $\mathcal{A}$ which allows $\mathcal{S}$ and $\widetilde{\mathcal{S}}$ to be parameterised by the same Gauss angle, furthermore with this choice of $\mathcal{A}$ the induced transformations on surfaces are very geometric.
\begin{Prop}\label{prop: N is par translations}
The induced transformations of a surface in $\mathbb{E}^3$ by the subgroup $N$ with $\mathcal{A}=1$ are parallel translations.
\end{Prop}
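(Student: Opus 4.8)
The plan is to specialise the general formulas of \Cref{prop: coordinate transform} and \Cref{prop: transformations on theta and rho} to the subgroup $N$ and then read off the geometric meaning of the resulting surface transformation in cylindrical coordinates. For $M\in N$ one has $a=d=1$, $b=v$, $c=0$, so the coordinate map $T_M$ of \Cref{prop: coordinate transform} collapses to $\widetilde{r}_1=r_1+v$ and $\widetilde{r}_2=r_2+v$; that is, both radii of curvature are translated by the common amount $v$. This already matches the classical behaviour of the principal radii of curvature under a normal offset, which is the first hint of the answer.

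Next I would substitute $c=0$, $d=1$ and $\mathcal{A}=1$ into equation (\ref{eq: transformation equation}). The first relation becomes $\sin\widetilde\theta=\sin\theta$, so the Gauss angle is preserved and both $\mathcal{S}$ and $\widetilde{\mathcal{S}}$ may be parameterised by the same $\theta$; this is precisely why $\mathcal{A}=1$ is the natural normalisation for $N$. The second relation then gives $\widetilde\rho=\rho+v\sin\theta$.

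It remains to recover the height function $\widetilde h$. Applying the relation $\td h/\td\theta=-r_2\sin\theta$ from (\ref{eq: rho in terms of r1}) to $\widetilde{\mathcal{S}}$, together with $\widetilde\theta=\theta$ and $\widetilde r_2=r_2+v$, yields $\td\widetilde h/\td\theta=-(r_2+v)\sin\theta=\td h/\td\theta-v\sin\theta$, which integrates to $\widetilde h=h+v\cos\theta$ after absorbing the additive constant into the freedom of translation along the axis of symmetry (recall the surface is determined by its RoC data only up to such a translation). Thus in the $x^2x^3$-plane a profile point $(\rho,h)$ is sent to $(\rho+v\sin\theta,\,h+v\cos\theta)$.

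Finally I would interpret this displacement. The oriented unit normal at a profile point is $\hat n=(\sin\theta,\cos\theta)$ in $(x^2,x^3)$-coordinates, this being the unit vector making angle $\theta$ with the $x^3$-axis and orthogonal to the tangent $(\cos\theta,-\sin\theta)$ fixed by (\ref{eq: rho and h slope is -tan}). Hence $(\widetilde\rho,\widetilde h)=(\rho,h)+v\,\hat n$, so every point of $\mathcal{S}$ is moved a fixed signed distance $v$ along its oriented normal line; revolving about the axis, $\widetilde{\mathcal{S}}$ is exactly the normal-offset (parallel) surface of $\mathcal{S}$, equivalently a parallel translation of each oriented normal line along itself. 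The computation is a direct substitution; the one step requiring care is fixing the orientation convention so that $\hat n$ is genuinely $(\sin\theta,\cos\theta)$, and thereby confirming that the normal offset by $v$ is consistent with the shift $\widetilde r_i=r_i+v$ of the radii of curvature obtained in the first step.
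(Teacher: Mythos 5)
Your proposal is correct and follows essentially the same route as the paper: specialise \Cref{prop: transformations on theta and rho} with $\mathcal{A}=1$ to get $\tilde\theta=\theta$, $\widetilde\rho=\rho+v\sin\theta$, deduce $\widetilde h=h+v\cos\theta$, and identify the displacement $(v\sin\theta,v\cos\theta)$ as $v$ times the unit normal. The only cosmetic difference is that you obtain $\widetilde h$ by integrating $\td\widetilde h/\td\tilde\theta=-\widetilde r_2\sin\tilde\theta$ (absorbing the integration constant into the axial-translation freedom), whereas the paper reads it off directly from equations (\ref{eq: rho, h in terms of r}); both are legitimate and equivalent.
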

\begin{proof}
Let the transformation $T_M$, $M=\begin{pmatrix}1 & v \\ 0 & 1 \end{pmatrix}\in N$ for $v\in \mathbb{R}$ act on RoC space. Then we have by Proposition \ref{prop: transformations on theta and rho}
\begin{align*}
&\sin\widetilde{\theta}=\mathcal{A}\sin\theta, &\widetilde{\rho}(\widetilde{\theta})=\mathcal{A}\left(\rho(\theta)+v\sin\theta\right).
\end{align*}
Therefore taking $\mathcal{A}=1$, we can take $\widetilde{\theta}=\theta$ and $\widetilde{\rho}=\rho+v\sin\theta$, furthermore by equations (\ref{eq: rho, h in terms of r}) we find $\widetilde{h}=h+v\cos\theta$. Since $\sin\theta$ and $\cos\theta$ are the radial and axial components of the unit normal vector of $\mathcal{S}$ at a point with Gauss angle $\theta$, the transformation translates every point of $\mathcal{S}$ a distance $v$ in the normal direction.
\end{proof}
\begin{Prop}\label{prop: A is homothety}
The action on a surface in $\mathbb{E}^3$ of the subgroup $A$ with $\mathcal{A}=\omega$ is homothety.
\end{Prop}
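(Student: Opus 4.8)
The plan is to specialise \Cref{prop: transformations on theta and rho} to the diagonal matrix $M=\begin{pmatrix}\omega & 0 \\ 0 & 1/\omega\end{pmatrix}\in A$, so that $a=\omega$, $b=c=0$ and $d=1/\omega$, and then to check that the induced map on $\mathbb{E}^3$ fixes the Gauss angle and scales both cylindrical coordinates by the same factor. First I would substitute these coefficients into the transformation equations (\ref{eq: transformation equation}), obtaining $\sin\tilde\theta=\mathcal{A}\,\omega^{-1}\sin\theta$ and $\widetilde{\rho}(\tilde\theta)=\mathcal{A}\,\omega\,\rho(\theta)$. Taking $\mathcal{A}=\omega$ as in the statement then gives $\sin\tilde\theta=\sin\theta$, so that, selecting the same branch as in \Cref{prop: N is par translations}, we may take $\tilde\theta=\theta$, while $\widetilde{\rho}=\omega^2\rho$. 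Thus the radial cylindrical coordinate is dilated by the constant factor $\omega^2$ and the Gauss angle is preserved.

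The second step is to recover the axial coordinate $\widetilde{h}$, which \Cref{prop: transformations on theta and rho} does not supply directly. Since the Gauss angles agree, I would first read off the image of the profile radius of curvature from \Cref{prop: coordinate transform}, which for this $M$ reads $\widetilde{r}_2=(ar_2+b)/(cr_2+d)=\omega^2 r_2$. Inserting this into the second relation of (\ref{eq: rho in terms of r1}) and using $\tilde\theta=\theta$ gives $\dv{\widetilde{h}}{\theta}=-\widetilde{r}_2\sin\tilde\theta=\omega^2\left(-r_2\sin\theta\right)=\omega^2\dv{h}{\theta}$. Integrating in $\theta$ yields $\widetilde{h}=\omega^2 h + c_0$ for some constant $c_0$, so the axial coordinate is also scaled by $\omega^2$, up to a translation along the axis of rotation. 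One checks the same constant $c_0$ arises if instead one reconstructs the support function via (\ref{eq: rho, h in terms of r}); it is precisely the integration-constant freedom of (\ref{eq: support from r1}).

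Finally I would reassemble the map in Cartesian coordinates. Because the construction of \Cref{thm: new surface form old with sl2r} preserves rotational symmetry, the azimuthal angle $\phi$ is unchanged, so a point $(\rho\sin\phi,\rho\cos\phi,h)$ of $\mathcal{S}$ is sent to $(\omega^2\rho\sin\phi,\omega^2\rho\cos\phi,\omega^2 h+c_0)$. This is exactly the dilation $\vec{X}\mapsto\omega^2\vec{X}$ composed with an axial translation, hence a homothety of ratio $\omega^2$ centred at a point of the axis of rotation. The only genuinely delicate point is the axial coordinate, since \Cref{prop: transformations on theta and rho} delivers $\tilde\theta$ and $\widetilde{\rho}$ but not $\widetilde{h}$; the heart of the argument is therefore the short integration of $\dv{\widetilde{h}}{\theta}=\omega^2\,\dv{h}{\theta}$ together with the correct bookkeeping of the constant $c_0$, which records the same along-axis freedom already seen in the $N$ case. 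Everything else is immediate substitution.
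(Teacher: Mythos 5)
Correct, and essentially the paper's own argument: you specialise \Cref{prop: transformations on theta and rho} to the diagonal matrix, take $\mathcal{A}=\omega$ to obtain $\tilde\theta=\theta$ and $\widetilde{\rho}=\omega^2\rho$, and conclude a homothety of ratio $\omega^2$. The only difference is cosmetic: the paper recovers $\widetilde{h}=\omega^2 h$ directly from the support-function relations (\ref{eq: rho, h in terms of r}) (implicitly scaling $r\mapsto\omega^2 r$, which fixes the integration constant and centres the homothety at the origin), whereas you integrate $\dv{\widetilde{h}}{\theta}=-\widetilde{r}_2\sin\tilde\theta$ from (\ref{eq: rho in terms of r1}) and carry the axial constant $c_0$ explicitly --- both computations are one line, and your bookkeeping of $c_0$ as the translation-along-axis freedom matches the paper's convention.
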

\begin{proof}
Proceeding as in the proof of Proposition \ref{prop: N is par translations}, let $M=\begin{pmatrix}\omega & 0 \\ 0 & \frac{1}{\omega} \end{pmatrix}\in A$, $\omega \neq 0$. Then $\widetilde{\theta}$ and $\widetilde{\rho}$ satisfy
\begin{align*}
&\sin\widetilde{\theta}=\frac{\mathcal{A}}{\omega}\sin\theta, &\widetilde{\rho}(\widetilde{\theta})=\omega\mathcal{A}\rho(\theta).
\end{align*}
Taking $\mathcal{A}=\omega$, gives $\widetilde{\theta}=\theta$, $\widetilde{\rho}=\omega^2\rho$ and again by equations (\ref{eq: rho, h in terms of r}) $\widetilde{h}=\omega^2h$. Hence the transformation is a homothety with a scale factor of $\omega^2$.
\end{proof}
Elements of $\slr$ have the following decomposition, if $c=0$ the general element can be written as
\begin{equation}
\begin{pmatrix} a & b \\ 0 &\td \end{pmatrix}=\begin{pmatrix}1 & ba \\ 0 & 1 \end{pmatrix}\begin{pmatrix}a & 0 \\ 0 & 1/a \end{pmatrix},
 \end{equation}
on the other hand if $c\neq 0$ then
 \begin{equation}
 \begin{pmatrix} a & b \\ c &\td \end{pmatrix}=\begin{pmatrix}1 & a/c \\ 0 & 1 \end{pmatrix}\begin{pmatrix}1/c & 0 \\ 0 & c \end{pmatrix}\begin{pmatrix}0 & -1 \\ 1 & 0 \end{pmatrix} \begin{pmatrix}1 & d/c \\ 0 & 1 \end{pmatrix}.
 \end{equation}
Hence any element of $\slr$ can be constructed from composition of elements from $A$, $N$ and the matrix
\begin{equation}
    Q=\begin{pmatrix}0 & -1 \\ 1 & 0 \end{pmatrix},
\end{equation}
\noindent We now describe the action of $Q$ in more detail. As a corollary of Proposition \ref{prop: transformations on theta and rho} we have

\begin{Prop}\label{prop: reciprocal trm}
The matrix $Q$ induces a 1-parameter family of transformations mapping $\mathcal{S}$ to a surface of revolution $\widetilde{\mathcal{S}}_\mathcal{A}$ satisfying
\begin{align}\label{eq: Q transformation theta and rho}
& \sin \widetilde{\theta}=\mathcal{A}\rho(\theta),
&\widetilde{\rho}(\widetilde{\theta})=-\mathcal{A} \sin \theta.
\end{align}
\end{Prop}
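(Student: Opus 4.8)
The plan is to obtain this as an immediate specialisation of Proposition~\ref{prop: transformations on theta and rho}, exactly as the phrase preceding the statement (``As a corollary of Proposition~\ref{prop: transformations on theta and rho}'') signals. First I would record that $Q$ has entries $a=0$, $b=-1$, $c=1$, $d=0$, and check that $\det Q=(0)(0)-(-1)(1)=1$, so that $Q\in\slr$ and the machinery of the preceding results applies. Since $\mathcal{S}$ is non-flat, Theorem~\ref{thm: new surface form old with sl2r} guarantees that for each admissible value of the parameter $\mathcal{A}\neq 0$ the image $T_Q(\mathfrak{R}(\mathcal{S}))$ is the RoC diagram of a rotationally symmetric surface $\widetilde{\mathcal{S}}_\mathcal{A}$; letting $\mathcal{A}$ range over $\mathbb{R}\backslash\{0\}$ furnishes the claimed $1$-parameter family.

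With $\widetilde{\mathcal{S}}_\mathcal{A}$ in hand, the transformation equations follow by direct substitution into the two relations of Proposition~\ref{prop: transformations on theta and rho},
\begin{align*}
&\sin\tilde\theta=\mathcal{A}\left(c\rho(\theta)+d\sin\theta\right), &\widetilde{\rho}(\tilde\theta)=\mathcal{A}\left(a\rho(\theta)+b\sin\theta\right).
\end{align*}
Plugging in $a=0$, $b=-1$, $c=1$, $d=0$ collapses the first to $\sin\tilde\theta=\mathcal{A}\rho(\theta)$ and the second to $\widetilde{\rho}(\tilde\theta)=-\mathcal{A}\sin\theta$, which is precisely the asserted display~(\ref{eq: Q transformation theta and rho}). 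As a consistency check one may note that $\widetilde{r}_i=(ar_i+b)/(cr_i+d)=-1/r_i$, which explains why $Q$ deserves the name \emph{reciprocal transformation}.

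Because the computation is a one-line substitution, there is no serious analytic obstacle; the only point requiring care is the well-definedness of the reparametrisation $\theta\mapsto\tilde\theta$. The relation $\sin\tilde\theta=\mathcal{A}\rho(\theta)$ can only be inverted for $\tilde\theta\in[0,\pi]$ where $|\mathcal{A}\rho(\theta)|\le 1$. As already discussed after Proposition~\ref{prop: transformations on theta and rho}, when $\rho$ is bounded one may choose $\mathcal{A}=\big(\max_{\mathcal{S}}|\rho|\big)^{-1}$ to ensure the transformation is defined on the whole range of $\theta$, while smaller values of $\mathcal{A}$ restrict the admissible Gauss angles but still yield a bona fide surface via Theorem~\ref{thm: new surface form old with sl2r}. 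I would emphasise that this freedom in $\mathcal{A}$ is exactly the mechanism producing the $1$-parameter family, and that the sign in $\widetilde{\rho}=-\mathcal{A}\sin\theta$ is immaterial since $\rho$ is a signed radial coordinate.
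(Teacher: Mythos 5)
Your proof is correct and takes essentially the same approach as the paper, which states this result without a separate proof precisely because it is the immediate specialisation of Proposition~\ref{prop: transformations on theta and rho} to $a=0$, $b=-1$, $c=1$, $d=0$ that you carry out. Your supplementary remarks (the determinant check, the existence of $\widetilde{\mathcal{S}}_\mathcal{A}$ via Theorem~\ref{thm: new surface form old with sl2r}, the choice of $\mathcal{A}$ ensuring $|\mathcal{A}\rho|\le 1$, and the action $(r_1,r_2)\mapsto(-1/r_1,-1/r_2)$ justifying the name) all match the paper's surrounding discussion.
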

Unlike with elements in the subgroups $A$ and $N$ there is no obvious canonical choice of $\mathcal{A}$ to associate with $Q$ in the case of a general $\mathcal{S}$. If $\mathcal{S}$ is a $C^2$-smooth, closed, strictly convex and regular surface, we have the following.
\begin{Prop}
 If $\mathcal{S}$ is a $C^2$-smooth, closed, strictly convex and regular surface, then with $\mathcal{A}=\rho\left(\pi/2\right)^{-1}$, the transformation induced by $Q$ sends $\mathcal{S}$ to a closed, strictly convex and regular surface, $\widetilde{\mathcal{S}}$.
\end{Prop}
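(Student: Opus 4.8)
The plan is to read off the induced surface transformation from \Cref{prop: reciprocal trm} and \Cref{prop: coordinate transform}, and to verify the two adjectives---convexity and closedness---separately, since they draw on quite different features of $\mathcal{S}$. Specialising \Cref{prop: coordinate transform} to the matrix $Q$ gives $\widetilde{r}_i=-1/r_i$, while \Cref{prop: reciprocal trm} records the ambient data $\sin\tilde\theta=\mathcal{A}\rho(\theta)$ and $\widetilde\rho(\tilde\theta)=-\mathcal{A}\sin\theta$. The existence of $\widetilde{\mathcal{S}}$ as a $C^2$-smooth rotationally symmetric surface is already supplied by \Cref{thm: new surface form old with sl2r}, so the only task is to establish that it is closed and strictly convex for the stated choice $\mathcal{A}=\rho(\pi/2)^{-1}$.

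Convexity is the easy half and is purely pointwise. Since $\mathcal{S}$ is closed and strictly convex, its principal curvatures are bounded between two positive constants, so $r_1$ and $r_2$ take values in a compact subinterval $[r_{\min},r_{\max}]\subset(0,\infty)$. Consequently $\widetilde{r}_i=-1/r_i$ lies in $[-1/r_{\min},-1/r_{\max}]$: both transformed radii are finite, nonzero, and of the same (negative) sign, so $\widetilde{K}=(\widetilde{r}_1\widetilde{r}_2)^{-1}>0$ everywhere. After reversing orientation if necessary (the negative sign of $\widetilde\rho$ and of the $\widetilde{r}_i$ reflects only the choice of normal), $\widetilde{\mathcal{S}}$ is strictly convex.

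For closedness the first thing I would establish is that $\rho(\pi/2)$ really is the maximum of $\rho$, so that $\mathcal{A}$ is well defined and $\sin\tilde\theta=\mathcal{A}\rho(\theta)$ never exceeds $1$. Differentiating $\rho=r_1\sin\theta$ and substituting the Codazzi-Mainardi equation (\ref{eq: differentiated Codazzi-Mainardi}) produces the clean identity
\begin{equation*}
\dv{\rho}{\theta}=r_2\cos\theta ,
\end{equation*}
so strict convexity ($r_2>0$) forces $\rho$ to increase on $(0,\pi/2)$ and decrease on $(\pi/2,\pi)$, with a single maximum at $\theta=\pi/2$. Hence $\mathcal{A}\rho(\theta)\in[0,1]$ with equality only at the equator, and $\tilde\theta$ is defined for all $\theta\in[0,\pi]$. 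I would then argue that $\theta\mapsto\tilde\theta$ is a strictly increasing bijection of $[0,\pi]$ onto itself: the poles $\theta=0,\pi$ (where $\rho=0$) map to $\tilde\theta=0,\pi$ (where $\widetilde\rho=0$), and these images are strictly convex umbilic points since $\widetilde{r}_i=-1/r_0$ stays finite there; differentiating the reparametrisation relation gives $\cos\tilde\theta\,\dv{\tilde\theta}{\theta}=\mathcal{A}r_2\cos\theta$, whose sign yields $\dv{\tilde\theta}{\theta}>0$ separately on $(0,\pi/2)$ and on $(\pi/2,\pi)$. Surjectivity of $\tilde\theta$ onto $[0,\pi]$ then makes $\widetilde{\mathcal{S}}$ a closed surface.

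The main obstacle is the equator $\theta=\pi/2$, where $\sin\tilde\theta=1$ and both $\cos\theta$ and $\cos\tilde\theta$ vanish, so the formula for $\dv{\tilde\theta}{\theta}$ is an indeterminate $0/0$; one must check that $\tilde\theta$ passes monotonically \emph{through} $\pi/2$ rather than turning back. I would resolve this with a second-order expansion: since $\dv[2]{\rho}{\theta}\big|_{\pi/2}=-r_2(\pi/2)<0$, the function $\mathcal{A}\rho$ attains a nondegenerate maximum at the equator, and matching this against $\sin\tilde\theta=\cos(\tilde\theta-\pi/2)$ shows that $\tilde\theta-\pi/2$ is asymptotically a positive multiple of $\theta-\pi/2$, so $\dv{\tilde\theta}{\theta}$ extends continuously to the finite positive limit $\sqrt{\mathcal{A}\,r_2(\pi/2)}$. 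This confirms that the reparametrisation is regular and strictly increasing across the equator, which is exactly what is needed to conclude that $\tilde\theta$ sweeps all of $[0,\pi]$ and hence that $\widetilde{\mathcal{S}}$ is closed.
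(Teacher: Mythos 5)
Your proof is correct, and it shares the paper's key analytic ingredient: both arguments come down to the second-order expansion of $\rho$ at the equator, using $\dv[2]{\rho}{\theta}\big|_{\pi/2}=-r_2(\pi/2)<0$, which follows from (\ref{eq: drho/dtheta in terms of r2}). Where you genuinely diverge is in what closedness is reduced to. The paper works on the profile curve directly: it writes the height function of $\widetilde{\mathcal{S}}$ as the integral (\ref{eq: h tilde under reciprocal transform}) over $\theta$ and uses the Taylor expansion to show that the integrand is bounded and continuous at $\theta=\pi/2$ (with limit $\sqrt{K(\pi/2)}$), so the profile curve is a continuous image of the compact set $[0,\pi]$, and is boundary-free since $\tilde\rho\to0$ at the poles. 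You instead prove that the reparametrisation $\theta\mapsto\tilde\theta$ of (\ref{eq: bijective re-param}) is a strictly increasing $C^1$ bijection of $[0,\pi]$, with the indeterminacy at the equator resolved by showing $\dv{\tilde\theta}{\theta}\to\sqrt{\mathcal{A}\,r_2(\pi/2)}>0$, and then conclude from surjectivity of the Gauss angle. Your route makes the regularity of the angle change explicit (the paper only obtains it implicitly, inside the integrand of $\tilde h$), and your separate convexity paragraph is a welcome addition, since the paper leaves strict convexity of $\widetilde{\mathcal{S}}$ implicit in the remark that the radii of curvature are bounded and non-zero. The one soft spot is your last inference, ``surjectivity of $\tilde\theta$ onto $[0,\pi]$ makes $\widetilde{\mathcal{S}}$ closed'': this silently uses that the height of $\widetilde{\mathcal{S}}$ is finite and continuous, which is precisely what the paper verifies. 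The gap is easily closed with what you already have---by (\ref{eq: rho in terms of r1}) applied to $\widetilde{\mathcal{S}}$, $\dv{\tilde h}{\tilde\theta}=-\tilde{r}_2\sin\tilde\theta$ is bounded because your convexity step shows $\tilde{r}_2=-1/r_2$ is bounded, so $\tilde h$ is Lipschitz in $\tilde\theta$ and the profile curve is indeed a compact arc capping off at the poles---but the sentence should be there.
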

\begin{proof}
Take $\theta\in [0,\pi]$. Since $\mathcal{S}$ is strictly convex and regular, $r_1$ and $r_2$ are bounded, have the same sign and are non-zero, hence by continuity do not change sign on $\mathcal{S}$. Furthermore from relations (\ref{eq: rho in terms of r1}) and (\ref{eq: rho and h slope is -tan}) we see
\begin{equation}\label{eq: drho/dtheta in terms of r2}
\dv{\rho}{\theta}=r_2\cos\theta,
\end{equation}
implying $\theta=\pi/2$ is a stationary point of $\rho$. Furthermore by taking a difference quotient of the above equation we see that $\rho''(\pi/2)=-r_2(\pi/2)\neq0$ hence $\rho$ attains a local maxima (or minima) at $\theta=\pi/2$, depending on if $r_2(\theta)$ is positive (or negative) on $\mathcal{S}$. However $\rho$ takes the same sign as $r_1$ when $\theta\in[0,\pi]$ (via equation \ref{eq: rho in terms of r1}) which takes the same sign as $r_2$, thus $\theta=\pi/2$ is a maxima when $\rho(\theta)>0$ on $[0,\pi]$ and a minima when $\rho<0$ on $[0,\pi]$. Hence $|\rho|$ attains a maximum of $|\rho(\pi/2)|$ implying $-1\leq\frac{\rho(\theta)}{\rho(\pi/2)}\leq 1$. Letting $\mathcal{A}=\rho(\pi/2)^{-1}$ we can define the bijective and continuous map $\theta \mapsto \widetilde{\theta}$ satisfying equation (\ref{eq: Q transformation theta and rho}):
\begin{align}\label{eq: bijective re-param}
&\widetilde{\theta}(\theta)=
\begin{cases}
\sin^{-1}\left(\frac{\rho(\theta)}{\rho(\pi/2)}\right), &0\leq \theta \leq \frac{\pi}{2} \\ \pi-\sin^{-1}\left(\frac{\rho(\theta)}{\rho(\pi/2)}\right), &\frac{\pi}{2} \leq \theta \leq \pi 
\end{cases}.
\end{align}
By Proposition \ref{thm: new surface form old with sl2r}, $\widetilde{\theta}$ is the Gauss angle of a surface of revolution $\widetilde{\mathcal{S}}$. Note that since for $i=1,2$, $r_i$ is bounded, $\widetilde{r_i}$ is not zero, hence $\widetilde{\mathcal{S}}$ is regular. We now show that the profile curve of $\widetilde{\mathcal{S}}$ is the continuous image of the compact set $[0,\pi]$. It then follows that $\widetilde{\mathcal{S}}$ is compact.  Note that from relation (\ref{eq: Q transformation theta and rho}), $\tilde{\rho}$ is a bounded and continuous function of $\theta$ and from relations (\ref{eq: rho in terms of r1}),(\ref{eq: Q transformation theta and rho}) and the Codazzi-Mainardi equation, up to the addition of some constant of integration,
\begin{align}\label{eq: h tilde under reciprocal transform}
\tilde h &=-\int \tilde r_2 \sin \widetilde{\theta} \td \widetilde{\theta}=\int\frac{1}{\rho(\pi/2)}\frac{\rho(\theta)}{r_2(\theta)}\dv{\widetilde{\theta}}{\theta}\td \theta=\int\frac{\sgn(\pi/2-\theta)\rho(\theta) \cos \theta }{|\rho(\pi/2)|\sqrt{\rho(\pi/2)^2-\rho(\theta)^2}}\td \theta,
\end{align}
where $\td \widetilde{\theta}/ \td \theta$ is calculated from equation (\ref{eq: bijective re-param}). Hence to show $\tilde h$ is continuous on $[0,\pi]$, it is shown that the integrand in equation (\ref{eq: h tilde under reciprocal transform}) is continuous and bounded, in particular at $\theta = \pi/2$. Since $\rho$ is $C^2$ in the variable $\theta$, equation (\ref{eq: drho/dtheta in terms of r2}) Taylor's theorem with remainder $\omega$ gives;
\[
\rho(\theta)=\rho\left(\frac{\pi}{2}\right)+ \frac{1}{2}\rho''\left(\frac{\pi}{2}\right)\left(\theta-\frac{\pi}{2}\right)^2+\omega(\theta)\left(\theta-\frac{\pi}{2}\right)^2,
\]
where $\omega(\theta)\to 0$ as $\theta \to \pi/2$. This implies the asymptotic behaviour of the integrand 
\begin{align*}
\frac{\sgn(\pi/2-\theta)\rho(\theta) \cos \theta }{|\rho(\pi/2)|\sqrt{\rho(\pi/2)^2-\rho(\theta)^2}}\to \sgn(\rho(\pi/2))\sqrt{K(\pi/2)},
\end{align*}
as $\theta \to \pi/2$, where $K$ is the Gauss curvature of $\mathcal{S}$. Hence the profile curve of $\widetilde{\mathcal{S}}$ is the continuous image of the set $[0,\pi]$. Since $\tilde{\rho}\to 0$ as $\theta \to 0,\pi$, we have that $\widetilde{\mathcal{S}}$ is without boundary and therefore closed.
\end{proof}
The surface transformations induced by $Q$ are called \textit{reciprocal transformations} as their action on RoC space is $(r_1,r_2)\mapsto (-1/r_1,-1/r_2)$. The action of the reciprocal transformations on a surface in $\mathbb{E}^3$ may be understood as exchanging the functions $\sin\theta$ and $\rho(\theta)$, up to a scalar multiple, as illustrated by Figure \ref{fig: illust of recip maps}.
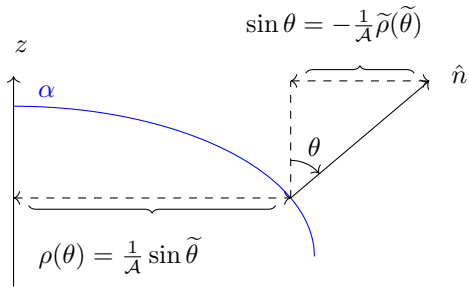
\begin{figure}[!h]
\centering
\begin{tikzpicture}[scale=2]
\draw [->] (0,-0.2) -- (0,1.2);
\draw (0,1.2)++(0.05,0.2) node {$z$};
\draw[color=blue,domain=0:3.141/2,smooth,variable=\x]
  plot ({2*sin(\x r)},{cos(\x r)});
\draw (0.21932,1.1) node {$\color{blue}\alpha$};
\coordinate (P) at (1.8421,0.3894);
\draw[->] (P) -- ++(.9211,0.7788) coordinate (n);
\draw (n) ++(0.2,0.05) node {$\hat{n}$};
\draw[dashed,-] (P) -- (n -| P) coordinate (N);
\pic[draw, <-, "$\theta$", angle eccentricity=1.5] {angle = n--P--N};
\draw[<->,dashed] (N)--(n);
\draw[decoration={brace,amplitude=1mm,mirror,raise=1mm},decorate] ([xshift = -0.1cm]n) -- node [above=4mm] {\parbox[t]{0.2\textwidth}{
                   $\sin \theta=-\frac{1}{\mathcal{A}}\widetilde{\rho}(\widetilde{\theta})$}} ([xshift = 0.1cm]N);
\draw[<->,dashed] (P -| {{(0,0)}}) -- (P);
\draw[decoration={brace,amplitude=1mm,mirror,raise=1mm},decorate] (P -| {{(0.1,0)}}) -- node [below=4mm] {\parbox[t]{0.2\textwidth}{
                   $\rho(\theta)=\frac{1}{\mathcal{A}}\sin \widetilde{\theta}$}} ([xshift = -0.1cm]P);
\end{tikzpicture}
\caption{A part of a profile curve $\alpha$ of a closed convex surface. The values of $\rho$ and $\theta$ determine the values of $\widetilde{\rho}$ and $\widetilde{\theta}$ of $\alpha$'s image under a reciprocal transformation as depicted.}\label{fig: illust of recip maps}
\end{figure}
The following theorem ties together the results of this section.
\begin{Thm}\label{thm: slr decomposition}
Any surface transformation induced by an element of $\slr$
is a composition of parallel translations, homotheties and reciprocal mappings.
\end{Thm}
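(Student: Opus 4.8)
The plan is to lean on the explicit matrix factorisation already recorded above. Writing $M=\begin{pmatrix} a & b \\ c & d \end{pmatrix}\in\slr$, we have $M\in N\cdot A$ when $c=0$ and $M\in N\cdot A\cdot Q\cdot N$ when $c\neq 0$, so in either case $M=M_1\cdots M_k$ with each $M_i$ lying in $N$, lying in $A$, or equal to $Q$. First I would record that the induced action on RoC \emph{diagrams} is a genuine group action: by \Cref{prop: coordinate transform} the map $T_M$ applies the single fractional linear transformation $r\mapsto (ar+b)/(cr+d)$ to each coordinate, and such maps compose by matrix multiplication (up to the irrelevant sign $\pm I$). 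Hence $T_M=T_{M_1}\circ\cdots\circ T_{M_k}$ as maps of RoC space, so the factorisation of $M$ descends to a factorisation of the diagram transformation.

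Next I would identify the geometric meaning of each factor: a factor in $N$ induces a parallel (normal) translation by \Cref{prop: N is par translations}, a factor in $A$ induces a homothety by \Cref{prop: A is homothety}, and the factor $Q$ induces a reciprocal transformation by \Cref{prop: reciprocal trm}. The crucial bookkeeping tool is \Cref{prop: transformations on theta and rho} rewritten as a \emph{linear} action on the pair $(\sin\theta,\rho)$: with reparametrisation constant $\mathcal{A}$ one has
\begin{align*}
&\begin{pmatrix}\sin\tilde\theta \\ \widetilde{\rho}\end{pmatrix}=\mathcal{A}\,(JMJ)\begin{pmatrix}\sin\theta \\ \rho\end{pmatrix}, &&J=\begin{pmatrix}0 & 1 \\ 1 & 0\end{pmatrix},\quad JMJ=\begin{pmatrix}d & c \\ b & a\end{pmatrix}.
\end{align*}
Since $M\mapsto JMJ$ is conjugation by the fixed involution $J$, hence a homomorphism, and the scalar $\mathcal{A}$ commutes with everything, composing the factor transformations with their own constants $\mathcal{A}_i$ gives precisely the transformation attached to $M$ with constant $\prod_i\mathcal{A}_i$. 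This simultaneously shows the composition realises $T_M$ on RoC space and keeps track of the speed of parametrisation, so no hidden discrepancy between the factor reparametrisations and the global one can arise.

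The hard part will not be the algebra but the bookkeeping of the constants $\mathcal{A}$ together with the need for each intermediate object to be an honest surface. I would argue that at every stage of the composition the current RoC diagram integrates, by \Cref{thm: new surface form old with sl2r}, to a rotationally symmetric $C^2$ surface (with the degenerate flat cases, such as spheres mapping to planes or cones, handled exactly as in that theorem), so the sequence of geometric maps is well defined surface-by-surface. The constants for $N$ and $A$ are pinned down canonically ($\mathcal{A}=1$ and $\mathcal{A}=\omega$), while $Q$ (and, if required, an additional homothety) carries a free scaling; choosing it appropriately matches any prescribed total $\mathcal{A}$, so \emph{every} surface transformation induced by $M$—not merely one normalisation—is captured. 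Finally I would dispatch the two factorisation cases separately: for $c=0$ only translations and homotheties are needed, while for $c\neq 0$ the factorisation genuinely involves $Q$. Together these establish that any surface transformation induced by an element of $\slr$ is a composition of parallel translations, homotheties and reciprocal mappings.
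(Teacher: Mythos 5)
Your proposal is correct and follows essentially the paper's own (implicit) proof: the paper simply displays the factorisations of $M$ into elements of $N$, $A$ and $Q$ and invokes \Cref{prop: N is par translations}, \Cref{prop: A is homothety} and \Cref{prop: reciprocal trm}, exactly as you do, with your conjugation-by-$J$ bookkeeping of the constants $\mathcal{A}$ being a sharpening that the paper leaves unstated. One caveat on your handling of the constants: in the $c=0$ case with non-canonical normalisation $\mathcal{A}\neq a$, an ``additional homothety'' cannot supply the missing scaling, since a homothety's constant is tied to its matrix (giving $\mathrm{diag}(1,\omega^2)$ on $(\sin\theta,\rho)$, never a scalar multiple of the identity); to cover that case one should instead insert a pair of reciprocal transformations, whose composition realises an arbitrary scalar rescaling of $(\sin\theta,\rho)$ — a refinement of a point the paper itself never addresses, so your argument matches the paper everywhere the paper actually commits itself.
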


\vspace{0.1in}
\subsection{General Properties of the $\slr$ Transformations.} \label{subsec: weingarten surface transformations}

The $\slr$ transformations generate examples of surfaces satisfying transformed Weingarten relations. If $\mathcal{S}$ has principal curvatures $(k_1,k_2)$, then its image $\widetilde{\mathcal{S}}$, has principal curvatures $(\widetilde{k}_1,\widetilde{k}_2)$ given by Remark \ref{rem: prin curv transform}. If $\eta$ denotes the map $\eta:(k_1,k_2)\mapsto (\widetilde{k}_1,\widetilde{k}_2)$ and $\mathcal{S}$ satisfies a Weingarten relation $W(k_1,k_2)=0$ we have
\begin{equation}\label{eq: push fwd}
(W\circ \eta^{-1})(\widetilde{k}_1,\widetilde{k}_2)=W(k_1,k_2)=0,
\end{equation}
hence dropping the tildes on the principal curvatures,  $\widetilde{\mathcal{S}}$ satisfies the Weingarten relationship $\widetilde{W}(k_1,k_2)=0$, where $\widetilde{W}=W\circ \eta^{-1}$. Furthermore, it allows us to relate surfaces satisfying different Weingarten relations through induced transformations in $\mathbb{E}^3$. Now we give some general properties of the $\slr$ transformations in terms of how they transform surfaces and their Weingarten relations.
\begin{Thm}\label{thm: properties of the slr}
    Let $\widetilde{\mathcal{S}}$ be the image of a surface of revolution $\mathcal{S}$ under the transformation of surfaces induced by $M \in \slr$.
\begin{enumerate}
\itemsep=-1em
    \item Umbilic points of $\mathcal{S}$ are mapped to umbilic points of $\widetilde{\mathcal{S}}$.\\
    \item When $\mathcal{S}$ is Weingarten, elliptic points of $\mathcal{S}$ are mapped to elliptic points of $\widetilde{\mathcal{S}}$, providing the principal curvatures at this point, $k_i$, satisfy $k_i\neq -a/b$.\\
    \item If $p\in\mathcal{S}$ and $\widetilde{p}\in\widetilde{\mathcal{S}}$ are both non-flat and isolated umbilic points with $\widetilde{p}$ being the image of $p$ under an $\slr$ transformation then  $\mu_{\widetilde{p}}=\mu_p$.
\end{enumerate}
\end{Thm}
\begin{proof}
These claims follow from the curvature transformations given in Proposition \ref{prop: coordinate transform}/Remark \ref{rem: prin curv transform}. Let $\mathcal{S}$ and $\widetilde{\mathcal{S}}$ have principal curvatures $(k_1,k_2)$ and $(\widetilde{k}_1,\widetilde{k}_2)$ respectively. The map $\eta:(k_1,k_2)\mapsto (\widetilde{k}_1,\widetilde{k}_2)$ as in equation (\ref{eq: push fwd}) is given explicitly as
\[
\eta(x,y)=\left(\frac{dx+c}{bx+a},\frac{dy+c}{by+a}\right).
\]
As long as $k_i\neq -a/b$,  $k_1=k_2$ if and only if $\widetilde{k}_1=\widetilde{k}_2$. If  Claim 2 follows by checking that
\begin{equation}\label{eq: derivative of transformed w}
{\pdv{\widetilde{W}(\widetilde{k}_1,\widetilde{k}_2)}{\widetilde{k}_i}}=\left(bk_i+a\right)^{2}\cdot{\pdv{W(k_1,k_2)}{k_i}},
\end{equation}
for $i=1,2$. Hence, recalling that for a Weingarten surface, ellipticity is equivalent to condition (\ref{eq: ellipticity cond}), it follows that $\mathcal{S}$ is elliptic at the point $q$ if and only if $\widetilde{S}$ is elliptic at the point $\widetilde{q}$, the image of $q$ under an $\slr$ transformation.
To show claim 3, first note that as $\widetilde{p}$ is a non-flat point, $\widetilde{r}_0\neq \infty$ and so $r_0\neq -d/c$. Note that after applying an $\slr$ transformation to the curvatures
\begin{align*}
\frac{\widetilde{r}_2-\widetilde{r}_0}{\widetilde{r}_1-\widetilde{r}_0}=\frac{\frac{ar_2+b}{cr_2+d}-\frac{ar_0+b}{cr_0+d}}{\frac{ar_1+b}{cr_1+d}-\frac{ar_0+b}{cr_0+d}}=\left(\frac{cr_1+d}{cr_2+d}\right)\left(\frac{r_2-r_0}{r_1-r_0}\right).
\end{align*}
Hence taking the limit of the above as $(r_1,r_2)\to(r_0,r_0)$ gives $\mu_{\widetilde{p}}=\mu_p$.
\end{proof}
\vspace{0.1in}
\subsection{Application to Semi-Quadratic Weingarten Surfaces}\label{subsec: app to qw surfaces}
A \textit{quadratic Weingarten surface} is any $C^2$-smooth surface satisfying the Weingarten relationship
\begin{equation}\label{eq: full quadratic Weingarten relationship}
    \tau k_2^2+\nu k_1^2+\alpha k_1k_2+\beta k_1 + \gamma k_2 +\delta =0, \qquad (\tau,\nu,\alpha,\beta,\gamma,\delta) \in \mathbb{R}^6\backslash\{0\},
\end{equation}
where $k_i$ are the principal curvatures. Special cases of this Weingarten relationship have been studied previously in the rotationally symmetric setting \cite{ks05}. We study the following subfamily.
\begin{Def}
The subfamily of quadratic Weingarten surfaces satisfying
\begin{equation}\label{eq: quadratic Weingarten relationship}
    \alpha k_1k_2+\beta k_1 + \gamma k_2 +\delta =0 \qquad \alpha,\beta,\gamma,\delta \in \mathbb{R},
\end{equation}
we call \textit{semi-quadratic Weingarten surfaces}.
\end{Def}
 This subfamily contains well studied classes of surfaces:
 \begin{align*}
&r_2=\lambda r_1+C, &k_2=\lambda k_1 + C, &&\lambda H+&\Upsilon K+C =0,&\\
&(\alpha\neq0,\delta=0) & (\alpha=0,\delta\neq0) &&(&\beta=\gamma)&
\end{align*}
for $\lambda,\Upsilon,C \in \mathbb{R}$, which are from left to right the linear Hopf surfaces which satisfy relation (\ref{eq: lin hopf surf}), $k$-\textit{linear surfaces} and \textit{LW-surfaces}, studied in \cite{gk21,lp20}, \cite{lopez08,pampano20} and  \cite{gmm03,lopez08 hyp} respectively. Consider the following quantities:
\begin{align}
    &\Lambda_1=\beta-\gamma, &\Lambda_2=(\beta+\gamma)^2-4\alpha\delta.
\end{align}
When $\Lambda_1=0$ semi-quadratic surfaces become LW-surfaces. LW-surfaces are said to be \textit{hyperbolic} (\textit{elliptic}) when $\Lambda_2<0$ ($>0$) \cite{gmm03,lopez08 hyp} or satisfy $\Lambda_2=0$ which characterises tubular surfaces. This motivates the following nomenclature
\begin{Def}
    A semi-quadratic Weingarten surface satisfying $\Lambda_2>\Lambda_1^2$ is said to be \textit{elliptic}. If $\Lambda_2<\Lambda_1^2$ it is said to be \textit{hyperbolic}.
\end{Def}
The relative sizes and signs of $\Lambda_1$ and $\Lambda_2$ strongly control a semi-quadratic surface's behaviour:
\begin{Prop}\label{prop: qw surface lam_2>0}
A semi-quadratic Weingarten surface cannot have umbilic points unless $\Lambda_2\geq0$.
\end{Prop}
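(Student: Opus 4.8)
The plan is to reduce the statement to an elementary fact about the real roots of a quadratic polynomial. First I would observe that at an umbilic point $p$ the two principal curvatures coincide, so $k_1(p)=k_2(p)=k_0$ for some $k_0\in\mathbb{R}$ (for a $C^2$-smooth surface the principal curvatures are finite real numbers at every point, so $k_0$ is finite). Substituting $k_1=k_2=k_0$ into the defining semi-quadratic relation (\ref{eq: quadratic Weingarten relationship}) collapses it to the single scalar equation
\[
\alpha k_0^2+(\beta+\gamma)k_0+\delta=0.
\]
Thus the existence of an umbilic point forces the quadratic $t\mapsto \alpha t^2+(\beta+\gamma)t+\delta$ to possess a real root $k_0$.

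The second step is simply to read off the condition for such a real root. When $\alpha\neq 0$ the polynomial has a real root if and only if its discriminant is non-negative, and that discriminant is exactly $(\beta+\gamma)^2-4\alpha\delta=\Lambda_2$. Hence an umbilic point can occur only if $\Lambda_2\geq 0$. Taking the contrapositive, if $\Lambda_2<0$ then the umbilic equation has no real solution $k_0$, and so $\mathcal{S}$ admits no umbilic point whatsoever, which is precisely the claim.

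The only remaining work is to dispose of the degenerate case, which I expect to be the sole point requiring a word of care rather than a genuine obstacle. If $\alpha=0$ the umbilic equation is linear rather than quadratic, but in that case $\Lambda_2=(\beta+\gamma)^2\geq 0$ holds automatically, so the conclusion is vacuously satisfied; one may also note that a planar umbilic, for which $k_0=0$, forces $\delta=0$ on substitution and hence again $\Lambda_2=(\beta+\gamma)^2\geq 0$. I would emphasise that neither rotational symmetry nor the Codazzi--Mainardi machinery of Section \ref{sec: background} is needed here: the result is purely algebraic, obtained by substituting the umbilic condition $k_1=k_2$ directly into the Weingarten relation and invoking the sign of the discriminant.
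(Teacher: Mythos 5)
Your proof is correct and is essentially the same as the paper's: both substitute $k_1=k_2=k_0$ into the relation to obtain $\alpha k_0^2+(\beta+\gamma)k_0+\delta=0$, observe that a real root requires the discriminant $\Lambda_2\geq 0$, and dispose of the $\alpha=0$ case by noting $\Lambda_2=(\beta+\gamma)^2\geq 0$ automatically. No gaps; your treatment of the degenerate case is, if anything, slightly more explicit than the paper's.
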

\begin{proof}
First assume $\alpha\neq0$, otherwise $\Lambda_2\geq 0$ and we are done. If we have such a surface, the curvatures at the umbilic point must satisfy the relation (\ref{eq: quadratic Weingarten relationship}) when $k_1=k_2=k$, i.e.
\[
\alpha k^2+(\beta+\gamma)k+\delta=0.
\]
Solving the above quadratic implies the curvatures at the umbilic point satisfy
\begin{equation}\label{eq: umbilic k quadratic weingarten}
 k=\frac{1}{2\alpha}\left(-(\beta+\gamma)\pm \sqrt{\Lambda_2}\right),
\end{equation}
and therefore $k\in\mathbb{R}$ if and only if $\Lambda_2\geq 0$.
\end{proof}
\begin{Prop}\label{prop: qw ellipticity condition}
 A semi-quadratic Weingarten surface's Weingarten relation is an elliptic PDE at an umbilic point if and only if
 \begin{equation}
     \Lambda_2>\Lambda_1^2.
 \end{equation}
\end{Prop}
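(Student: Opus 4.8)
The plan is to invoke the ellipticity criterion of equation (\ref{eq: ellipticity cond}), which characterises ellipticity of the Weingarten relation at a point $q$ by positivity of the product $\pdv{W}{k_1}\cdot\pdv{W}{k_2}$ there. For the semi-quadratic relation $W=\alpha k_1k_2+\beta k_1+\gamma k_2+\delta$ the partial derivatives are immediate, $\pdv{W}{k_1}=\alpha k_2+\beta$ and $\pdv{W}{k_2}=\alpha k_1+\gamma$. First I would specialise to an umbilic point, where $k_1=k_2=k$, so that the relevant product becomes $(\alpha k+\beta)(\alpha k+\gamma)$.

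The key step is to expand this product and eliminate its quadratic term using the constraint that $k$ must satisfy at an umbilic point. Expanding gives $\alpha^2k^2+\alpha(\beta+\gamma)k+\beta\gamma$, and as in the proof of \Cref{prop: qw surface lam_2>0} the umbilic curvature obeys $\alpha k^2+(\beta+\gamma)k+\delta=0$, whence $\alpha^2k^2=-\alpha(\beta+\gamma)k-\alpha\delta$. Substituting this cancels the terms linear in $k$ and leaves
\[
\left(\pdv{W}{k_1}\cdot\pdv{W}{k_2}\right)\bigg|_q=\beta\gamma-\alpha\delta,
\]
a value that is notably independent of which of the two umbilic curvatures is chosen.

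It then remains only to rewrite $\beta\gamma-\alpha\delta$ in terms of $\Lambda_1$ and $\Lambda_2$. Using the elementary identity $(\beta+\gamma)^2-(\beta-\gamma)^2=4\beta\gamma$, one computes
\[
\Lambda_2-\Lambda_1^2=\left[(\beta+\gamma)^2-4\alpha\delta\right]-(\beta-\gamma)^2=4\beta\gamma-4\alpha\delta=4(\beta\gamma-\alpha\delta).
\]
Thus the ellipticity condition $\beta\gamma-\alpha\delta>0$ holds if and only if $\Lambda_2-\Lambda_1^2>0$, which is exactly the claim.

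I do not anticipate a genuine obstacle, since the argument is a short computation once the ellipticity criterion is in hand. The only point requiring a small observation rather than routine expansion is the use of the umbilic constraint on $k$ to remove all $k$-dependence from the product of partials; this is precisely what collapses the expression to the clean combination $\beta\gamma-\alpha\delta$ of the coefficients.
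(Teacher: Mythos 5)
Your proposal is correct and follows essentially the same route as the paper: both invoke the ellipticity criterion (\ref{eq: ellipticity cond}) and evaluate $\pdv{W}{k_1}\cdot\pdv{W}{k_2}$ at an umbilic point, arriving at the identity $\left.\pdv{W}{k_1}\cdot\pdv{W}{k_2}\right|_p=\tfrac{1}{4}(\Lambda_2-\Lambda_1^2)$, which the paper states as the outcome of a calculation using the explicit roots in equation (\ref{eq: umbilic k quadratic weingarten}). Your elimination of the quadratic term via the umbilic constraint, rather than substituting the roots $k=\tfrac{1}{2\alpha}\left(-(\beta+\gamma)\pm\sqrt{\Lambda_2}\right)$, is a minor but welcome refinement, since it shows the product is independent of the choice of root and avoids the division by $\alpha$ implicit in the paper's formula.
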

\begin{proof}
Let $\mathcal{S}$ be a semi-quadratic Weingarten surface with an umbilic point $p$. Also let
\begin{equation}\label{eq: W of a semi QW surface}
W(k_1,k_2)=\alpha k_1k_2+\beta k_1+\gamma k_2+\delta.
\end{equation}
Let $k$ be the common value of $k_1(p)$ and $k_2(p)$, then $k$ is given by either one of the values in equation (\ref{eq: umbilic k quadratic weingarten}) and necessarily $\Lambda_2\geq0$. In which case one can calculate that
\begin{equation}\label{eq: ellipticity is lambda_2-lambda_1 squared}
    \left. \left(\pdv{W}{k_1} \cdot \pdv{W}{k_2}\right) \right|_p =\frac{1}{4}(\Lambda_2-\Lambda_1^2).
\end{equation}
Therefore the result follows by the definition of a Weingarten relation being elliptic (\ref{eq: ellipticity cond}).
\end{proof}
\begin{Prop}\label{prop: qw umbilic slope}
 If $\Lambda_2\neq \Lambda_1^2$, the umbilic slope at an isolated umbilic point $p$ of a semi-quadratic Weingarten surface takes one of the values
 \begin{equation}\label{eq: QW umbilic slope}
\mu_p=\frac{\Lambda_1\pm\sqrt{\Lambda_2}}{\Lambda_1\mp\sqrt{\Lambda_2}},
 \end{equation}
 If $\Lambda_2=\Lambda_1^2$, then at such a point $p$ either $\mu_p=0$ or $\mu_p$ is unbounded.
 \end{Prop}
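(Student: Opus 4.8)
The plan is to specialise the general formula for the umbilic slope in terms of the Weingarten relation, equation (\ref{eq: umbilic slope in terms of WR}), to the semi-quadratic relation $W(k_1,k_2)=\alpha k_1k_2+\beta k_1+\gamma k_2+\delta$. Since $p$ is an isolated umbilic point, $\mathcal{S}$ is not totally umbilic around $p$, so by the discussion preceding (\ref{eq: umbilic slope in terms of WR}) the slope $\mu_p$ is well defined and given by that formula. First I would compute the two first-order partials
\[
\pdv{W}{k_1}=\alpha k_2+\beta, \qquad \pdv{W}{k_2}=\alpha k_1+\gamma.
\]
These are continuous, and $k_1,k_2\to k$ as $q\to p$ (where $k$ is the common curvature value at $p$), so the limit in (\ref{eq: umbilic slope in terms of WR}) is simply the value of the ratio at $p$, giving
\[
\mu_p=-\frac{\alpha k+\beta}{\alpha k+\gamma}.
\]

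Next I would insert the umbilic curvature value supplied by \Cref{prop: qw surface lam_2>0}, equation (\ref{eq: umbilic k quadratic weingarten}), which gives $\alpha k=\tfrac12\bigl(-(\beta+\gamma)\pm\sqrt{\Lambda_2}\bigr)$. Using $\Lambda_1=\beta-\gamma$, substitution yields
\[
\alpha k+\beta=\tfrac12\bigl(\Lambda_1\pm\sqrt{\Lambda_2}\bigr), \qquad \alpha k+\gamma=\tfrac12\bigl(-\Lambda_1\pm\sqrt{\Lambda_2}\bigr).
\]
Dividing, the factors of $\tfrac12$ cancel and a sign is absorbed into the denominator via $-\Lambda_1\pm\sqrt{\Lambda_2}=-(\Lambda_1\mp\sqrt{\Lambda_2})$, producing
\[
\mu_p=-\frac{\Lambda_1\pm\sqrt{\Lambda_2}}{-\Lambda_1\pm\sqrt{\Lambda_2}}=\frac{\Lambda_1\pm\sqrt{\Lambda_2}}{\Lambda_1\mp\sqrt{\Lambda_2}},
\]
which is exactly the claimed expression. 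Note that the two sign choices in (\ref{eq: umbilic k quadratic weingarten}) correspond to the two curvature values at which an umbilic point may occur, and these produce the two displayed slopes, which are mutually reciprocal.

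The computation itself is routine, so the only care needed lies in the degenerate situations. When $\alpha=0$ the quadratic for $k$ degenerates and must be replaced by $k=-\delta/(\beta+\gamma)$; here one checks directly that $\mu_p=-\beta/\gamma$ still agrees with the stated expression once $\sqrt{\Lambda_2}=|\beta+\gamma|$ is substituted, so no separate argument is really required beyond this remark. The other delicate point is the border case $\Lambda_1^2=\Lambda_2$, in which one of the candidate denominators $-\Lambda_1\pm\sqrt{\Lambda_2}$ vanishes; there the corresponding slope must be read projectively as infinite (a vertical tangent to the curvature diagram), which is consistent with \Cref{prop: qw ellipticity condition}, where ellipticity is seen to fail precisely when $\Lambda_2-\Lambda_1^2\le 0$. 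I expect this bookkeeping of the $\pm$ correspondence together with the vanishing-denominator case to be the only subtlety; there is no analytic obstacle, since the argument reduces entirely to evaluating continuous functions at the umbilic point.
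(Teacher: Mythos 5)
Your proposal is correct and follows essentially the same route as the paper: the paper's proof likewise evaluates the ratio $-\pdv{W}{k_1}\big/\pdv{W}{k_2}$ from equation (\ref{eq: umbilic slope in terms of WR}) at the umbilic curvature value (\ref{eq: umbilic k quadratic weingarten}) and states the result ``after a calculation.'' Your write-up simply makes that calculation explicit and additionally covers the $\alpha=0$ and $\Lambda_1^2=\Lambda_2$ degeneracies, which the paper's terse proof passes over.
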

\begin{proof}
 The possible values of $\mu_p$ are simply the possible slopes the algebraic curve $W(k_1,k_2)=\alpha k_1k_2+\beta k_1+\gamma k_2 +\delta=0$ intersects the diagonal line $k_1=k_2$. Let $k_0$ be given by equation (\ref{eq: umbilic k quadratic weingarten}) so that $W(k_0,k_0)=0$.  If $\Lambda_2\neq \Lambda_1^2$, then equation (\ref{eq: ellipticity is lambda_2-lambda_1 squared}) implies ${\pdv{W}{k_2}}\neq 0$. Then if $k_1=k_2=k_0$,
 \[
 \mu_p=-{\pdv{W}{k_1}}\bigg / {\pdv{W}{k_2}}=\frac{\Lambda_1\pm\sqrt{\Lambda_2}}{\Lambda_1\mp\sqrt{\Lambda_2}}.
 \]
If $\Lambda_2=\Lambda_1^2$ then  $\beta\gamma=\alpha\delta$, forcing $W^{-1}\{0\}$ to be either a line of constant $k_1$, a line of contant $k_2$, or a union of the two. In which case $\mu_p$ may be either $0$ or unbounded.
\end{proof}
We will study $\slr$ transformations between semi-quadratic Weingarten surfaces that are rotationally symmetric and satisfy $\Lambda_2>0$. The case $\Lambda_2<0$ has been studied in the particular scenario $\Lambda_1=0$ with a classification result being obtained in the rotationally symmetric setting \cite{lopez08 hyp}. To make the following discussion simpler we assume w.l.o.g. that $\Lambda_2=1$ by dividing equation (\ref{eq: quadratic Weingarten relationship}) through by $\sqrt{\Lambda_2}$. In such a case we say relationship (\ref{eq: quadratic Weingarten relationship}) is normalised.
\begin{Prop}\label{prop: slr preserve qw}
The $\slr$ transformations map normalised quadratic Weingarten relations to normalised quadratic Weingarten relations. If the initial relation has coefficients $\alpha,\beta,\gamma,\delta \in \mathbb{R}$, the coefficients of the target relationship are given by
\begin{align}\label{eq: alpha' equ.}
    &\alpha'=\alpha d^2 + \delta b^2 + (\beta+\gamma)bd,\\\label{eq: beta' equ.}
    &\beta'=\alpha cd+\delta ab +(\beta+\gamma)bc +\beta,\\\label{eq: gamma' equ.}
    &\gamma'=\alpha cd+\delta ab +(\beta+\gamma)bc +\gamma,\\
    & \delta'=\alpha c^2 + \delta a^2 + (\beta+\gamma)ac.
    \label{eq: delta' equ.}
\end{align}
Furthermore $\Lambda_1^2$ is an invariant. 
\end{Prop}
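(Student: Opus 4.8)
The plan is to compute the image relation directly from the curvature transformation, read off the four coefficients, and then extract the invariants. First I would invoke the curvature map of \Cref{rem: prin curv transform}, namely $\widetilde{k}_i=(dk_i+c)/(bk_i+a)$, whose inverse expresses each original curvature $k_i$ as a fractional linear function of the transformed curvature. Substituting these into the relation $W(k_1,k_2)=\alpha k_1k_2+\beta k_1+\gamma k_2+\delta=0$ and clearing the two linear denominators (one for each substituted curvature) produces a polynomial in the transformed curvatures which, after dropping tildes, is the new Weingarten relation $\widetilde{W}$; this is exactly the push-forward (\ref{eq: push fwd}).

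Before expanding I would record the structural reason for closure: a semi-quadratic relation is precisely one that is affine (degree at most one) in each curvature separately, and applying a Möbius map in each variable and then clearing that variable's own linear denominator preserves this bidegree. Hence no $k_i^2$ term can appear and the image is again semi-quadratic, which settles the first assertion at the level of form. To obtain the explicit coefficients (\ref{eq: alpha' equ.})--(\ref{eq: delta' equ.}) I would expand the four products and collect the coefficients of $\widetilde{k}_1\widetilde{k}_2$, $\widetilde{k}_1$, $\widetilde{k}_2$ and $1$. The only non-formal point is tidying the first-order coefficients: the raw collection produces terms such as $\beta ad$ and $\gamma ad$, which I would rewrite using $\det M=ad-bc=1$, i.e. $ad=1+bc$, so that $\beta ad=\beta bc+\beta$. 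This is exactly what turns the coefficient of $\widetilde{k}_1$ into $\alpha cd+\delta ab+(\beta+\gamma)bc+\beta=\beta'$, and symmetrically for $\gamma'$, while the top and constant coefficients give $\alpha'$ and $\delta'$ directly.

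It then remains to establish the normalisation and invariance claims. From the formulas one reads off $\beta'-\gamma'=\beta-\gamma$, so $\Lambda_1'=\Lambda_1$ and in particular $\Lambda_1^2$ is invariant; the square is the intrinsic statement, since relabelling $k_1\leftrightarrow k_2$ interchanges $\beta$ and $\gamma$ and flips the sign of $\Lambda_1$. For the normalisation I would compute $\Lambda_2'=(\beta'+\gamma')^2-4\alpha'\delta'$ and show it factors as $(ad-bc)^2\Lambda_2=\Lambda_2$. Writing $\sigma=\beta+\gamma$ and using $2bc+1=ad+bc$ to put $\beta'+\gamma'=2\alpha cd+2\delta ab+\sigma(ad+bc)$, the expansions of $(\beta'+\gamma')^2$ and of $4\alpha'\delta'$ share every mixed monomial in $a,b,c,d$, and after cancellation only $(\sigma^2-4\alpha\delta)(ad-bc)^2$ survives. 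Since $ad-bc=1$ this gives $\Lambda_2'=\Lambda_2$, so a normalised relation ($\Lambda_2=1$) is carried to a normalised one, completing the remaining assertion.

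I expect the final factorisation of $\Lambda_2'$ to be the only genuine obstacle. The bookkeeping is heavy, and its entire content is that the determinant condition $ad-bc=1$ forces the cross terms to telescope into the clean factor $(ad-bc)^2$; organising the expansion by monomials in $a,b,c,d$ rather than by the coefficients $\alpha,\beta,\gamma,\delta$ is what makes the cancellation transparent and keeps the computation from becoming unwieldy.
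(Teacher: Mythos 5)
Your strategy is the same as the paper's: substitute the fractional linear transformation into $W$, clear the two linear denominators, collect the bidegree-$(1,1)$ coefficients, use $ad-bc=1$ to tidy the first-order terms, and then verify the invariance of $\Lambda_1$ and $\Lambda_2$. Your two additions --- the structural remark that clearing denominators preserves the property of being affine in each curvature separately, and the organisation of the $\Lambda_2$ computation as the factorisation $\Lambda_2'=(ad-bc)^2\Lambda_2$ --- are both correct, and the latter is exactly the content the paper compresses into ``a calculation verifies that $\Lambda_2'=\Lambda_2=1$''. Your uniform clearing of denominators also avoids the paper's separate $b=0$ case, where the paper (not clearing the now-constant denominator $a^2$) instead tracks the scalings $\Lambda_1'=\tfrac{d}{a}\Lambda_1$, $\Lambda_2'=\tfrac{d^2}{a^2}\Lambda_2$ and renormalises by hand.

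There is, however, one concrete slip: the direction of the substitution. The stated formulas (\ref{eq: alpha' equ.})--(\ref{eq: delta' equ.}) arise from substituting the \emph{forward} map $k_i\mapsto (c+dk_i)/(a+bk_i)$ into $W$ --- this is what the paper's proof does. You instead substitute the \emph{inverse} map $k_i=(a\widetilde{k}_i-c)/(d-b\widetilde{k}_i)$, which is the literal reading of $\widetilde{W}=W\circ\eta^{-1}$ in (\ref{eq: push fwd}). Carried out faithfully, that substitution yields the stated formulas with $(a,b,c,d)$ replaced by $(d,-b,-c,a)$, the entries of $M^{-1}$: for instance the coefficient of $\widetilde{k}_1$ comes out as $-\alpha ac-\delta bd+(\beta+\gamma)bc+\beta$, not $\alpha cd+\delta ab+(\beta+\gamma)bc+\beta$ as you claim. (The $\beta ad$ term that you tidy via $ad=1+bc$ appears either way; it is the $\alpha$- and $\delta$-contributions that differ.) Because $M\mapsto M^{-1}$ is a bijection of $\slr$, this relabelling is immaterial for every assertion of the proposition --- closure, preservation of normalisation, invariance of $\Lambda_1^2$ --- and for its later use in \Cref{prop: sl2 acts transitively on QW}; indeed the ambiguity originates in the paper itself, whose proof substitutes the forward map despite (\ref{eq: push fwd}). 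But as written, your middle step does not literally produce equations (\ref{eq: alpha' equ.})--(\ref{eq: delta' equ.}): either substitute the forward map as the paper does, or record that your coefficients are the stated formulas evaluated at $M^{-1}$.
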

\begin{proof}
We substitute the $\slr$ transformations from Proposition \ref{prop: coordinate transform} in the form
\[
k_i\mapsto \frac{c+dk_i}{a+bk_i},
\]
into the initial quadratic relationship (\ref{eq: quadratic Weingarten relationship}). If $b\neq0$, multiplying through by any denominators gives a relationship of the form
\begin{equation}\label{eq: target relation}
\alpha' k_1k_2+\beta' k_1 + \gamma' k_2 +\delta' =0,
\end{equation}
with $\alpha',\beta,'\gamma'$ and $\delta'$ satisfying equations (\ref{eq: alpha' equ.})-(\ref{eq: delta' equ.}). It is immediate from these equations that $\Lambda_1'^2=\Lambda_1^2$. Furthermore, a calculation verifies  that $\Lambda_2'=\Lambda_2=1$ and so the target relationship (\ref{eq: target relation}) is normalised. If $b=0$, the target relation is already of the form (\ref{eq: target relation}), however with $\Lambda_1'=\frac{d}{a}\Lambda_1$ and $\Lambda_2'=\frac{d^2}{a^2}$. Dividing equation (\ref{eq: target relation}) through by $d/a$ normalises the relationship so that $\Lambda_2'=1$ implying $\Lambda_1'^2=\Lambda_1^2$.
\end{proof}
\begin{remark}
When the semi-quadratic relationship is not normalised, the corresponding invariant is $\Lambda_1^2/\Lambda_2$, shown by multiplying through a normalised equation by $\sqrt{\Lambda_2}$.
\end{remark}
\begin{Prop}\label{prop: sl2 acts transitively on QW}
 When $\Lambda_2>0$, the $\slr$ transformations act transitively on the families of normalised semi-quadratic Weingarten relations with the same $\Lambda_1^2$.
\end{Prop}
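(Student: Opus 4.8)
The plan is to reduce every normalised relation to the one-parameter canonical family $k_2=\lambda k_1$ and then read off transitivity from the fact that $\Lambda_1^2$ pins down the canonical representative up to a single reciprocal transformation. Writing $\sigma=\beta+\gamma$, the transformation rules (\ref{eq: alpha' equ.})--(\ref{eq: delta' equ.}) show that the triple $(\alpha,\sigma,\delta)$ transforms by $F\mapsto P^{\top}FP$, where $F=\left(\begin{smallmatrix}\alpha & \sigma/2\\ \sigma/2 & \delta\end{smallmatrix}\right)$ and $P=\left(\begin{smallmatrix} d & c\\ b & a\end{smallmatrix}\right)\in\slr$; that is, $\slr$ acts on the associated binary quadratic form by congruence, while $\Lambda_1=\beta-\gamma$ is carried along. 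Since $\det F=-\tfrac14\Lambda_2=-\tfrac14$, this is exactly the congruence action on \emph{indefinite} forms of fixed discriminant, and the map $M\mapsto P$ is an involution of $\slr$, so every such congruence is realised by a genuine $\slr$ transformation.

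First I would reduce the quadratic-form part to the standard hyperbolic form $\alpha=\delta=0$, $\sigma=1$. Because $\Lambda_2>0$ the form $F$ is indefinite, so $\alpha t^2+\sigma t+\delta$ has two distinct real roots and $F$ has two independent real null directions $u_1,u_2$. Taking $P_0=[u_1\mid u_2]$ kills both diagonal entries of $P_0^{\top}FP_0$, and rescaling the columns by a diagonal matrix of determinant $(\det P_0)^{-1}$ keeps those zeros while putting $P$ in $\slr$; the constraint $\det F=-\tfrac14$ then forces the off-diagonal entry to be $\pm\tfrac12$, and absorbing the immaterial overall sign of the relation we may take $\sigma=1$. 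The resulting relation is homogeneous and linear, $\beta k_1+\gamma k_2=0$ with $\beta+\gamma=1$, hence of the form $k_2=\lambda k_1$ for some $\lambda\in\widehat{\mathbb{R}}$ (with $\lambda\in\{0,\infty\}$ in the borderline case $\Lambda_1^2=\Lambda_2$, where $\beta$ or $\gamma$ vanishes; note $\beta,\gamma$ cannot both vanish since $\sigma=1$).

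Next I would identify $\lambda$ via the invariant. For $k_2=\lambda k_1$ one has $\Lambda_1=\lambda+1$ and $\Lambda_2=(\lambda-1)^2$, so $\Lambda_1^2/\Lambda_2=(\lambda+1)^2/(\lambda-1)^2$; by \Cref{prop: slr preserve qw} this equals the original invariant, whence the reduced $\lambda$ is one of the two reciprocal roots $\{\lambda_0,1/\lambda_0\}$ of that quadratic. These two representatives are themselves $\slr$-related: the reciprocal transformation $Q=\left(\begin{smallmatrix}0 & -1\\ 1 & 0\end{smallmatrix}\right)$ acts on curvatures by $k_i\mapsto -1/k_i$ and so sends $k_2=\lambda k_1$ to $k_2=\lambda^{-1}k_1$. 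Thus, given two normalised relations $R_1,R_2$ with the same $\Lambda_1^2$, the reductions above land them at $k_2=\lambda_1 k_1$ and $k_2=\lambda_2 k_1$ with $\lambda_1,\lambda_2\in\{\lambda_0,1/\lambda_0\}$; composing the two reductions, inserting $Q$ exactly when $\lambda_1=\lambda_2^{-1}$, yields a single element of $\slr$ carrying $R_1$ to $R_2$, which is the asserted transitivity.

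The main obstacle I anticipate is bookkeeping around the normalisation rather than any genuine depth: one must verify that the null-direction reduction can always be carried out inside $\slr$ (determinant one, not merely $\mathrm{GL}_2(\mathbb{R})$), and check that the overall sign changes forced by maintaining $\Lambda_2=1$ are precisely the ambiguity already absorbed by the $\lambda\leftrightarrow\lambda^{-1}$ freedom, so that neither a spurious extra orbit is created nor a connection missed. The conceptual core is the identification of the action with congruence of binary quadratic forms together with the transitivity of $\slr$ on indefinite forms of fixed discriminant; once that is in hand, the reduction to $k_2=\lambda k_1$ and the role of $Q$ make the argument essentially automatic.
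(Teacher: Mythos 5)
Your proposal is correct, and it takes a genuinely different route from the paper. The paper proves transitivity by brute force: after fixing the sign so that $\Lambda_1>0$, it solves the nonlinear system (\ref{eq: alpha' equ.})--(\ref{eq: delta' equ.}) explicitly for $(a,b,c,d)$ with $ad-bc=1$, splitting into the cases $\delta'\neq0$ and $\delta'=0$ (with further sub-cases $\delta=0$, $c=0$, $\alpha=0$) and exhibiting concrete matrix entries in each. You instead observe that the coefficient action is exactly the congruence action $F\mapsto P^{\top}FP$ on the symmetric matrix $F=\left(\begin{smallmatrix}\alpha & \sigma/2\\ \sigma/2 & \delta\end{smallmatrix}\right)$, $\sigma=\beta+\gamma$, with $P=\left(\begin{smallmatrix} d & c\\ b & a\end{smallmatrix}\right)$, while $\Lambda_1$ is carried along unchanged; this identity does hold, but note it uses $ad-bc=1$ (the $(1,2)$-entry of $P^{\top}FP$ is $\alpha cd+\delta ab+\tfrac{\sigma}{2}(ad+bc)$, which equals $\sigma'/2$ precisely because $ad+bc=2bc+1$), so it is a feature of $\slr$ rather than of $\mathrm{GL}_2(\mathbb{R})$. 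Granting that, transitivity reduces to the classical fact that $\slr$-congruence is transitive on indefinite binary forms of fixed negative determinant ($\det F=-\tfrac14\Lambda_2$), which you realise concretely through the null-direction reduction, and the residual two-fold ambiguity (the reciprocal pair $\lambda_0,1/\lambda_0$, coming from the sign of $\sigma$ after reduction, equivalently the sign of $\Lambda_1$) is bridged by $Q$, whose curvature action $k_i\mapsto -1/k_i$ indeed swaps $k_2=\lambda k_1$ and $k_2=\lambda^{-1}k_1$. What the two approaches buy is complementary: the paper's computation produces explicit transformation matrices (formulas for $a,d$ in terms of $c$), which is convenient for constructing examples; your argument explains structurally \emph{why} transitivity holds and why the invariant is $\Lambda_1^2$ rather than $\Lambda_1$ (the sign is lost only through the overall sign of the relation), it makes the invariance statements of \Cref{prop: slr preserve qw} transparent ($\Lambda_2'=\Lambda_2$ because $\det P=1$), it avoids the paper's case analysis entirely, and it effectively pre-proves the reduction to $k_2=\lambda k_1$ that the paper only obtains later in \Cref{thm: slr on QW to pure lin}. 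The one bookkeeping point you flag --- that the reduction stays inside $\slr$ --- is handled correctly: the column rescaling $\mathrm{diag}(s,t)$ with $st=(\det P_0)^{-1}$ fixes the determinant, the constraint $\det F=-\tfrac14$ then pins the off-diagonal entry to $\pm\tfrac12$, and the involution $M\mapsto P$ of $\slr$ guarantees every congruence is induced by an actual transformation of relations.
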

\begin{proof} 
Fix the value of $\Lambda_1$ and let $\Lambda_2=1$. Note that by re-arrangement of equation (\ref{eq: quadratic Weingarten relationship}), $\Lambda_1$ can always be assumed positive and therefore it can be assumed that $\Lambda_1$ is conserved under the $\slr$ transformations rather than $\Lambda_1^2$. Let $(\alpha,\beta,\gamma,\delta)$ be the parameters of the initial relation, and $(\alpha',\beta',\gamma',\delta')$ be the parameters of the target relation. To prove transitivity one just needs to show that the system of equations (\ref{eq: alpha' equ.})-(\ref{eq: delta' equ.}) has a solution $(a,b,c,d)\in \mathbb{R}^4$, with $ad-bc=1$, for any choice of the two 4-tuples $(\alpha,\beta,\gamma,\delta),(\alpha',\beta',\gamma',\delta')\in\mathbb{R}^4$ satisfying $\Lambda_1=\Lambda_1'$ and $\Lambda_2=\Lambda'_2=1$. First substitute $\Lambda_1=\beta'-\gamma'=\beta-\gamma$ 
into equations (\ref{eq: alpha' equ.})-(\ref{eq: delta' equ.}) to remove the parameters $\beta$ and $\beta'$. If one considers $\slr$ transformations for which $c\neq0$, the constraint $ad-bc=1$ can be used to eliminate $b$ giving a system of equations relating the initial coefficients with the target coefficients:
\begin{align} \label{eq: alpha' with invar.}
    &\alpha'=\frac{1}{c^2}\left[\alpha c^2d^2+\delta(ad-1)^2+(\Lambda_1+2\gamma)cd(ad-1)\right],\\\label{eq: gamma' with invar.}
    &c\gamma'=\alpha c^2d+\left(\delta a+(\Lambda_1+2\gamma)c\right)(ad-1)+\gamma c,\\ \label{eq: delta' with invar.}
    &\delta'=\alpha c^2 + \delta a^2+(\Lambda_1+2\gamma)ac.
\end{align}
 The equations (\ref{eq: alpha' with invar.}), (\ref{eq: gamma' with invar.}) and (\ref{eq: delta' with invar.}) correspond to (\ref{eq: alpha' equ.}), (\ref{eq: beta' equ.}-\ref{eq: gamma' equ.}), and (\ref{eq: delta' equ.}) respectively. Transitivity is proven by considering 2 separate cases.
{\flushleft \bf \underline{Case: $\delta' \neq 0$.}}\hfill\\

 Taking $c\neq0$, solving the equation $\Lambda_2=\Lambda_2'$ for $\alpha'$ gives
 \[
\alpha'=\frac{(\Lambda_1+2\gamma')^2-(\Lambda_1+2\gamma)^2+4\alpha\delta}{4\delta'},
\]
Substituting equations (\ref{eq: gamma' with invar.}) and (\ref{eq: delta' with invar.}) into the above implies equation (\ref{eq: alpha' with invar.}), hence equations (\ref{eq: gamma' with invar.}) and (\ref{eq: delta' with invar.}) form an under-determined system
\begin{align}
\label{eq: gamma' and delta'}
    &c\gamma'=\delta'd-\delta a -(\Lambda_1+\gamma)c,
    &\delta'=\alpha c^2 + \delta a^2+(\Lambda_1+2\gamma)ac,
\end{align}
where we have re-written the equation for $\gamma'$, removing the $\alpha c^2d$ term by virtue of equation (\ref{eq: delta' with invar.}) to make calculations easier. This system can be solved to give $a$ and $d$ in terms of $c$: If $\delta \neq 0$, then the solutions are given by
\begin{align*}
    &a=\frac{1}{2\delta}\left(-(\Lambda_1+2\gamma)c\pm \sqrt{c^2+4\delta\delta'}\right), & d=\frac{1}{2\delta'}\left((\Lambda_1+2\gamma')c\pm \sqrt{c^2+4\delta\delta'}\right).
\end{align*}
The parameters $a$ and $d$ can always be taken to be real by taking a sufficiently large $c$, and $b$ is determined by $ad-bc=1$. If $\delta=0$, then the solutions are
\begin{align*}
    &a=\frac{\delta'-\alpha c^2}{c(\Lambda_1+2\gamma)}, & d=\frac{c}{\delta'}(\Lambda_1+\gamma+\gamma'),
\end{align*}
noting that when $\delta=0$, $(\Lambda_1+2\gamma)^2=(\beta+\gamma)^2=\Lambda_2\neq0$. Hence we have found an $\slr$ transformation taking $(\alpha,\beta,\gamma,\delta)$ to $(\alpha',\beta',\gamma',\delta')$ when $\delta'\neq0$.
{\flushleft \bf \underline{Case: $\delta'=0$.}}\hfill\\

Note that if $\delta \neq 0$, by the above case there exists an $\slr$ transformation sending $(\alpha',\beta',\gamma',0)$ to $(\alpha,\beta,\gamma,\delta)$. Taking the inverse transformation proves transitivity when $\delta \neq 0$. Now assume $\delta=0$. Any normalised relationship for which $\delta=0$ must be of the form
\[
\alpha k_1k_2 + \frac{1}{2}(\Lambda_1\pm1)k_1 +\frac{1}{2}(-\Lambda_1\pm1)k_2=0,
\]
since $\beta$ and $\gamma$ must solve $\beta-\gamma=\Lambda_1$ and $(\beta+\gamma)^2=\Lambda_2=1$ simultaneously. Thus $\delta=\delta'=0$ implies that $\gamma$ and $\gamma'$ take either of the values $\frac{1}{2}(-\Lambda_1\pm 1)$ and so for this sub-case, all initial and target relations must be of the respective forms
\[
\left(\alpha,\frac{1}{2}(\Lambda_1\pm 1),\frac{1}{2}(-\Lambda_1\pm 1),0\right), \qquad \left(\alpha',\frac{1}{2}(\Lambda_1\pm 1),\frac{1}{2}(-\Lambda_1\pm 1),0\right).
\] If an $\slr$ transformation is taken with $c\neq0$, equations (\ref{eq: alpha' with invar.}) and (\ref{eq: delta' with invar.}) are solved for $a$ and $d$ to give
\begin{align}
&a=-\frac{\alpha c}{\Lambda_1+2\gamma}, &d=-\frac{\alpha' c}{\Lambda_1+2\gamma}.
\end{align}
The remaining equation (\ref{eq: gamma' with invar.}) implies that $\gamma=\frac{1}{2}(-\Lambda_1\pm 1)$ and $\gamma'=\frac{1}{2}(-\Lambda_1\mp 1)$.
On the other-hand, taking $c=0$ fixes $\gamma=\gamma'$ which can be seen by equation (\ref{eq: gamma' equ.}). Equation (\ref{eq: alpha' equ.}) can then be solved for $d$ in terms of $b$, if $\alpha \neq 0$, then; 
\begin{align*}
    &d=\frac{1}{2\alpha}\left(-(\Lambda_1+2\gamma)b\pm \sqrt{b^2+4\alpha\alpha'}\right),
\end{align*}
taking $b$ sufficiently large implies $d\in\mathbb{R}$, $a$ is then determined by $ad-bc=1$. Alternatively, if $\alpha=0$ then solving equation (\ref{eq: alpha' equ.}) gives
\[
d=\frac{\alpha'}{b(\Lambda_1+2\gamma)}.
\]
Therefore we have found $\slr$ transformations taking $$\left(\alpha,\frac{1}{2}(\Lambda_1\pm 1),\frac{1}{2}(-\Lambda_1\pm 1),\delta \right) \mapsto \left(\alpha',\frac{1}{2}(\Lambda_1\mp 1),\frac{1}{2}(-\Lambda_1\mp 1),\delta'\right),$$
if $c \neq 0$, or
$$\left(\alpha,\frac{1}{2}(\Lambda_1\pm 1),\frac{1}{2}(-\Lambda_1\pm 1),\delta \right) \mapsto \left(\alpha',\frac{1}{2}(\Lambda_1\pm 1),\frac{1}{2}(-\Lambda_1\pm 1),\delta'\right),$$
if $c=0$, which together describe all possible transformations between relations of type $\delta'=\delta=0$.\\

Since any pair of initial and target relationships fall into one of the above cases, transitivity has been proven.
\end{proof}
\vspace{0.1in}
The above result is useful in classifying semi-quadratic Weingarten surfaces based on their $\Lambda_1$ and $\Lambda_2$ values. We first consider a special case.
\vspace{0.1in}
\begin{Thm}\label{thm: parabolic QW are canal}
    Let $\mathcal{S}$ be a connected rotationally symmetric semi-quadratic Weingarten surface for which $\Lambda_1^2=\Lambda_2$. Then $\mathcal{S}$ is a subset of a torus of revolution, round sphere, plane, cone or cylinder.
\end{Thm}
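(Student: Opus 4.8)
The plan is to exploit the fact that the hypothesis $\Lambda_1^2=\Lambda_2$ is exactly the condition under which the semi-quadratic relation splits into linear factors. First I would record the algebraic identity: expanding $\Lambda_1^2=\Lambda_2$ gives $(\beta-\gamma)^2=(\beta+\gamma)^2-4\alpha\delta$, which reduces to $\beta\gamma=\alpha\delta$. Note that this says precisely that the $\slr$-invariant $\Lambda_1^2/\Lambda_2$ (from \Cref{prop: slr preserve qw}) equals $1$. Algebraically it is the condition for the bilinear form $\alpha k_1k_2+\beta k_1+\gamma k_2+\delta$ to factor over $\mathbb{R}$: when $\alpha\neq 0$ it equals $\alpha(k_1+\gamma/\alpha)(k_2+\beta/\alpha)$, and when $\alpha=0$ the same condition forces $\beta\gamma=0$, so the relation is already linear in a single curvature. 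In every case the relation is equivalent to the pointwise disjunction ``$k_1=c_1$ or $k_2=c_2$'' for two constants $c_1,c_2$ (permitting the value $0$, i.e. $r_i=\infty$, to accommodate flat points).

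Next, working on the non-flat locus where the Gauss-angle parametrisation of Section \ref{sec: background} is valid, I would use connectedness to confine the surface to a single branch. Let $E\subset I$ be the closed set of Gauss angles at which $k_2=c_2$. On the open complement $k_2\neq c_2$, so $k_1\equiv c_1$, whence $r_1$ is constant and the Codazzi-Mainardi equation (\ref{eq: differentiated Codazzi-Mainardi}) forces $r_2=r_1$ away from $\theta=\pi/2$, i.e. $k_2=c_1$ there. If $c_1\neq c_2$, continuity of $k_2$ at a boundary point of the complement yields $c_2=c_1$, a contradiction; hence $E$ is all of $I$ or empty, so a single constant-curvature condition holds globally. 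In the coincident case $c_1=c_2$ (the $\Lambda_1=0$, $\Lambda_2=0$ subcase where the two factors collapse) the same conclusion follows by matching the constant of integration at any putative junction.

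It then remains to integrate each branch. If $r_2\equiv R_2$ is constant, I would solve the linear ODE $\td r_1/\td\theta=(R_2-r_1)\cot\theta$ coming from Codazzi-Mainardi to get $r_1=R_2-A\csc\theta$, and then use $\rho=r_1\sin\theta$ together with $\td h/\td\theta=-r_2\sin\theta$ (\Cref{prop: curvatures in terms of r}) to obtain $(\rho+A)^2+(h-B)^2=R_2^2$. Thus the profile curve is a circular arc of radius $R_2$, and revolving it yields a round sphere (centre on the axis, $A=0$) or a torus of revolution ($A\neq 0$). If instead $r_1$ is constant, Codazzi-Mainardi immediately forces total umbilicity and hence a round sphere. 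Finally I would treat the flat cases inside the compactified RoC framework: a constant value $c_i=0$ corresponds to a vanishing principal curvature, giving $r_2=\infty$ (straight meridian: cone or cylinder) or $r_1=r_2=\infty$ (plane), and one checks directly that each satisfies the relation for the corresponding coefficient choice.

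The main obstacle I anticipate is not the integration, which is elementary, but the bookkeeping at the degenerate boundary: making the connectedness argument robust in the coincident case $c_1=c_2$, and correctly incorporating the flat points, where the $K\neq 0$ parametrisation of Section \ref{sec: background} breaks down, so that cones, cylinders and planes are genuinely captured rather than silently excluded from the list.
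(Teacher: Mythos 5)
Your core route is correct and genuinely different from the paper's. You notice that $\Lambda_1^2=\Lambda_2$ is equivalent to $\beta\gamma=\alpha\delta$, which is precisely the condition for the semi-quadratic form to factor, $\alpha k_1k_2+\beta k_1+\gamma k_2+\delta=\alpha\left(k_1+\tfrac{\gamma}{\alpha}\right)\left(k_2+\tfrac{\beta}{\alpha}\right)$ when $\alpha\neq0$ (collapsing to a single linear factor when $\alpha=0$), so the relation becomes the pointwise disjunction $k_1=c_1$ or $k_2=c_2$; your closed/open argument with the Codazzi--Mainardi equation (\ref{eq: differentiated Codazzi-Mainardi}) then upgrades this to constancy of one principal curvature on a Gauss-angle interval, and the explicit integration ($r_1=R_2-A\csc\theta$, circular profile curve) identifies the pieces. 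The paper proceeds differently: it treats $\Lambda_2=0$ separately (there it uses, in effect, your factorization in LW form), and for $\Lambda_2>0$ it invokes the transitivity of the $\slr$ action (\Cref{prop: sl2 acts transitively on QW}) to map the relation to $\tfrac12(\Lambda_1\pm1)k_1+\tfrac12(-\Lambda_1\pm1)k_2=0$, where one coefficient vanishes because $\Lambda_1^2=1$; one curvature of the image surface is then identically zero, so one curvature of $\mathcal{S}$ is constant, and the paper quotes the classification of surfaces with a constant principal curvature rather than integrating. Your version is more elementary and unifies the two cases, needing neither the transitivity proposition nor the external classification; the paper's version is shorter on the page because it reuses machinery it has already built and deploys again in \Cref{thm: slr on QW to pure lin}.

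The genuine gap is the mixed flat/non-flat case, which you flag as ``bookkeeping'' but do not resolve. Your branch argument lives on a single Gauss-angle interval of the locus $K\neq0$, and your flat discussion covers only surfaces that are flat throughout; as written, nothing excludes a connected surface assembled from a sphere or torus piece and an open developable piece, nor does anything force the constants on different components of $\{K\neq 0\}$ to agree (a torus of revolution already has two such components, separated by its two flat parallels). This is where the paper's proof does real work: it partitions $\mathcal{S}=\mathring{\mathcal{S}}_0\cup\partial\mathcal{S}_0\cup\mathcal{S}_\pm$, observes that $k_2\equiv 0$ on every component of $\mathring{\mathcal{S}}_0$ (developable pieces) while $k_2$ equals a nonzero constant $c_n$ on each component $V_n$ of $\mathcal{S}_\pm$ (your sphere/torus pieces), and concludes that otherwise $k_2(\mathcal{S})$ would be a countable set containing at least two points, hence disconnected, contradicting continuity of $k_2$ on the connected surface $\mathcal{S}$. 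The same image-connectedness argument forces all the $c_n$ to coincide when $\mathring{\mathcal{S}}_0=\emptyset$. Appending this step (or an equivalent continuity-of-curvature matching across the flat locus) to your plan makes it a complete proof.
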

\begin{proof}
    Note that since $\Lambda_2=\Lambda_1^2\geq0$, either $\Lambda_2=0$ or $\Lambda_2>0$. Taking $\Lambda_2=0$ implies $\Lambda_1=0$, which gives a LW relationship:
    \[
\lambda K+\Upsilon H+C=0,
    \]
for some constants $\lambda,\Upsilon,C\in\mathbb{R}$. It has been remarked in \cite{lopez08 hyp} that LW relationships for which
\[
\Upsilon^2-4\lambda C=0
\]
characterise either tubular surfaces or planes (that is in the rotational case, cylinders, tori of revolution or planes). It is observed that for LW surfaces, $\Lambda_2=\Upsilon^2-4\lambda C$ and so the $\Lambda_2=0$ case has already been proven. Now consider the $\Lambda_2>0$ case.\\

We first recall the following fact: Every connected component of a surface with a constant principal curvature is a subset of either a round sphere, a tube over a curve (if the constant principal curvature is non-zero)\cite{st70} or a developable surface (if a principal curvature is constantly zero). A connected surface of revolution with constant principal curvatures must therefore be a subset of a torus of revolution or a round sphere (if $K$ is not identically zero) or  a subset of a plane, cone or cylinder (if $K\equiv 0$).\\

Our strategy of proof will be to, assuming $\mathcal{S}$ is semi-quadratic, show that one of the principal curvatures of $\mathcal{S}$ is constant. We first assume $K$ is nowhere vanishing on $\mathcal{S}$, i.e. $\mathcal{S}$ is non-flat. Assume for contradiction that both $k_1$ and $k_2$ are non constant. Take a $\slr$ transformation of $\mathcal{S}$, whose relation we may assume w.l.o.g. to be normalised, to another semi-quadratic surface $\widetilde{\mathcal{S}}$ sharing the same invariant $\Lambda_1$ and satisfying either of the Weingarten relations
    \begin{equation}\label{eq: transformed equ canal.}
     \frac{1}{2}\left(\Lambda_1\pm 1\right)\widetilde k_1+\frac{1}{2}\left(-\Lambda_1\pm 1\right)\widetilde k_2=0,
 \end{equation}
 where the principal curvatures $\widetilde k_i$ of $\widetilde{\mathcal{S}}$ are related to the principal curvatures $k_i$ of $\mathcal{S}$ by
 \begin{align*}
&\widetilde k_i=\frac{c+dk_i}{a+bk_i}, &\text{for some }\begin{pmatrix}
    a & b \\ c & d
\end{pmatrix}\in \slr,
 \end{align*}
 $i=1,2$. Note that by assumption $k_1$ and $k_2$ are not identically equal to $-a/b$, hence equation (\ref{eq: transformed equ canal.}) is satisfied on all finite parts of $\mathfrak{R}(\widetilde{\mathcal{S}})$. Since $\Lambda_1^2=1$ however, equation (\ref{eq: transformed equ canal.}) implies at least one of the principal curvatures of $\widetilde{\mathcal{S}}$ must be identically zero, implying one of the principal curvatures of $\mathcal{S}$ takes the constant value $-c/d$ which is a contradiction. Now consider the more general case that $K$ vanishes on parts of $\mathcal{S}$ and let
 \begin{align*}
&\mathcal{S}_0={K}^{-1}\{0\}, &\mathcal{S}_\pm={K}^{-1}(\mathbb{R}\backslash\{0\}),
 \end{align*}
 so that $\mathcal{S}$ can be partitioned as $\mathcal{S}=\mathcal{S}_0\cup \mathcal{S}_\pm$. We will show that $\mathcal{S}_0\neq\emptyset$ and $\mathcal{S}_\pm\neq\emptyset$ cannot hold simultaneously. Assume otherwise, then since $\mathcal{S}_\pm$ is open it is a sub-surface so has a countable number of connected components which we denote as $V_n$, $n\in\mathbb{N}$. Each $V_n$ is connected and non-flat and so the repeating the argument previously given shows that $k_2$ takes a constant value on each $V_n$, denoted by $c_n$. Therefore
\begin{align*}
k_2(\mathcal{S})=k_2(\mathcal{S}_0)\cup k_2(\mathcal{S}_\pm)=k_2(\mathcal{S}_0)\cup  \bigcup_{n=1}^\infty k_2(V_n)=\{0\}\cup \bigcup_{n=1}^\infty \{c_n\},
\end{align*}
implying $k_2(\mathcal{S})$ is disconnected, contradicting the connectedness of $\mathcal{S}$ or the continuity of $k_2$. Hence one of $\mathring{\mathcal{S}}_0$ or $\mathcal{S}_\pm$ is empty. The $\mathcal{S}_0=\emptyset$ case was considered above. On the other hand if $\mathcal{S}_\pm=\emptyset$ then $K\equiv 0$ on $\mathcal{S}$ and $\mathcal{S}$ is a plane, cone or cylinder.

\end{proof}
\vspace{0.1in}
The main theorem of this section is motivated by the classification result given in \cite{pampano20} for surfaces of revolution satisfying the relation
\begin{equation}\label{eq: pure k linear.}
    k_2=\lambda k_1.
\end{equation}
\begin{Thm}[\cite{pampano20}, Theorem 4.1, page 15]\label{thm: pampano result}\hfill\newline
 \begin{enumerate}
     \item If $\lambda>0$, the surfaces satisfying (\ref{eq: pure k linear.}) are either planes, closed surfaces with a convex profile curve, or subsets thereof. In the special case $\lambda=1$, the only solutions are round spheres or subsets thereof.
     \item If $\lambda<0$, the surfaces satisfying (\ref{eq: pure k linear.}) are either planes or open, catenoid-like surfaces with a convex profile curve, or subsets thereof.
 \end{enumerate}
 \end{Thm}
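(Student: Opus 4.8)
The plan is to prove the classification by directly integrating the relation with the Codazzi--Mainardi machinery, treating $k_2=\lambda k_1$ as a degenerate linear Hopf relation and reading off the global geometry from the resulting explicit profile curve. First I would rewrite $k_2=\lambda k_1$ in terms of radii of curvature: since $k_i=1/r_i$, it is equivalent to $r_2=\Lambda r_1$ with $\Lambda=1/\lambda$, i.e. the linear Hopf relation (\ref{eq: lin hopf surf}) with slope $\Lambda$ and $C=0$. By Example \ref{ex: hopf sphere r1} the equation (\ref{eq: differentiated Codazzi-Mainardi}) then integrates, for $\Lambda\neq 1$, to
\[
r_1(\theta)=A\sin^{\Lambda-1}\theta,\qquad r_2(\theta)=\Lambda A\sin^{\Lambda-1}\theta,\qquad A\in\mathbb{R}\setminus\{0\},
\]
obtained from (\ref{eq: hopf rad of curv}) by absorbing the factor $(1-\Lambda)^{-1}$ into the constant. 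The one-parameter freedom in $A$ together with the constant of integration appearing below accounts for the phrase ``or subsets thereof'' and the translation along the axis. The case $\Lambda=1$ (that is $\lambda=1$) is degenerate: the ODE forces $r_1\equiv r_2\equiv A$, so $\mathcal{S}$ is totally umbilic with constant finite radius and hence a round sphere, which is exactly the special case asserted in part (1).

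Next I would recover the profile curve via Proposition \ref{prop: curvatures in terms of r}, equation (\ref{eq: rho in terms of r1}):
\[
\rho(\theta)=r_1\sin\theta=A\sin^{\Lambda}\theta,\qquad \dv{h}{\theta}=-r_2\sin\theta=-\Lambda A\sin^{\Lambda}\theta.
\]
Taking $A>0$ without loss of generality (reflecting the orientation), the sign of $r_2=\Lambda A\sin^{\Lambda-1}\theta$ is constant on $(0,\pi)$. Since $r_2$ is precisely the radius of curvature of the profile curve, this means the profile curve never reverses the direction in which it turns, which is exactly the statement that it is convex; this holds in every case and so disposes of the convexity claim uniformly.

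The global shape then follows by splitting on the sign of $\lambda$. When $\lambda>0$ (so $\Lambda>0$) one has $\rho(\theta)\to 0$ as $\theta\to 0,\pi$, while $h(\theta)=-\Lambda A\int^{\theta}\sin^{\Lambda}\tau\,\td\tau$ converges at both endpoints because $\int_{0}\sin^{\Lambda}\tau\,\td\tau<\infty$ for $\Lambda>-1$; hence the profile curve begins and ends on the axis at finite height and the surface of revolution closes up, giving the closed convex surfaces of part (1). When $\lambda<0$ (so $\Lambda<0$) the same formula gives $\rho(\theta)\to\infty$ as $\theta\to 0,\pi$ with a single minimal neck at $\theta=\pi/2$, and since $K=k_1k_2=\lambda k_1^2<0$ the surface is of saddle type, i.e. an open catenoid-like surface, establishing part (2).

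The main obstacle I anticipate is the careful endpoint analysis at the poles: one must confirm not only that $\rho\to 0$ but that $h$ converges there, via the convergence of $\int_{0}\sin^{\Lambda}\tau\,\td\tau$ for $\Lambda>-1$ (which covers all $\lambda>0$), so that the closed surface is genuinely capped; and one must track which subrange of $\Lambda$ produces a flat versus a strictly convex pole, consistent with the convex/strictly-convex dichotomy visible in the linear Hopf examples. A secondary point to handle cleanly is that the $K\neq 0$ hypothesis underlying the $(r_1,r_2)$ description fails only at the isolated flat poles arising when $0<\Lambda<1$, where one argues by continuity that the explicit profile curve still traces a regular surface of revolution. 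With these endpoint facts in hand, the remaining verifications --- convexity of the profile and the catenoid-like description --- reduce to the elementary sign checks above.
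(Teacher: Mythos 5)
First, a structural point: the paper does not prove this statement at all --- it is quoted verbatim from \cite{pampano20} (Theorem 4.1 there), so there is no internal proof to compare you against. Your blind attempt is therefore an independent reconstruction, and it is a sound one: it assembles exactly the machinery this paper already has on hand. Indeed, your explicit solution $r_1=A\sin^{\Lambda-1}\theta$, $\Lambda=1/\lambda$, is precisely equation (\ref{eq: hopf rad of curv}) with $C=0$ as re-derived in \Cref{prop: homothety relates pure k lin}, and the profile recovery via $\rho=r_1\sin\theta$ and $\mathrm{d}h/\mathrm{d}\theta=-r_2\sin\theta$ is equation (\ref{eq: rho in terms of r1}). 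The original source proceeds quite differently --- by a variational characterisation of the profile curves (as the paper itself notes before (\ref{eq: intro, functional to optimize})) --- so your direct Gauss-angle integration is the more elementary route and stays entirely inside this paper's toolkit; what it gives up is the finer information about the profile curves that the variational method yields. Your endpoint analysis is correct: for $\lambda>0$ one has $\Lambda>0>-1$, so $\int_0\sin^{\Lambda}\tau\,\mathrm{d}\tau$ converges, the profile meets the axis at finite height with horizontal tangent (the tangent direction is slaved to $\theta$), and convexity is automatic since parameterisation by the normal angle forces monotone turning; the $\lambda=1$ case correctly collapses to the totally umbilic one.

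Two loose ends deserve to be named, though neither is fatal. First, completeness: integrating the ODE presupposes the surface is globally parameterised by the Gauss angle, i.e.\ $K\neq0$. To claim these are the \emph{only} solutions you should add that, for $\lambda\neq0$, interior flat points cannot occur on the non-flat part (since $r_1=A\sin^{\Lambda-1}\theta$ is finite on $(0,\pi)$, flat points can only arise in the limit $\theta\to0,\pi$), and that the only totally flat rotational solutions are planes, which satisfy $k_2=\lambda k_1$ degenerately for every $\lambda$ and are tacitly excluded by the cited classification. Second, pole regularity: near $\theta=0$ the surface is a graph $h-h_0\sim c\,\rho^{1+1/\Lambda}$ over its tangent plane, so for $0<\lambda<1$ (i.e.\ $\Lambda>1$) the exponent lies strictly between $1$ and $2$ and the closed surface is only $C^1$ at its poles --- ``closed'' survives, but this is exactly the flat-versus-strictly-convex pole dichotomy you flag in your final paragraph, and it should be stated rather than deferred. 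A smaller remark: for $\lambda<-1$ one has $-1<\Lambda<0$, so the height integral converges and the two ends approach finite heights with $\rho\to\infty$, unlike a genuine catenoid's ends; this is harmless for ``catenoid-like'' (openness follows since $\mathrm{d}h/\mathrm{d}\theta=-r_2\sin\theta$ has fixed sign, making $h$ monotone and the surface embedded), but the asymptotics do split at $\lambda=-1$ and a careful write-up should say so.
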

 \vspace{0.1in}
 In the case $\lambda=0$, the above Weingarten surfaces are developable, i.e. planes, cones or cylinders. Surfaces satisfying equation (\ref{eq: pure k linear.}) are related in $\mathbb{E}^3$ as follows.
\vspace{0.1in}
\begin{Lem}\label{prop: homothety relates pure k lin}
 All rotationally symmetric, non-planar, surfaces satisfying the relation (\ref{eq: pure k linear.}) for a fixed $\lambda \neq 0$ are related by a homothety.
\end{Lem}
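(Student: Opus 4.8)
The plan is to reduce the relation to the radii of curvature, solve the resulting Codazzi--Mainardi ODE explicitly, and then recognise the full solution set as a single orbit of the homothety subgroup $A$. First I would rewrite $k_2=\lambda k_1$ in terms of radii as $r_2=r_1/\lambda$, which is the linear Hopf relation (\ref{eq: lin hopf surf}) with slope $1/\lambda$ and $C=0$. Substituting this into the Codazzi--Mainardi equation (\ref{eq: differentiated Codazzi-Mainardi}) gives the separable ODE $\dv{r_1}{\theta}=\left(\tfrac{1}{\lambda}-1\right)r_1\cot\theta$, exactly as in \Cref{ex: hopf sphere r1}, whose solution is $r_1(\theta)=A_0\sin^{1/\lambda-1}\theta$ for a constant $A_0\in\mathbb{R}$, with $r_2=r_1/\lambda$. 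Thus, up to translation along the axis of symmetry (the additive $C\cos\theta$ freedom in recovering the support function from (\ref{eq: support from r1})), the surfaces satisfying (\ref{eq: pure k linear.}) for a fixed $\lambda$ form a one-parameter family indexed by $A_0$.

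Next I would bring in the homothety action. By \Cref{prop: A is homothety} (equivalently \Cref{prop: coordinate transform} applied to $M=\operatorname{diag}(\omega,1/\omega)\in A$), the homothety of factor $\omega^2$ fixes the Gauss angle, $\tilde\theta=\theta$, and scales the radii of curvature by $(r_1,r_2)\mapsto(\omega^2 r_1,\omega^2 r_2)$. Since it scales both radii equally it preserves their ratio, hence preserves the relation $k_2=\lambda k_1$; concretely it sends the parameter $A_0\mapsto\omega^2 A_0$. As $\omega^2$ ranges over all of $(0,\infty)$, any two solutions whose constants $A_0$ have the same sign are carried one onto the other, which establishes the claim.

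The step needing the most care is the bookkeeping around the axial translation and the sign of $A_0$. Because the support function is determined by $r_1$ only up to an additive $C\cos\theta$, two members of the family with the same $A_0$ may sit at different heights along the axis; the homothety of factor $\omega^2=A_0'/A_0$ about the origin matches their RoC diagrams and their Gauss-angle ranges (as $\tilde\theta=\theta$), but only up to such a translation, so to obtain an exact congruence one conjugates by the appropriate axial translation, i.e. takes the homothety about a suitable point on the axis of symmetry. The sign ambiguity in $A_0$ corresponds to an orientation/reflection and, since a homothety factor $\omega^2$ is positive, is dealt with separately or absorbed by the convention $r_1>0$ coming from strict convexity. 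I expect these reductions to be routine once the explicit solution $r_1(\theta)=A_0\sin^{1/\lambda-1}\theta$ is in hand.
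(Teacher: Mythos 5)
Your proof is correct and follows essentially the same route as the paper's: solve the Codazzi--Mainardi ODE to get $r_1(\theta)=A_0\sin^{1/\lambda-1}\theta$, note that all solutions for fixed $\lambda$ then differ by a rescaling $(r_1,r_2)\mapsto(Cr_1,Cr_2)$ of the RoC diagram, and identify that rescaling with a homothety via \Cref{prop: A is homothety}. If anything you are more careful than the paper's own proof, which passes silently from the RoC-diagram statement to the surface statement; your closing remarks on the axial-translation freedom (recentering the homothety on the axis) and on the sign of $A_0$ (orientation reversal, yielding the same point set) address precisely the details the published argument glosses over.
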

\begin{proof}
The radii of curvature of a non-planar solution of relation (\ref{eq: pure k linear.}) may be assumed to be finite as $\lambda \neq 0$. Thus, the $r_1$ radius of curvature of such a surface is given by equation (\ref{eq: hopf rad of curv}) in \Cref{ex: hopf sphere r1} with a slight change in notation $\lambda \mapsto 1/\lambda$ and by setting $C=0$.
\begin{align}\label{eq: pure k linear r_1}
    &r_1(\theta)=\frac{\lambda A_0 \sin^{\frac{1}{\lambda}-1}\theta}{\lambda-1}, & \lambda \neq 0,1.\\
    &r_1(\theta)=\text{ constant}, &\lambda=1, \lambda \neq 0.
\end{align}
where $A_0$ is a constant of integration and has the effect of scaling $r_1$. Since $r_2=\frac{1}{\lambda}r_1$, the curvatures of every possible surface of revolution satisfying relation (\ref{eq: pure k linear.}) for a fixed $\lambda\neq0$ are related by a map
\[
(r_1,r_2)\mapsto(Cr_1,Cr_2),
\]
$C\in\mathbb{R}\backslash\{0\}$. Therefore such surfaces are related by a homothety.
\end{proof}
\vspace{0.1in}
\begin{Cor}\label{cor: hopf spheres are par trans and homo of unit}
    Any two rotationally symmetric linear Hopf surfaces satisfying a given relation (\ref{eq: lin hopf surf}) may be related to each other geometrically by conjugating a homothety $h$ with a parallel translation $p$, i.e. by the map $p\circ h \circ p^{-1}$.
\end{Cor}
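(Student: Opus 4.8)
The plan is to reduce the statement to the proportional-radii case already settled in Lemma~\ref{prop: homothety relates pure k lin} by first stripping the constant $C$ from the relation (\ref{eq: lin hopf surf}) with a parallel translation, applying the homothety lemma, and then reinstating $C$ by conjugation. Throughout I would assume $\lambda\neq 1$, the generic case; the values $\lambda=0,1$ are degenerate and are dealt with separately at the end.

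First I would record the two building blocks on RoC space. By Proposition~\ref{prop: N is par translations} a parallel translation $p$ acts as $T_N$, that is $(r_1,r_2)\mapsto(r_1+v,r_2+v)$ for some $v\in\mathbb{R}$, while by Proposition~\ref{prop: A is homothety} a homothety $h$ acts as $T_A$, that is $(r_1,r_2)\mapsto(\omega^2 r_1,\omega^2 r_2)$. Choosing the offset $v=C/(1-\lambda)$ and applying $p^{-1}$, which sends $(r_1,r_2)\mapsto(r_1-v,r_2-v)$, to a surface satisfying $r_2=\lambda r_1+C$ produces a surface whose radii obey $r_2=\lambda r_1$, equivalently $k_2=\lambda^{-1}k_1$, namely the relation (\ref{eq: pure k linear.}). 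Thus, given two linear Hopf surfaces $\mathcal{S}_1,\mathcal{S}_2$ sharing the relation (\ref{eq: lin hopf surf}), the surfaces $\mathcal{S}_i'=p^{-1}(\mathcal{S}_i)$ both satisfy (\ref{eq: pure k linear.}) with the same slope $\lambda^{-1}\neq 0$, so Lemma~\ref{prop: homothety relates pure k lin} supplies a homothety $h$ with $\mathcal{S}_2'=h(\mathcal{S}_1')$. Composing, $\mathcal{S}_2=p(\mathcal{S}_2')=p\bigl(h(p^{-1}(\mathcal{S}_1))\bigr)=(p\circ h\circ p^{-1})(\mathcal{S}_1)$, which is the asserted form.

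It then remains to confirm that this chain is a genuine transformation of surfaces and not a mere manipulation of RoC diagrams; this is exactly what Theorem~\ref{thm: new surface form old with sl2r} and the decomposition of Section~\ref{sec: slr trans} provide, since $p,h\in\slr$ and the composite corresponds to the conjugate $NAN^{-1}=\left(\begin{smallmatrix}\omega & v(\omega^{-1}-\omega)\\ 0 & \omega^{-1}\end{smallmatrix}\right)$, whose induced RoC action is the homothety $(r_1,r_2)\mapsto(\omega^2 r_1+v(1-\omega^2),\omega^2 r_2+v(1-\omega^2))$ centred at the umbilic point $(v,v)$. A direct substitution confirms this fixes the line $r_2=\lambda r_1+C$ precisely when $v=C/(1-\lambda)$, so the image really is another surface of the same relation, and matching the free parameter $A_0$ in the explicit radius (\ref{eq: hopf rad of curv}) shows that the scaling factor realising $\mathcal{S}_2'=h(\mathcal{S}_1')$ is the ratio of the two values of $A_0$.

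The main obstacle is the degenerate range $\lambda\in\{0,1\}$, where the offset $v=C/(1-\lambda)$ is either unavailable ($\lambda=1$) or sends a principal radius identically to $0$ ($\lambda=0$); these are precisely the borderline surfaces governed by Theorem~\ref{thm: parabolic QW are canal}. For $\lambda=1$ the relation reads $r_2-r_1=C$, whose solutions differ only by the additive constant in $r_1$ and hence are related by a single parallel translation, so the conjugation collapses and must be replaced by $p$ alone; the case $\lambda=0$ is handled by the same homothety argument applied directly to the image relation $r_2\equiv 0$. A secondary point to dispatch is the sign of the scaling: the subgroup $A$ produces only positive factors $\omega^2$, so relating surfaces whose parameters $A_0$ carry opposite signs requires the negative-ratio homothety already permitted by Lemma~\ref{prop: homothety relates pure k lin}, i.e. a central similarity, which I would fold into the meaning of $h$.
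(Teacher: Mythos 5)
Your proposal is correct and follows the paper's own proof in essentially every particular: a parallel translation strips the constant $C$ from the relation (\ref{eq: lin hopf surf}), Lemma~\ref{prop: homothety relates pure k lin} supplies the homothety between the translated surfaces, and conjugating back gives $p\circ h\circ p^{-1}$. The only additions are your explicit $\slr$ matrix verification and your treatment of the degenerate values $\lambda\in\{0,1\}$ and of opposite signs of $A_0$, points which the paper's proof passes over silently.
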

\begin{proof}
    Given any two linear Hopf surfaces $S_1$ and $S_2$ satisfying
    \[
    r_2=\lambda r_1+C,
    \]
after applying a suitable parallel translation denoted $p$, their Weingarten relation is mapped to relation (\ref{eq: pure k linear.}). Let $\hat{S}_1$ and $\hat{S}_2$ be the images of $S_1$ and $S_2$ under $p$. Proposition \ref{prop: homothety relates pure k lin} implies $\hat{S}_1$ and $\hat{S}_2$ are related by a homothety $h$, hence $S_1$ and $S_2$ are related by
     \[
     S_1 \xrightarrow{p} \hat{S}_1 \xrightarrow{h} \hat{S}_2 \xrightarrow{p^{-1}} S_2,
     \]
$p^{-1}$ being a parallel translation.
\end{proof}
\vspace{0.1in}
Corollary \ref{cor: hopf spheres are par trans and homo of unit} says that any rotationally symmetric linear Hopf surface satisfying a fixed relation is a composition of a parallel translation and a homothety of a single, surface of revolution satisfying relation (\ref{eq: pure k linear.}). The following theorem generalises this idea to semi-quadratic Weingarten surfaces.

\vspace{0.1in}
\begin{Thm}\label{thm: slr on QW to pure lin}
Any non-flat rotationally symmetric, connected semi-quadratic Weingarten surface with $\Lambda_2>0$ is the image under a composition of homotheties, parallel translations and reciprocal transformations of a Weingarten surface satisfying the relation
\begin{equation*}
k_2=\lambda k_1,
\end{equation*}
for $\lambda>0$ when the surface is elliptic, or for $\lambda<0$ when the surface is hyperbolic.
\end{Thm}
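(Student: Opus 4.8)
The plan is to realise this theorem as an assembly of the transitivity result \Cref{prop: sl2 acts transitively on QW}, the integration theorem \Cref{thm: new surface form old with sl2r}, and the decomposition \Cref{thm: slr decomposition}. First I would normalise the relation of $\mathcal{S}$ so that $\Lambda_2=1$, leaving the orbit invariant $\Lambda_1^2$ fixed. The target relation $k_2=\lambda k_1$, written as $\lambda k_1-k_2=0$ and normalised, is itself semi-quadratic with $\alpha=\delta=0$, invariant $\Lambda_1^2=(\lambda+1)^2/(\lambda-1)^2$ and $\Lambda_2=1$. Setting its invariant equal to that of $\mathcal{S}$ gives a quadratic in $\lambda$ whose two roots are reciprocals of one another, reflecting the $\mu_p\mapsto 1/\mu_p$ freedom of \Cref{thm: properties of the slr}; solving it determines $\lambda$, and checking that the ellipticity condition of \Cref{prop: qw ellipticity condition} is preserved under the transformation (\Cref{thm: properties of the slr}, claim 2) pins the sign of $\lambda$ down according to whether $\mathcal{S}$ is elliptic or hyperbolic.

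With $\lambda$ chosen so that $k_2=\lambda k_1$ and the relation of $\mathcal{S}$ share the invariant $\Lambda_1^2$ and have $\Lambda_2=1$, \Cref{prop: sl2 acts transitively on QW} supplies an $M\in\slr$ whose coordinate transformation $T_M$ carries the normalised relation $k_2=\lambda k_1$ to that of $\mathcal{S}$. Next I would run the integration of \Cref{thm: new surface form old with sl2r} on a surface $\mathcal{S}_0$ satisfying $k_2=\lambda k_1$, which exists by \Cref{thm: pampano result}: its RoC diagram is carried by $T_M$ to that of $\mathcal{S}$, so the induced family $\widetilde{\mathcal{S}}_\mathcal{A}$ has the same RoC diagram as $\mathcal{S}$, and since a rotationally symmetric surface is determined by its RoC diagram up to the parametrisation constant $\mathcal{A}$ and a translation along the axis, $\mathcal{S}$ is recovered as one of these images. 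Finally, \Cref{thm: slr decomposition} expresses the induced $\mathbb{E}^3$ transformation of $M$ as a composition of parallel translations, homotheties and reciprocal transformations, which is exactly the claimed statement.

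The main obstacle I anticipate is not the algebra of matching invariants but the global realisation bookkeeping. Connectedness of $\mathcal{S}$ must be used, as in the proof of \Cref{thm: parabolic QW are canal}, to rule out the diagram splitting across the loci where $K$ vanishes or where $cr_i+d=0$ (points sent to infinity, where reciprocal transformations interchange cusps and flat points); one should partition $\mathcal{S}$ into its non-flat part and the zero set of $K$ and verify that the induced transformation is consistent and that the reparametrisation $\sin\tilde\theta=\mathcal{A}\sin\theta\,(cr_1+d)$ of \Cref{prop: param of image under sl2r} remains a valid, single-valued change of Gauss angle across the whole connected surface. Confirming the correct sign of $\lambda$ via ellipticity preservation, and confirming that $\mathcal{S}$ itself (rather than merely some surface with its diagram) is genuinely an image under the induced maps, are the remaining delicate points.
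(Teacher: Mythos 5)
For the generic case your route is essentially the paper's own: normalise to $\Lambda_2=1$, match the invariant of $\mathcal{S}$ with that of $k_2=\lambda k_1$, invoke \Cref{prop: sl2 acts transitively on QW}, realise the correspondence at the level of surfaces through \Cref{thm: new surface form old with sl2r}, and decompose via \Cref{thm: slr decomposition}. Your matching equation $(\lambda+1)^2/(\lambda-1)^2=\Lambda_1^2$ has exactly the two reciprocal roots $\lambda^{\pm}=(\Lambda_1\pm 1)/(\Lambda_1\mp 1)$ that appear in the paper's proof, and fixing the sign by ellipticity preservation yields what the paper's proof concludes, namely elliptic $\Rightarrow\lambda<0$ and hyperbolic $\Rightarrow\lambda>0$. (Incidentally, those signs are the reverse of the ones printed in the statement of \Cref{thm: slr on QW to pure lin}; that discrepancy is internal to the paper, between its statement and its own proof, and is not of your making.)

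The genuine gap is the borderline case $\Lambda_1^2=\Lambda_2$ (after normalisation, $\Lambda_1^2=1$), which is permitted by the hypothesis $\Lambda_2>0$ and occupies half of the paper's proof. There your pipeline does not merely need bookkeeping; it fails. The matching equation degenerates to $4\lambda=0$, so the only candidate target is $\lambda=0$, whose rotationally symmetric solutions are developable (planes, cones, cylinders). These are flat, so \Cref{thm: new surface form old with sl2r} and \Cref{prop: param of image under sl2r}, which hypothesise a non-flat input, cannot be run on them at all. Worse, the candidate $\lambda=0$ can be outright wrong: a round sphere of radius $R$ endowed with the border relation $k_1=1/R$ (so $\Lambda_1^2=\Lambda_2=1$) is not the $T_M$-image of any cone or cylinder, since a diagram $\{(r_1,\infty)\}$ is carried by $T_M$ to an arc of the horizontal line $\tilde{r}_2=a/c$, and forcing that arc to be the single umbilic point $\{(R,R)\}$ gives $ad-bc=0$, contradicting $\det M=1$ (a plane's diagram $\{(\infty,\infty)\}$ does land on an umbilic point, but planes are flat, so no induced surface map exists from them). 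The correct target for the sphere is $\lambda=1$, which your matching can never produce, because $k_2=k_1$ has $\Lambda_2=0$ and lies outside the normalised family on which the invariant is defined. This is precisely why the paper treats $\Lambda_1^2=1$ by a separate argument: \Cref{thm: parabolic QW are canal} classifies such $\mathcal{S}$ as subsets of spheres, tori, planes, cones or cylinders, and each type is then handled explicitly (the identity for spheres, planes, cones and cylinders; the element $a=1$, $d=-Rc$ relating a torus to a cone). Your closing paragraph treats flat loci as global bookkeeping within the transitivity argument, but in the border case the fix is not a refinement of that argument, it is the classification theorem. The remaining issues you flag, that $T_M(\mathfrak{R}(\mathcal{S}_0))$ may strictly contain $\mathfrak{R}(\mathcal{S})$ and that diagrams determine surfaces only up to the constant $\mathcal{A}$ and an axial translation, are real, but the paper's own proof glosses them at the same level of rigour.
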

\begin{proof}
Let $\mathcal{S}$ be as such with invariant $\Lambda_1^2$ and assume $\Lambda_2=1$. In the case that $\Lambda_1^2=1$, \Cref{thm: parabolic QW are canal} implies that $\mathcal{S}$ is a round sphere, torus of revolution, plane, cone or cylinder. Of these, only the round sphere is non-flat, in which case relationship (\ref{eq: pure k linear.}) is already satisfied for $\lambda=1$ so the required $\slr$ transformation can be taken to be the identity, hence the result is proven when $\Lambda_1^2=1$. Now assume $\Lambda_1^2 \neq 1$. By Proposition \ref{prop: sl2 acts transitively on QW} we can map the Weingarten relation of $\mathcal{S}$ to either of the relations
 \begin{equation}\label{eq: pure k lin in QW form.}
     \frac{1}{2}\left(\Lambda_1\pm 1\right)k_1+\frac{1}{2}\left(-\Lambda_1\pm 1\right)k_2=0,
 \end{equation}
with a $\slr$ transformation. 
The relations (\ref{eq: pure k lin in QW form.}) are equivalent to 
\begin{equation}\label{eq: pure k lin mu pm}
k_2=\lambda^{\pm}k_1, \qquad \lambda^\pm=\frac{\Lambda_1\pm 1}{\Lambda_1\mp 1}.
\end{equation}
Hence $\mathcal{S}$ can be mapped by a composition of parallel translations, a homothety and a reciprocal transformation to either of two surfaces of revolution satisfying the Weingarten relation (\ref{eq: pure k lin mu pm}) with $\lambda=\lambda^+$ or $\lambda=\lambda^-$. If $\mathcal{S}$ is hyperbolic so that $\Lambda_1^2>1$ then $\lambda^\pm>0$. On the other hand if $\mathcal{S}$ is elliptic then $\Lambda_1^2<1$ and $\lambda^\pm<0$, which is as expected since $\slr$ transformations preserve ellipticity.
\end{proof}
\vspace{0.1in}
\begin{remark}
Since $\lambda^+=1/\lambda^-$, both of the possible classes of target surfaces given in Theorem \ref{thm: slr on QW to pure lin} are reciprocal transformations of one another given by Proposition \ref{prop: reciprocal trm} with $\mathcal{A}=1$.
\end{remark}
\vspace{0.1in}
\noindent{\bf Statements and Declarations}:

The second author was supported by the Institute of Technology, Tralee / Munster Technological University Postgraduate Scholarship Programme. 
The authors have no other relevant financial or non-financial interests to disclose. No data was collected in the course of this research.
\vspace{0.2in}



\begin{thebibliography}{}
\bibitem{a62}
A. Alexandrov, Uniqueness theorems for surfaces in the large, V. Vestnik Leningrad University 11 (1956) 5–17; English translation in Amer. Math. Soc. Transl. Ser. 2, {\bf 21} (1962) 354-431 


\bibitem{Woodward2019}
J. Bolton and L. Woodward, A First Course in Differential Geometry: Surfaces in Euclidean Space, Cambridge University Press, 2019.

\bibitem{ci22}
P. Carretero and C. Ildefonso, {\it A new approach to rotational Weingarten surfaces}, Mathematics {\bf 10.4} (2022) 578.

\bibitem{et99}
R. Sa Earp and E. Toubiana, \textit{Classification des surfaces de type Delaunay}, Amer. J. Math. (1999) 671--700.

\bibitem{fm22}
I. Fernandez and P. Mira, {\it Elliptic Weingarten surfaces: singularities, rotational examples and the halfspace theorem}, Nonlinear Anal. {\bf 232} (2023) 113244.

\bibitem{Galvez22}
J.A. G\'alvez, P. Mira, and M. P. Tassi, {\it A quasiconformal Hopf soap bubble theorem}, Calc. Var. {\bf 61.4} (2022) 1--20.

\bibitem{gmm03}
J.A. G\'alvez, A. Martínez and F. Mil\'an. {\it Linear Weingarten surfaces in $\mathbb{R}^3$}, Monatsh. Math. {\bf 138.2} (2003) 133--144.

\bibitem{g24} 
B. Guilfoyle, {\it  A note on umbilic points at infinity}, Beitr\"age Algebra Geom. (2024) DOI: \url{https://doi.org/10.1007/s13366-024-00740-3}

\bibitem{gk04}
B. Guilfoyle and W. Klingenberg, \textit{On the space of oriented affine lines in $\mathbb {R}^ 3$}, Arch. Math. \textbf{82.1} (2004) 81--84.

\bibitem{gk05}
B. Guilfoyle and W. Klingenberg, \textit{An indefinite K\"ahler metric on the space of oriented lines}, J. London Math. Soc. \textbf{72.2} (2005) 497--509.

\bibitem{gk18}
B. Guilfoyle and W. Klingenberg, {\it Parabolic classical curvature flows}, J. Aust. Math. Soc. {\bf 104.3} (2018) 338--357.

\bibitem{gk21}
B. Guilfoyle and W. Klingenberg, \textit{Evolving to non-round Weingarten spheres: integer linear Hopf flows}, Partial Differ. Equ. Appl. \textbf{2.6} (2021) 1--26.

\bibitem{gk23}
B. Guilfoyle and W. Klingenberg, {\it Roots of polynomials and umbilics of surfaces}, Results in Math. {\bf 78} (2023) 229--247.

\bibitem{go23}
B. Guilfoyle and A. Ortiz-Rodr\'iguez, {\it Umbilic points on the finite and infinite parts of certain algebraic surfaces}, Math. Proc. R. Ir. Acad. {\bf 123A.2} (2023)  63--94.

\bibitem{gr22}
B. Guilfoyle and M. Robson, \textit{On convergence of non-Integer linear Hopf flow}, (2022) ArXiv preprint \url{https://arxiv.org/abs/2205.15978}.


\bibitem{hw54}
P. Hartman and W. Wintner \textit{Umbilical points and W-surfaces}, Am. J. Math {\bf 76} (1954) 502--508.

\bibitem{hm87}
D. Hoffman and H. Matisse, \textit{The computer-aided discovery of new embedded minimal surfaces},  Math. Intell. {\bf 9.3} (1987) 8--21.
\bibitem{Hopf1951}
H. Hopf, \textit{Über Flächen mit einer Relation zwischen den Hauptkrümmungen}, Math. Nachr. 4 (1951), 232–-249

\bibitem{Hopf1989a}
H. Hopf, Differential Geometry in the Large, Lecture Notes in Mathematics, Springer, Berlin, 1989.

\bibitem{Kisil12}
V. Kisil, Geometry of M\"obius Transformations: Elliptic, Parabolic and Hyperbolic Actions of $\slr$, World Scientific, 2012.

\bibitem{ks05}
W. K\"uhnel and M. Steller, \textit{On closed Weingarten surfaces}, Monatsh. Math. {\bf 146.2} (2005) 113--126.



\bibitem{lopez08}
R. L\'opez, \textit{On linear Weingarten surfaces}, Int. J. Math. {\bf 19.04} (2008) 439--448.

\bibitem{lopez08 hyp}
R. L\'opez, {\it Rotational linear Weingarten surfaces of hyperbolic type}, Israel J. Math. {\bf 167.1} (2008) 283--301.

\bibitem{pampano20}
R. L\'opez and Á. P\'ampano, {\it Classification of rotational surfaces in Euclidean space satisfying a linear relation between their principal curvatures}, Math. Nachrichten {\bf 293.4} (2020) 735--753.

\bibitem{lp20}
R. L\'opez and Á. P\'ampano, {\it Rotational surfaces of constant astigmatism in space forms}, J. Math. Anal. Appl {\bf 483.1} (2020) 123602.

\bibitem{p.et21a}
D. Pellis, M. Kilian, H. Pottmann and M. Pauly, {\it Computational design of Weingarten surfaces}, ACM Trans. Graphics (TOG), {\bf40.4} (2021) 1--11.

\bibitem{p.et21b}
D. Pellis, M. Kilian, H. Wang, C. Jiang, C. M\"uller, and H. Pottmann, {\it Architectural freeform surfaces designed for cost-effective paneling through mold re-use}, Advances in Architectural Geometry, (2021) 1--14.


\bibitem{st70}
K. Shiohama and R. Takagi, {\it A characterization of a standard torus in $\mathbb{E}^3$}, J. of Diff. Geom. { \bf 4} (1970) 477–-485.

\bibitem{t.et19}
X. Tellier, C. Douthe, L. Hauswirth and O. Baverel, {\it Linear Weingarten surfaces for conceptual design}, in Proceedings Int. Symposium on Conceptual Design of Structures, Madrid, (2019).

\bibitem{w61}
J. Weingarten, \textit{\"Uber eine Klasse auf einander abwickelbarer Fl\"achen}, J. Reine Angew. Math. {\bf59} (1861) 382--393.


\end{thebibliography}
\end{document}